\documentclass[a4paper,10pt]{article}
\usepackage[utf8]{inputenc}
\usepackage{amsfonts}

\usepackage{float}
\usepackage{amsmath, amsthm, amssymb}
% More fancy functionality for theorems
\usepackage{thmtools, thm-restate}

% For \coloneqq
\usepackage{mathtools}

% Grafiken (Formate LaTeX: eps; pdfLaTeX: pdf, jpg, png)
\usepackage{graphicx}

% for black board digits (\mathds{1})
\usepackage{dsfont}

\usepackage{hyperref}
\usepackage{amsthm}
\hypersetup{
    colorlinks=true,                          
    linkcolor=blue, % Couleur des liens internes
    citecolor=blue, % Couleur des numéros de la biblio dans le corps
    urlcolor=blue  }
    \usepackage{stackengine}

% \renewcommand{\baselinestretch}{1.5} 
% \makeatletter

\newcommand{\cP}{\mathcal{P}}

\newcommand{\R}{\mathbb{R}}
\newcommand{\mub}{\overline{\mu}}
\newcommand{\Vb}{\overline{V}}
\newcommand{\Zb}{\overline{Z}}
\newcommand{\muh}{\widehat{\mu}}
\newcommand{\sigh}{\widehat{\sigma}}
\newcommand{\sigb}{\overline{\sigma}}
\newcommand{\lambh}{\widehat{\lambda}}
\newcommand{\lambb}{\overline{\lambda}}

\newcommand{\gb}{\overline{g}}
\newcommand{\bh}{\widehat{b}}
\newcommand{\bb}{\overline{b}}
\newcommand{\un}{\mathds{1}}

\usepackage{mathtools}
\newtheorem{theorem}{Theorem}[section]
\newtheorem{lemma}[theorem]{Lemma}
\newtheorem{remark}[theorem]{Remark}
\newtheorem{corollary}[theorem]{Corollary}

\usepackage[margin=65pt]{geometry}
\usepackage[T1]{fontenc}
\usepackage{cases} 
\usepackage{xcolor}
\usepackage{hyperref}
\usepackage[ruled]{algorithm2e}

\title{Influence of sampling on the convergence rates of greedy algorithms for parameter-dependent random variables}            

\author{\scshape{Mohamed-Raed Blel, Virginie Ehrlacher,Tony Leli\`evre}}
\date{\today}

\begin{document}

\maketitle

\begin{abstract}
The main focus of this article is to provide a mathematical study of the algorithm proposed in~\cite{boyaval2010variance} where the authors proposed a variance reduction technique for the computation of parameter-dependent expectations using a reduced basis paradigm. 
We study the effect of Monte-Carlo sampling on the theoretical properties of greedy algorithms. 
In particular, using concentration inequalities for the empirical measure in Wasserstein distance proved in~\cite{fournier2015rate}, we provide sufficient conditions on the number of samples used for the computation of empirical variances at 
each iteration of the greedy procedure to guarantee that the resulting method algorithm is a weak greedy algorithm with high probability. These theoretical results are not fully practical 
and we therefore propose a heuristic procedure to choose the number of Monte-Carlo samples at each iteration, inspired 
from this theoretical study, which provides satisfactory results on several numerical test cases. 
\end{abstract}

\section{Introduction}

The aim of this article is to provide a mathematical study of the algorithm proposed in~\cite{boyaval2010variance} 
where the authors proposed a variance reduction technique for the computation of parameter-dependent expectations using a reduced basis paradigm. 

\medskip

More precisely, the problematic we are considering here is the following: let us denote by $\mathcal P \subset \mathbb{R}^m$ a set of parameter values. In several applications, it is of significant interest 
to be able to rapidly compute the expectation of a random variable of the form
$f_\mu(Z)$ for a large numbers of values of the parameter $\mu\in \mathcal P$, where $Z$ is a random vector and where for all $\mu\in \mathcal P$, $f_\mu$ is a real-valued function. In practice,
such expectations may not be computable analytically 
and are approximated using empirical means involving a large number of random samples of the random vector $Z$. Variance reduction methods are commonly used in 
such contexts in order to reduce the computational cost of approximating these expectations by means of standard Monte-Carlo algorithms. 
Among these, control variates, which are chosen as approximations of the random variable $f_\mu(Z)$ the expectation of which can be easily computed, can yield to interesting gains in terms of computational cost, 
provided that the variance of the difference between 
$f_\mu(Z)$ and its approximation is small. The construction of efficient control variates for a given application is thus fundamental for the variance reduction technique to yield significant computational gains.

\medskip

In~\cite{boyaval2010variance}, the authors proposed a general algorithm in order to construct a control variate for $f_\mu(Z)$ using a reduced basis paradim. More precisely, the approximation of $f_\mu(Z)$ is constructed 
as a linear combination of 
$f_{\mu_1}(Z), \cdots, f_{\mu_n}(Z)$ for some small integer 
$n\in \mathbb{N}^*$ and well-chosen values $\mu_1, \cdots, \mu_n \in \mathcal P$ of the parameters. The choice of $n$ and of the values of the parameters stems from an iterative procedure, called a 
greedy algorithm, which consists at iteration $n\in \mathbb{N}$ to compute
$$
\mu_{n+1} \in \mathop{\rm argmax}_{\mu \in \mathcal P} \mathop{\inf}_{Z_n \in V_n} {\rm Var}\left[ f_\mu(Z) - Z_n \right],
$$
where $V_n:= {\rm Span}\left\{ f_{\mu_1}(Z), \cdots, f_{\mu_n}(Z)\right\}$. In the ideal (unpractical) case where variances can be exactly 
computed, the procedure boils down to a standard greedy algorithm in a Hilbert space~\cite{devore2013greedy}. It is now well-known~\cite{devore2013greedy} that such a greedy procedure provides 
a quasi-optimal set of parameters $\mu_1, \cdots, \mu_n$ in the sense that the error 
$$
\mathop{\sup}_{\mu \in \mathcal P} \mathop{\inf}_{Z_n \in V_n} {\rm Var}\left[ f_\mu(Z) - Z_n \right] = \mathop{\inf}_{Z_n \in V_n} {\rm Var}\left[ f_{\mu_{n+1}}(Z) - Z_n \right]
$$
is comparable to the so-called Kolmogorov $n$-width of the set $\left\{ f_\mu(Z), \mu \in \mathcal P\right\}$, defined by 
$$
\mathop{\sup}_{\mu \in \mathcal P} \mathop{\inf}_{
\begin{array}{c}
 W_n \mbox{ vectorial subspace}\\
 {\rm dim} W_n  = n \\
\end{array}
} \mathop{\inf}_{Z_n \in W_n} {\rm Var}\left[ f_\mu(Z) - Z_n \right].
$$
In other words, the subspace $V_n$ is a quasi-optimal subspace of dimension $n$ for the approximation of random variables $f_\mu(Z)$ for $\mu \in \mathcal P$ in an $L^2$ norm sense.   

However, in practice, variances cannot be computed exactly and have to be approximated by empirical means involving a finite number of samples of the random vector $Z$, which may be different from one iteration of the greedy algorithm to another. 
The main result of this article is to give theoretical lower bounds on the number of samples which have to be taken at each iteration of the greedy algorithm in order 
to guarantee that the resulting Monte-Carlo greedy algorithm enjoys quasi-optimality properties close to those of an ideal greedy algorithm with high probability.

\medskip

The mathematical analysis of algorithms which combine randomness and greedy procedures is a quite recent and active field of research among the model-order reduction community. 
Let us mention here a few works in this direction in which different settings than the one we focus on here 
are considered. In~\cite{cohen2020reduced}, the authors consider the effect of randomly sampling the set of parameters in order to define random trial sets at each iteration 
of the greedy algorithm and prove that the obtained procedure enjoys remarkable approximation properties which remain 
very close to the approximation properties of a greedy algorithm where minimization problems at each iteration are defined over the whole set of parameters. 
In~\cite{smetana2019randomized,smetana2019randomized2,balabanov2019randomized,balabanov2019randomized2}, the authors propose randomized residual-based error estimators 
for parametrized equations, with a view to using them for the acceleration of greedy algorithms for reduced basis techniques. Let us finally mention that
significant research efforts are devoted by many different groups to the improvement of randomized algorithms for 
Singular Value Decompositions~\cite{chakraborty2017comparative}, which plays a fundamental role for model-order reduction. 

\medskip

The outline of the article is the following. In Section~\ref{sec:motivation}, we motivate the interest of greedy algorithms for the construction of control variates for variance reduction methods and recall some results 
of~\cite{buffa2012priori, binev2011convergence, devore2013greedy} on the mathematical analysis of greedy algorithms in Hilbert spaces. In Section~\ref{sec:sampling}, 
we present the Monte-Carlo greedy algorithm, which is the main focus of this article, our main theoretical result and its proof. This theoretical result does not yield a fully practical algorithm. To alleviate this difficulty, we propose in 
Section~\ref{sec:resnum} a heuristic algorithm, inspired from the theoretical result, which provides satisfactory results on several test cases.

\section{Motivation: greedy algorithms for reduced bases and variance reduction}\label{sec:motivation}

\subsection{Motivation: reduced basis control variate}\label{sec:motiv}

The aim of this section is to present the motivation of our work, which aims at constructing control variates for reducing the variance of a Monte-Carlo estimator of the mean of parameter-dependent functions of random vectors. 

\medskip

Let us begin by introducing some notation. Let $d\in \mathbb{N}^*$, $(\Omega, \mathcal{F}, \mathbb{P})$ be a probability space and $Z$ a $\mathbb{R}^d$-valued random vector with associated probability measure $\nu$. For all $q\in \mathbb{N}^*$, 
we denote by 
$$
L^q_{\nu}(\mathbb{R}^d):= \left\{ f: \R^d \to \R, \quad \int_{\R^d} |f(x)|^q\,d\nu(x) < +\infty\right\}. 
$$
Let $\mathcal C(\mathbb{R}^d)$ denote the set of continuous real-valued functions defined on $\mathbb{R}^d$. 
Let $p\in \mathbb{N}^*$, $\mathcal{P} \subset \mathbb{R}^p$ be a set of parameter values, and for all $\mu\in \mathcal P$, let $f_\mu$ be an element of $\mathcal C(\mathbb{R}^d) \cap L^2_{\nu}(\mathbb{R}^d)$.

\medskip

For all $f,g\in \mathcal C(\mathbb{R}^d)$, any $M\in \mathbb{N}^*$ and any collection $\overline{Z}:= (Z_k)_{1\leq k \leq M}$ of random vectors of $\mathbb{R}^d$, we define the empirical averages:
\begin{align*}
\mathbb{E}_{\overline{Z}}(f)&:=\frac{1}{M}\sum^M_{k=1} f\left(Z_k\right),\\
{\rm Cov}_{\overline{Z}}(f,g)& :=\mathbb{E}_{\overline{Z}}(fg)  - \mathbb{E}_{\overline{Z}}(f) \mathbb{E}_{\overline{Z}}(g), \\
{\rm Var}_{\overline{Z}}(f)& :={\rm Cov}_{\overline{Z}}(f,f). \\
\end{align*}

The aim of our work is to propose and analyse from a mathematical point of view a numerical method in order to efficiently construct control variates to reduce the variance of a Monte-Carlo estimator of $\mathbb{E}\left[f_\mu(Z)\right]$ 
for all $\mu \in \mathcal{P}$ using a Reduced Basis paradigm~\cite{hesthaven2016certified,barrault2004empirical,quarteroni2015reduced,devore2014theoretical}, which was originally proposed in~\cite{boyaval2010variance}. 

\medskip

More precisely, let $M_{\rm small},M_{\rm ref} \in \mathbb{N}^*$ and assume that $M_{\rm ref}\gg M_{\rm small}$. Let $\overline{Z}^{\rm ref}:= \left(Z^{\rm ref}_k\right)_{1\leq k \leq M_{\rm ref}}$ and 
$\overline{Z}^{\rm small}:=\left(Z^{\rm small}_k\right)_{1\leq k \leq M_{\rm small}}$ 
be two independent collections of iid random vectors distributed according to the law of $Z$ and \bfseries independent of $Z$\normalfont. 
\medskip
% For all $f,g \in \mathcal{C}(\R^d)$, let us denote by
% \begin{align*}
% \mathbb{E}_{M_{\rm ref}}(f)&:=\frac{1}{M_{\rm ref}}\sum^{M_{\rm ref}}_{k=1} f\left(Z^{M_{\rm ref}}_k\right),\\
% \mathbb{E}_{M}(f)& :=\frac{1}{M}\sum^{M}_{k=1} f\left(Z^{M}_k\right)\\
% {\rm Cov}_{M}(f,g)& :=\frac{1}{M}\sum^{M}_{k=1} f\left(Z^{M}_k\right) g\left(Z^{M}_k\right) - \mathbb{E}_{M}(f) \mathbb{E}_{M}(g), \\
% {\rm Var}_{M}(f)& :={\rm Cov}_{M}(f,f). \\
% \end{align*}

Let us assume that we have selected $N$ values of parameters $(\mu_1,\mu_2,...,\mu_N) \in \mathcal{P}^N$ for some $N\in \mathbb{N}^*$ and assume that the empirical means
$(\mathbb{E}_{\overline{Z}^{\rm ref}}(f_{\mu_i}))_{1\leq i\leq N}$ have been computed in an offline phase. 

In an online phase, for all $\mu \in \mathcal{P}$, we can build an approximation of $\mathbb{E}\left[f_\mu(Z)\right]$, using a control variate which reads as $\overline{f}_\mu(Z)$
for some function $\overline{f}_\mu: \mathbb{R}^d \to \mathbb{R}$:
            \begin{equation}\label{eq:controlvar}
\mathbb{E}\left[f_\mu(Z)\right] \approx \mathbb{E}_{\overline{Z}^{\rm ref}}(\overline{f}_\mu)+\mathbb{E}_{\overline{Z}^{\rm small}}\left(f_\mu-\overline{f}_\mu\right).
            \end{equation}
            
\begin{remark}\label{rem:rem1}
Let us point out that the statistical error between $\mathbb{E}_{\overline{Z}^{\rm ref}}(\overline{f}_\mu)$ and $\mathbb{E}\left[ \overline{f}_\mu(Z)\right]$ is close to 
$$
\sqrt{\frac{{\rm Var}\left[ \overline{f}_\mu(Z)\right]}{M_{\rm ref}}},
$$
whereas the error between $\mathbb{E}_{\overline{Z}^{\rm small}}\left(f_\mu -\overline{f}_\mu\right)$ and $\mathbb{E}\left[ (f_\mu - \overline{f}_\mu)(Z) \right]$ 
is of the order of 
$$\sqrt{\frac{{\rm Var}\left[ \left(f_\mu - \overline{f}_\mu\right)(Z)\right]}{M_{\rm small}}}.
$$
The aim of the Monte-Carlo greedy algorithm studied in this article is to give an approximation of 
$\mathbb{E}\left[f_\mu(Z)\right]$ with an error close to $\sqrt{\frac{{\rm Var}\left[f_\mu(Z)\right]}{M_{\rm ref}}}$ within a much smaller computational 
time than the one required by the computation of $\mathbb{E}_{\overline{Z}^{\rm ref}}(f_\mu)$.
\end{remark}

In the method studied here, the control variate function $\overline{f}_\mu$ is constructed as follows:
$$
\overline{f}_\mu =\sum_{i=1}^N \lambda^\mu_{i}f_{\mu_i}
$$
where $\lambda^\mu:=\left(\lambda^{\mu}_i\right)_{1\leq i \leq N}\in \R^N$ is a solution of the linear system 
\begin{equation}\label{eq:linsys}
A\lambda^\mu = b^{\mu}
\end{equation}
where $A:=\left( A_{ij} \right)_{1\leq i,j \leq N} \in \R^{N\times N}$ and $b^{\mu}:= \left( b^{\mu}_i\right)_{1\leq i \leq N} \in \R^N$ are defined as follows: for all $1\leq i,j \leq N$, 
 \begin{equation}\label{eq:defMMS}
A_{ij} = \mathrm{Cov}_{\overline{Z}^{\rm small}}(f_{\mu_i}, f_{\mu_j}) \quad \mbox{ and } \quad b^{\mu}_{i} = \mathrm{Cov}_{\overline{Z}^{\rm small}}(f_\mu, f_{\mu_i}).
\end{equation}
Equivalently, the vector $\lambda^\mu$ is a solution of the minimization problem
$$
\lambda^\mu \in \mathop{\rm argmin}_{\lambda:=(\lambda_i)_{1\leq i \leq N}\in \R^N} {\rm Var}_{\overline{Z}^{\rm small}}\left( f_\mu - \sum_{i=1}^N \lambda_i f_{\mu_i}\right).
$$
Let us point out that $\lambda^\mu$ is a random vector which can be written as a deterministic function of $\overline{Z}^{\rm small}$. In other words, $\lambda^\mu$ is measurable with respect to $\overline{Z}^{\rm small}$. 
Remarking that $ \mathbb{E}_{\overline{Z}^{\rm ref}}(\overline{f}_\mu)= \sum_{i=1}^N \lambda_i^\mu \mathbb{E}_{\overline{Z}^{\rm ref}}(f_{\mu_i})$, the computation of the approximation (\ref{eq:controlvar}) of 
$\mathbb{E}\left[ f_\mu(Z)\right]$ thus 
requires the following steps: 
\begin{itemize}
 \item \bfseries offline phase: \normalfont Compute $(\mathbb{E}_{\overline{Z}^{\rm ref}}(f_{\mu_i}))_{1\leq i\leq N}$ ($N$ empirical means with
 $M_{\rm ref}$ samples), $(\mathbb{E}_{\overline{Z}^{\rm small}}(f_{\mu_i}))_{1\leq i\leq N}$ 
 ($N$ empirical means with 
 $M_{\rm small}$ samples) and the matrix $A$ ($N^2$ empirical covariances with $M_{\rm small}$ samples). 
 \item \bfseries online phase: \normalfont For all $\mu \in \mathcal P$, compute $b^\mu$ ($N$ empirical covariances with $M_{\rm small}$ samples) and solve the linear system (\ref{eq:linsys}) to obtain $\lambda^\mu$.
 Then, compute the approximation (\ref{eq:controlvar}) 
 of $\mathbb{E}\left[f_\mu(Z)\right]$ as
 \begin{equation}\label{eq:approx}
 \mathbb{E}\left[f_\mu(Z)\right] \approx  \sum_{i=1}^N \lambda_i^\mu \mathbb{E}_{\overline{Z}^{\rm ref}}(f_{\mu_i}) + \mathbb{E}_{\overline{Z}^{\rm small}}\left(f_\mu\right)-\sum_{i=1}^N \lambda_i^\mu \mathbb{E}_{\overline{Z}^{\rm small}}
 \left(f_{\mu_i}\right),
 \end{equation}
\end{itemize}
which requires $\mathcal{O}(N)$ elementary operations and the computation of one empirical mean with $M_{\rm small}$ samples.
  
\medskip
 
Naturally, the approximation of $\mathbb{E}\left[f_\mu(Z)\right]$ given by (\ref{eq:controlvar}) can be interesting from a computational point of view in terms of variance reduction only if 
$\mbox{\rm Var}\left[ f_\mu(Z) - \overline{f}_\mu(Z) \right]$ is much smaller than $\mbox{\rm Var}\left[ f_\mu(Z)\right]$. The following question thus naturally arises: how can the set of parameters
$(\mu_1,\mu_2,..,\mu_N) \in \mathcal{P}^N$ be chosen in the offline phase in order to ensure that $\mbox{\rm Var}\left[ f_\mu(Z) - \overline{f}_\mu(Z) \right]$ is as small as possible for any value of $\mu \in \mathcal P$? 

\medskip

Greedy algorithms stand as the state-of-the-art technique to construct such sets of snapshot parameters, enjoy very nice mathematical properties and are the backbone of the method proposed in~\cite{boyaval2010variance}
which we wish to analyze here. We present this family of algorithms and related existing theoretical convergence results in the next section.

\subsection{Greedy algorithms for reduced basis}

Let us recall here the results of~\cite{buffa2012priori, binev2011convergence, devore2013greedy} on the convergence rates of greedy algorithms for reduced bases, adapted to our context. Let us define
$$
L^2_{\nu,0}(\R^d):=\left\{ g \in L^2_\nu(\R^d), \; \int_{\R^d} g \,d\nu = 0 \right\}. 
$$
It holds that $L^2_{\nu,0}(\R^d)$ is a Hilbert space, equipped with the scalar product $\langle \cdot, \cdot \rangle$ defined by 
$$
\forall g_1,g_2\in L^2_{\nu,0}(\R^d), \quad \langle g_1, g_2\rangle = \int_{\R^d} g_1g_2\,d\nu = {\rm Cov}\left[ g_1(Z), g_2(Z)\right].
$$
The associated norm is denoted by $\|\cdot\|$ and is given by
$$
\forall g \in L^2_{\nu,0}(\R^d), \quad \|g\| = \left(\int_{\R^d} |g|^2\,d\nu\right)^{1/2} = \sqrt{{\rm Var}\left[g(Z)\right]}.
$$

\medskip

For all $\mu \in \mathcal P$, let us define 
\begin{equation}\label{eq:defgmu}
g_\mu:= f_\mu - \mathbb{E}\left[ f_\mu(Z)\right]
\end{equation}
and let us denote by 
\begin{equation}\label{eq:defM}
\mathcal M:= \left\{ g_\mu, \; \mu \in \mathcal P\right\}
\end{equation}
so that $\mathcal M \subset L^2_{\nu,0}(\R^d)$. Let us assume that $\mathcal M$ is a compact subset of $L^2_{\nu,0}(\R^d)$. For all $n\in \mathbb{N}^*$, we introduce the Kolmogorov $n$-width of the set $\mathcal M$ in $L^2_{\nu,0}(\R^d)$, defined by
\begin{align*}
d_n(\mathcal M) & := \mathop{\inf}_{
\begin{array}{c}
 V_n \subset L^2_{\nu,0}(\R^d) \; {\rm subspace},\\
 {\rm dim}\; V_n = n\\
\end{array}
}
\mathop{\sup}_{\mu \in \mathcal P}
\mathop{\inf}_{g_n\in V_n} \sqrt{{\rm Var}\left[ g_\mu(Z) - g_n(Z)\right]}\\
& = \mathop{\inf}_{
\begin{array}{c}
 V_n \subset L^2_{\nu,0}(\R^d) \; {\rm subspace},\\
 {\rm dim}\; V_n = n\\
\end{array}
}
\mathop{\sup}_{\mu \in \mathcal P}
\mathop{\inf}_{g_n\in V_n} \left\| g_\mu - g_n\right\|.\\
\end{align*}
%  & = \mathop{\inf}_{
% \begin{array}{c}
%  V_n \subset L^2_{\nu,0}(\R^d) \; {\rm subspace},\\
%  {\rm dim}\; V_n = n\\
% \end{array}
% }
% \mathop{\sup}_{\mu \in \mathcal P}
% \mathop{\inf}_{g_n\in V_n} \sqrt{{\rm Var}\left[ g_\mu(Z) - g_n(Z)\left|\Zb^{1:\infty}\right.\right]}.\\
% \end{align*}

Let $0< \gamma < 1$ and consider the following \itshape weak greedy algorithm \normalfont with parameter $\gamma$.

\vspace{0.5cm}

\noindent

\bfseries \underline{Weak-Greedy Algorithm}: \normalfont

\medskip

\bfseries Initialization: \normalfont Find $\mu_1\in \mathcal P$ such that
\begin{equation}\label{eq:init}
\left\|g_{\mu_1}\right\|^2 \geq \gamma^2 \mathop{\max}_{\mu \in \mathcal P} \left\|g_{\mu}\right\|^2.
\end{equation}
Set $V_1:= \mbox{\rm Span}\{ g_{\mu_1}\}$ and set $n=2$.
 
\bfseries Iteration $n\geq 2$: \normalfont Find $\mu_n\in \mathcal P$ such that
\begin{equation}\label{eq:iter}
\mathop{\inf}_{(\lambda_i)_{1\leq i \leq n-1} \in \R^{n-1}} \left\|g_{\mu_n} - \sum_{i=1}^{n-1} \lambda_i g_{\mu_i}\right\|^2 \geq \gamma^2 \mathop{\max}_{\mu \in \mathcal P} \mathop{\inf}_{(\lambda_i)_{1\leq i \leq n-1} \in 
\R^{n-1}} \left\|g_\mu- \sum_{i=1}^{n-1} \lambda_i g_{\mu_i}\right\|^2,
\end{equation}
Set $V_n:= V_{n-1} + \mbox{\rm Span}\{ g_{\mu_n}\} = \mbox{\rm Span}\{ g_{\mu_1},\cdots, g_{\mu_n}\} $. 

\vspace{0.5cm}

For all $n\in \mathbb{N}^*$, the error associated with the $n$-dimensional subspace $V_n$ given by the weak greedy algorithm is defined by
$$
\sigma_n(\mathcal M):= \mathop{\max}_{\mu \in \mathcal P} \mathop{\inf}_{(\lambda_i)_{1\leq i \leq n} \in \R^{n}}  \left\|g_\mu - \sum_{i=1}^{n} \lambda_i g_{\mu_i}\right\|.
$$

The following result is then a direct corollary of the results proved in~\cite[Corollary~3.3]{devore2013greedy}.
\begin{theorem}\label{cor:weakgreed}
 For all $n\in \mathbb{N}^*$, $\displaystyle \sigma_n\left(\mathcal{M}\right)\leq \sqrt{2} \gamma^{-1} \mathop{\min}_{0\leq m <n} \left(d_m(\mathcal M)\right)^{\frac{n-m}{n}}$. In particular, for all $n\in \mathbb{N}^*$, 
 $\sigma_{2n}\left(\mathcal{M}\right) \leq \sqrt{2} \gamma^{-1} \sqrt{d_n(\mathcal{M})}$.
\end{theorem}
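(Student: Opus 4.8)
The plan is to obtain the statement as a direct transcription of the general Hilbert-space result~\cite[Corollary~3.3]{devore2013greedy}, so the bulk of the work is to check that our setting is a special instance of theirs and to match notation. First I would recall that $H := L^2_{\nu,0}(\R^d)$ is a Hilbert space for the inner product $\langle g_1,g_2\rangle = \int_{\R^d} g_1 g_2 \, d\nu$, and that $\mathcal{M} = \{g_\mu : \mu\in\mathcal P\}$ is, by assumption, a compact subset of $H$. These are exactly the standing hypotheses under which the abstract weak greedy algorithm and its convergence analysis are carried out in~\cite{devore2013greedy,binev2011convergence}.

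Next I would verify that the algorithm described by~\eqref{eq:init}--\eqref{eq:iter} is the weak greedy algorithm of~\cite{devore2013greedy} with the same parameter $\gamma$. For a finite-dimensional subspace $V\subset H$, the quantity $\inf_{(\lambda_i)} \|g - \sum_i\lambda_i g_{\mu_i}\| = \mathrm{dist}(g, V)$ is the distance to $V$ realised by the orthogonal projection; hence the right-hand sides of~\eqref{eq:init} and~\eqref{eq:iter} equal $\gamma^2\sigma_0(\mathcal M)^2$ and $\gamma^2\sigma_{n-1}(\mathcal M)^2$ respectively, with the convention $V_0 = \{0\}$ and $\sigma_0(\mathcal M)=\max_\mu\|g_\mu\|$. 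Taking square roots, the selection criteria read $\mathrm{dist}(g_{\mu_n}, V_{n-1}) \ge \gamma\,\sigma_{n-1}(\mathcal M)$, which is precisely the weak greedy condition; the squared formulation used here is equivalent and introduces no change in the constant $\gamma$. Likewise $\sigma_n(\mathcal M)$ and $d_n(\mathcal M)$ coincide with the quantities denoted $\sigma_n$ and $d_n$ in~\cite{devore2013greedy}.

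With these identifications, \cite[Corollary~3.3]{devore2013greedy} applies verbatim and yields, for every $n\in\mathbb N^*$, the bound $\sigma_n(\mathcal M) \le \sqrt 2\,\gamma^{-1}\min_{0\le m<n}(d_m(\mathcal M))^{(n-m)/n}$, which is the first assertion. The second assertion follows by specialisation: applying the first bound with $2n$ in place of $n$ and retaining only the term $m=n$ in the minimum gives
$$\sigma_{2n}(\mathcal M) \le \sqrt 2\,\gamma^{-1}(d_n(\mathcal M))^{(2n-n)/(2n)} = \sqrt 2\,\gamma^{-1}\sqrt{d_n(\mathcal M)}.$$

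Since the result is a corollary of a published theorem, I do not expect a genuine obstacle in the argument itself; the only points requiring care are bookkeeping ones. The main thing to get right is the faithful matching of hypotheses and notation — in particular that compactness of $\mathcal M$ is what guarantees the maxima in~\eqref{eq:init}--\eqref{eq:iter} are attained and that $\sigma_n(\mathcal M)$ is well defined — together with the correct handling of the squared versus unsquared norms, so that no spurious factor of $\gamma$ is introduced. If one preferred a self-contained argument rather than a citation, the core of~\cite{devore2013greedy} is a determinant/volume comparison: one Gram--Schmidt orthonormalises the greedy selections $g_{\mu_1},g_{\mu_2},\ldots$, records the inner products in a lower-triangular matrix whose diagonal entries are bounded below by $\gamma\,\sigma_i(\mathcal M)$, and bounds the volume of the associated parallelepiped from above in terms of $d_m(\mathcal M)$ using a near-optimal $m$-dimensional subspace; comparing the two estimates produces the product inequality from which the stated bound follows. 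Reproducing this would be the only substantial step, but it is unnecessary here.
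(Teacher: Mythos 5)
Your proof is correct and takes essentially the same approach as the paper: the paper gives no separate argument for this theorem, presenting it exactly as you do, namely as a direct application of \cite[Corollary~3.3]{devore2013greedy} to the Hilbert space $L^2_{\nu,0}(\R^d)$ with the compact set $\mathcal M$. Your additional bookkeeping (matching the squared selection criteria to the weak greedy condition, and taking $m=n$ with $2n$ in place of $n$ to get $\sigma_{2n}(\mathcal M)\leq \sqrt{2}\gamma^{-1}\sqrt{d_n(\mathcal M)}$) simply makes explicit what the paper leaves implicit.
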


This result indicates that the weak greedy algorithm provides a practical way to construct a quasi-optimal sequence $(V_n)_{n\in \mathbb{N}^*}$ of finite dimensional subspaces of $L^2_{\nu, 0}(\R^d)$. 

\medskip

Of course, the weak greedy algorithm introduced above cannot be implemented in practice since it requires at the $n^{\rm th}$ iteration of the algorithm 
the computation of the exact variances of $g_{\mu}(Z) - \sum_{i=1}^{n-1} \lambda_i g_{\mu_i}(Z)$ for $\mu,\mu_1,\cdots,\mu_{n-1}\in \mathcal{P}$ and $\lambda_1,\cdots, \lambda_{n-1}\in \R$, which is out of reach in our context. 
In practice, these quantities have to be approximated by Monte-Carlo estimators involving a finite number of samples of the random vector $Z$. The resulting greedy algorithm with Monte Carlo sampling 
is presented in Section~\ref{sec:sampling}. The mathematical analysis of this algorithm is the main purpose of the present article. 

\medskip

For the sake of simplicity, in the rest of the article, we assume that for all $n\in \mathbb{N}^*$, $d_n(\mathcal M) >0$.

\section{Greedy algorithm with Monte-Carlo sampling}\label{sec:sampling}

\subsection{Presentation of the algorithm}\label{sec:presentation}

Let us begin by presenting the greedy algorithm with Monte Carlo sampling. 

\medskip

Let $(M_n)_{n\in \mathbb{N}^*}$ be a sequence of integers,  which represents the number of samples used at iteration $n$.
For all $n\in \mathbb{N}^*$, let $\Zb^n:=(Z_k^n)_{1\leq k \leq M_n}$ be a collection of random vectors such that 
$(Z_k^n)_{n\geq 1, \; 1\leq k \leq M_n}$ are independent and identically distributed according to the law of $Z$, and independent of $Z$. 
Let $\Zb^{1:n}:= \left( \Zb^m\right)_{1\leq m \leq n}$ and $\Zb^{1:\infty}:= \left( \Zb^n\right)_{n\in \mathbb{N}^*}$. 

\medskip

For any random functions $g_1$, $g_2$ with values in $L^2_{\nu,0}(\R^d)$, we define
$$
 \langle g_1, g_2\rangle_{\overline{Z}^{1:\infty}} := {\rm Cov}\left[ g_1(Z), g_2(Z)\left| \overline{Z}^{1:\infty} \right. \right] \quad \mbox{ and } \quad \|g_1\|_{\overline{Z}^{1:\infty}} :=\sqrt{{\rm Var}\left[g_1(Z)\left| \overline{Z}^{1:\infty} \right.\right]}.
$$

Let us make here an important remark. Since $\Zb^{1:\infty}$ is a collection of random vectors which are all independent of $Z$, it holds that, for all $f,g \in L^2_{\nu,0}(\R^d)$, almost surely, 
\begin{align*}
\langle f, g\rangle_{\overline{Z}^{1:\infty}} & =  {\rm Cov}\left[ f(Z), g(Z)\left|\Zb^{1:\infty}\right.\right] = {\rm Cov}\left[ f(Z), g(Z)\right] = \langle f,g\rangle,\\
\|g\|^2_{\overline{Z}^{1:\infty}} & = {\rm Var}\left[g(Z)\left|\Zb^{1:\infty}\right.\right] = {\rm Var}[g(Z)] = \|g\|^2.\\  
\end{align*}
Hence, almost surely, $\langle \cdot, \cdot \rangle_{\Zb^{1:\infty}}$ defines a scalar product on $L^2_{\nu,0}(\R^d)$, which is a Hilbert space when equipped with this scalar product, 
and $\|\cdot\|_{\Zb^{1:\infty}}$ is the associated norm. 

\bigskip

The greedy algorithm with Monte-Carlo sampling reads as follows:

\vspace{0.5cm}

\noindent

\bfseries \underline{MC-Greedy Algorithm}: \normalfont

\medskip

\bfseries Initialization: \normalfont Find $\mub_1\in \mathcal P$ such that, almost surely,
\begin{equation}\label{eq:init}
\mub_1 \in \mathop{\rm argmax}_{\mu \in \mathcal P} {\rm Var}_{\overline{Z}^1}\left( g_{\mu}\right) \quad \mbox{ and } \quad g_{\mub_1} \neq 0. 
\end{equation}
Set $\Vb_1:= \mbox{\rm Span}\{ g_{\mub_1}\} $ and set $n=2$.
 
\bfseries Iteration $n\geq 2$: \normalfont Find $\mub_n\in \mathcal P$ such that, almost surely,
\begin{equation}\label{eq:iter}
\mub_n \in \mathop{\rm argmax}_{\mu \in \mathcal P} \mathop{\inf}_{(\lambda_i)_{1\leq i \leq n-1} \in \R^{n-1}}  {\rm Var}_{\overline{Z}^n}\left(g_\mu - \sum_{i=1}^{n-1} \lambda_i g_{\mub_i}\right) 
\quad \mbox{ and } \quad g_{\mub_n} \notin \Vb_{n-1}. 
\end{equation}
Set $\Vb_n:= \Vb_{n-1} + \mbox{\rm Span}\{ g_{\mub_n}\} = \mbox{\rm Span}\{ g_{\mub_1},\cdots, g_{\mub_n}\}$. 

\vspace{0.5cm}
Naturally, for all $n\in \mathbb{N}^*$, the parameter $\mub_n$ and thus the finite-dimensional space $\Vb_n$ are $\Zb^{1:n}$-measurable. 

\medskip

Let us first prove an auxiliary lemma. 

\begin{lemma}\label{lem:def}
 Almost surely, all the iterations of the MC-Greedy Algorithm are well-defined, in the sense that, for all $n\in \mathbb{N}^*$, there always exists at least one element $\overline{\mu}_n \in \mathcal P$ such that 
 (\ref{eq:init}) (when $n=1$) or (\ref{eq:iter}) (when $n\ge 2$) is satisfied. 
\end{lemma}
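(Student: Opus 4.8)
The plan is to argue by induction on $n$, showing that almost surely each iteration admits an admissible parameter. Throughout I would work on the almost sure event that every sample $Z^n_k$ lies in $\mathrm{supp}(\nu)$; on this event, if $h\in L^2_{\nu,0}(\R^d)$ is the zero element (i.e. $h=0$ $\nu$-a.e.), then, $h$ being continuous, the open set $\{h\neq 0\}$ is $\nu$-null, hence disjoint from $\mathrm{supp}(\nu)$, so $h$ vanishes at every sample and ${\rm Var}_{\Zb^n}(h)=0$. Two reductions are immediate. First, the inner problem $\inf_{\lambda\in\R^{n-1}}{\rm Var}_{\Zb^n}(g_\mu-\sum_i\lambda_i g_{\mub_i})$ is the minimisation of a nonnegative convex quadratic form in $\lambda$, so its infimum is attained and equals the squared $\|\cdot\|_{\Zb^n}$-distance from $g_\mu$ to $\Vb_{n-1}$. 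Second, this objective depends on $g_\mu$ only through the evaluation vector $E_n(g_\mu):=(g_\mu(Z^n_k))_{1\le k\le M_n}\in\R^{M_n}$, so that $\mu\mapsto\inf_\lambda{\rm Var}_{\Zb^n}(g_\mu-\sum_i\lambda_i g_{\mub_i})$ equals $\psi_n(E_n(g_\mu))$ for a fixed continuous function $\psi_n:\R^{M_n}\to\R_{\ge0}$.

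The heart of the argument, and the main obstacle, is the attainment of the outer supremum. Since the empirical functional involves pointwise evaluations at the samples, it is \emph{not} continuous for the $L^2_\nu$-topology, so the compactness of $\mathcal{M}$ in $L^2_{\nu,0}(\R^d)$ cannot be invoked directly against a maximising sequence. Instead I would exploit continuity in the parameter: for each fixed realisation of the samples, $\mu\mapsto E_n(g_\mu)$ is continuous (each $f_\mu$ is continuous and $\mu\mapsto f_\mu$ is continuous for the relevant topology), whence $\mu\mapsto\psi_n(E_n(g_\mu))$ is continuous on the compact parameter set and therefore attains its maximum at some $\mub_n$. This is precisely the delicate point: one must reconcile the $L^2$-compactness (used below through the Kolmogorov widths) with the pointwise continuity in $\mu$ that controls the empirical objective; compactness of $\mathcal{M}$ in $L^2$ alone does not suffice.

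It remains to arrange the side condition, which is where the hypothesis $d_k(\mathcal{M})>0$ enters. Assume inductively that $g_{\mub_1},\dots,g_{\mub_{n-1}}$ have been chosen with $g_{\mub_i}\notin\Vb_{i-1}$, so that $\dim\Vb_{n-1}=n-1$. On the almost sure event above, if $g_\mu\in\Vb_{n-1}$ then $g_\mu-\sum_i\lambda_i^\ast g_{\mub_i}=0$ in $L^2_\nu$ for the representing coefficients, hence the empirical objective vanishes; thus the objective equals $0$ on $\{\mu:\,g_\mu\in\Vb_{n-1}\}$. I would then distinguish two cases. If the maximal value is strictly positive, any maximiser $\mub_n$ has positive objective and therefore satisfies $g_{\mub_n}\notin\Vb_{n-1}$, as required. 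If the maximal value is $0$, the objective is identically zero and every $\mu$ is a maximiser, so it suffices to exhibit one $\mu$ with $g_\mu\notin\Vb_{n-1}$, which exists because $d_{n-1}(\mathcal{M})>0$ forces $\mathcal{M}\not\subset\Vb_{n-1}$. Either way one obtains an admissible $\mub_n$ with $\dim\Vb_n=n$, closing the induction. The initialisation $n=1$ is the same argument with $\Vb_0=\{0\}$: the objective ${\rm Var}_{\Zb^1}(g_\mu)$ vanishes wherever $g_\mu=0$ in $L^2_\nu$, and $d_1(\mathcal{M})>0$ guarantees $\mathcal{M}\neq\{0\}$, so a maximiser with $g_{\mub_1}\neq 0$ exists.
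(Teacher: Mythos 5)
Your treatment of the side conditions coincides with the paper's own proof. The paper proceeds by exactly the dichotomy you describe: either the empirical maximum is strictly positive, in which case continuity of the $g_\mu$'s together with the fact that the samples lie almost surely in $\mathrm{supp}(\nu)$ shows that any maximizer satisfies the extra requirement ($g_{\mub_1}\neq 0$, resp.\ $g_{\mub_n}\notin\Vb_{n-1}$) --- the paper states this in the contrapositive form, positive empirical variance forces positive conditional variance, while you state it as: the zero class in $L^2_{\nu,0}(\R^d)$ has zero empirical variance; these are the same use of continuity plus the support of $\nu$ --- or else the empirical objective vanishes identically, in which case every $\mu$ is a maximizer and the standing assumption $d_n(\mathcal M)>0$ supplies one with $g_\mu\notin\Vb_{n-1}$ (resp.\ $g_{\mu}\neq 0$). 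The paper writes this out only for $n=1$ and declares the case $n\geq 2$ ``similar''; your induction, together with the observation that the inner infimum is an attained quadratic minimization depending only on the evaluation vectors at the samples, fills in that part correctly.

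Where you genuinely depart from the paper is the attainment of the outer supremum over $\mu\in\mathcal P$. The paper is silent on this point: its proof simply presupposes that the argmax is nonempty. You rightly observe that compactness of $\mathcal M$ in $L^2_{\nu,0}(\R^d)$ does not help directly, since the empirical objective involves point evaluations and is not $L^2_\nu$-continuous. However, your fix invokes compactness of the parameter set $\mathcal P$ and continuity of $\mu\mapsto f_\mu$ in a topology controlling point evaluations, and neither of these is a hypothesis of the paper (nothing is assumed about the topology of $\mathcal P$; only its image $\mathcal M$ in $L^2_{\nu,0}(\R^d)$ is assumed compact). A resolution consistent with the paper's framework would instead go through the equi-Lipschitz and uniform-boundedness assumptions (B2)--(B3) stated just after the lemma: a maximizing sequence in $\mathcal M$ admits, by Arzel\`a--Ascoli and diagonal extraction, a subsequence converging locally uniformly; combined with $L^2_\nu$-compactness, the locally uniform limit agrees on $\mathrm{supp}(\nu)$ with some $g_{\mu^*}\in\mathcal M$, and since the objective is a continuous function of the sample evaluations, $\mu^*$ is a maximizer. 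In short: the substance of your proof matches the paper's, the attainment issue you flag is real (the paper glosses over it), but your resolution of it relies on assumptions the paper does not grant.
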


\begin{proof}[Proof of Lemma~\ref{lem:def}]
Let us first consider the initialization step corresponding to $n=1$. Two situations may a priori occur : 
either $\displaystyle \mathop{\max}_{\mu \in \mathcal P} {\rm Var}_{\overline{Z}^1}\left( g_{\mu}\right) >0$ or 
$\displaystyle \mathop{\max}_{\mu \in \mathcal P} {\rm Var}_{\overline{Z}^1}\left( g_{\mu}\right) = 0$. In the first case, choosing $\displaystyle 
\mub_1 \in \mathop{\rm argmax}_{\mu \in \mathcal P} {\rm Var}_{\overline{Z}^1}\left( g_{\mu}\right)$ is sufficient to guarantee that $g_{\mub_1} \neq 0$. Indeed, 
since $\mathcal M \subset \mathcal C(\mathbb{R}^d)$ (remember that $f_\mu$ is continuous for all $\mu \in \mathcal P$, and hence so is $g_\mu$), the fact that 
${\rm Var}_{\overline{Z}^1}\left( g_{\mub_1}\right) >0$ necessarily implies that ${\rm Var}\left[ g_{\mub_1}(Z)\left| \overline{Z}^{1:\infty}\right. \right] >0$ almost surely. 
Since  $\overline{Z}^{1:\infty}$ is independent of $Z$ and $\mub_1$ is a $\overline{Z}^{1:\infty}$ measurable random variable, this implies that almost surely $g_{\mub_1}\neq 0$.

\medskip

In the second case, it then holds that $\displaystyle {\rm Var}_{\overline{Z}^1}\left( g_{\mu}\right) = 0$ for all $\mu \in \mathcal P$. Then, 
the assumption $d_1(\mathcal M)>0$ implies that, almost surely, there exists at least one element $\mub_1 \in \mathcal P$ such that 
$g_{\mub_1} \neq 0$. In addition, $\mub_1 \in \mathop{\rm argmax}_{\mu \in \mathcal P} {\rm Var}_{\overline{Z}^1}\left( g_{\mu}\right)$.

\medskip

Using similar arguments and the fact that $d_n(\mathcal M)>0$ for all $n\in \mathbb{N}^*$, it is easy to see that, almost surely, all the iterations of the MC-Greedy algorithm are well-defined, 
in particular for $n\geq 2$. 
\end{proof}

\begin{remark}
We stress on the fact that the practical implementation of the MC-greedy algorithm does not require the knowledge of the value of $\mathbb{E}[f_\mu(Z)]$, even if $g_\mu= f_\mu - \mathbb{E}[f_\mu(Z)]$ 
for all $\mu \in \mathcal P$. Indeed, it holds that for all $g\in \mathcal{C}(\R^d)$, all $n\in\mathbb{N}^*$ and all $C\in \R$, ${\rm Var}_{\overline{Z}^n}\left( g\right)  = {\rm Var}_{\overline{Z}^n}\left( g + C\right)$. 
Thus, for all $\mu \in \mathcal{P}$, $n\in \mathbb{N}^*$ and $\lambda:=\left( \lambda_i\right)_{1\leq i \leq n-1} \in \R^{n-1}$, 
$$
{\rm Var}_{\overline{Z}^1}\left( g_{\mu}\right) = {\rm Var}_{\overline{Z}^1}\left( f_{\mu}\right) \quad \mbox{ and } \quad {\rm Var}_{\overline{Z}^n}\left(g_\mu - \sum_{i=1}^{n-1} \lambda_i g_{\mub_i}\right) = {\rm Var}_{\overline{Z}^n}\left(f_\mu - \sum_{i=1}^{n-1} \lambda_i f_{\mub_i}\right).
$$
Thus, the MC-greedy algorithm naturally makes sense with a view to the construction of a reduced basis control variate for variance reduction as explained in Section~\ref{sec:motiv}.
% 
% \medskip
% 
% Naturally, in practice, the MC-greedy algorithm presented above should be stopped as soon as $M_n$ becomes greater than the value of $M_s$ used for the control variate method to be of interest.
\end{remark}

\begin{remark}
In practice, a discrete subset $\mathcal P_{\rm trial} \subset \mathcal P$ has to be introduced. The optimization problems (\ref{eq:init}) and (\ref{eq:iter}) have to be replaced respectively by
$$
\mub_1 \in \mathop{\rm argmax}_{\mu \in \mathcal P_{\rm trial}} {\rm Var}_{\overline{Z}^1}\left( g_{\mu}\right) \quad \mbox{ and } \quad g_{\mub_1} \neq 0,
$$
and 
$$
\mub_n \in \mathop{\rm argmax}_{\mu \in \mathcal P_{\rm trial}} \mathop{\inf}_{(\lambda_i)_{1\leq i \leq n-1} \in \R^{n-1}}  {\rm Var}_{\overline{Z}^n}\left(g_\mu - \sum_{i=1}^{n-1} \lambda_i g_{\mub_i}\right) \quad \mbox{ and } \quad g_{\mub_n} \notin \Vb_{n-1}. 
$$
The influence of the choice of the set $\mathcal P_{\rm trial}$ on the mathematical properties of the MC-greedy algorithm is an important question which we do not address in our analysis for the sake of simplicity. 
For related discussion, we refer the reader 
to the work~\cite{cohen2020reduced}, where the authors study the mathematical properties of a greedy algorithm where the set $\mathcal P_{\rm trial}$ depends on the iteration $n$ of the greedy algorithm and is randomly chosen according to 
appropriate probability distributions defined on the set of parameters $\mathcal P$. 
 \end{remark}

For all $n\in \mathbb{N}^*$, we also define
\begin{align} \label{eq:sigh}
 \sigh_{n-1}(\mathcal M)& :=\mathop{\max}_{\mu \in \mathcal P} \mathop{\inf}_{(\lambda_i)_{1\leq i \leq n-1} \in \R^{n-1}}  \sqrt{{\rm Var}\left[\left. g_\mu(Z) - \sum_{i=1}^{n-1} \lambda_i g_{\mub_i}(Z)\right| \Zb^{1:\infty}\right]},\\ \label{eq:sigb}
 \sigb_{n-1}(\mathcal M)& := \mathop{\inf}_{(\lambda_i)_{1\leq i \leq n-1} \in \R^{n-1}}  \sqrt{{\rm Var}\left[\left. g_{\mub_n}(Z) - \sum_{i=1}^{n-1} \lambda_i g_{\mub_i}(Z)\right| \Zb^{1:\infty}\right]}\\ \nonumber
\end{align}

%  \item[(S1)] $\begin{array}{ll}
% 
% &= \mathop{\max}_{\mu \in \mathcal P} \mathop{\inf}_{(\lambda_i)_{1\leq i \leq n-1} \in \R^{n-1}}  \sqrt{{\rm Var}\left[\left. g_\mu(Z) - \sum_{i=1}^{n-1} \lambda_i \gb_i(Z)\right| \Zb^{1:\infty}\right]},\\ \nonumber
% \end{array}$
% \item[(S2)] $\begin{array}{ll}
% 
% &=\mathop{\inf}_{(\lambda_i)_{1\leq i \leq n-1} \in \R^{n-1}}  \sqrt{{\rm Var}\left[\left. g_{\mub_n}(Z) - \sum_{i=1}^{n-1} \lambda_i \gb_i(Z)\right| \Zb^{1:\infty}\right]}\\ \nonumber
% &= \sqrt{{\rm Var}\left[\left. g_{\mub_n}(Z) - \sum_{i=1}^{n-1} \lambb^n_i \gb_i(Z)\right| \Zb^{1:\infty}\right]}.\\\nonumber
% \end{array}$
% \end{enumerate}

Let us point out here that $\sigh_{n-1}(\mathcal M)$ is a random variable which is measurable with respect to $\overline{Z}^{1:(n-1)}$ whereas $\mub_n$ and $\sigb_{n-1}(\mathcal M)$ are measurable with respect 
to $\overline{Z}^{1:n}$.

\subsection{Main theoretical result}

The aim of this section is to study the effect of Monte-Carlo sampling on the convergence of such a greedy algorithm. We consider here the probability space $(\Omega, \mathcal A(\overline{Z}^{1:\infty}), \mathbb{P})$ the probabilty 
space where $\mathcal A(\overline{Z}^{1:\infty})$ denotes the set of events that are measurable with respect to $\overline{Z}^{1:\infty}$. We prove, under appropriate assumptions on the probability density $\nu$
and on the set of functions $\mathcal M = \left\{g_\mu, \mu\in \mathcal P\right\}$, that for all $0< \gamma < 1$, there exist explicit conditions on the sequence $(M_n)_{n\in\mathbb{N}^*}$ so that, with high probability, the MC-greedy algorithm is actually 
a weak greedy algorithm with parameter $\gamma$. More precisely, under this set of assumptions, we prove that, with high probability, it holds that for all $n\in \mathbb{N}^*$, 
$$
\sigb_{n-1}(\mathcal M) \geq \gamma  \sigh_{n-1}(\mathcal M).
$$

\medskip

Let us now present the set of assumptions we make on $\nu$ and on the set $\mathcal M = \left\{ g_\mu, \; \mu \in \mathcal P\right\}$ for our main result to hold. 

\medskip

From now on, we make the following assumption on the probability distribution $\nu$.

\medskip

\bfseries Assumption (A): \normalfont The probability law $\nu$ is such that there exist $\alpha >1$ and $\beta >0$ such that 
$$
\int_{\R} e^{\beta |x|^\alpha}\,d\nu(x) < +\infty.
$$

Let us denote by $\mathcal L$ the set of Lipschitz functions of $\R^d$ and for all $f\in \mathcal L$, 
let us denote by $\|f\|_{\mathcal L}$ its Lipschitz constant. In the sequel, we denote by $\phi:\R_+^* \to \R_+^*$ the function defined by
\begin{equation}\label{eq:defphi}
\forall \kappa\in \R_+^*, \quad \phi(\kappa):= \left\{ 
\begin{array}{ll}
 \kappa^2 \un_{\kappa \leq 1} + \kappa^\alpha \un_{\kappa >1} & \quad \mbox{ if }d=1,\\
 (\kappa/\log(2 + 1/\kappa)^2) \un_{\kappa \leq 1}   + \kappa^\alpha \un_{\kappa >1}& \quad \mbox{ if }d=2,\\
  \kappa^d \un_{\kappa \leq 1}   + \kappa^\alpha \un_{\kappa >1} & \quad \mbox{ if }d\geq 3.\\
\end{array}
\right.
\end{equation}

A key ingredient in our analysis is the use 
of concentration inequalities in the Wasserstein-1 distance between a probability distribution and its empirical measure proved in~\cite{bolley2007quantitative, fournier2015rate}. 
Let us recall here a direct corollary of Theorem~2 of~\cite{fournier2015rate}, which is the backbone of our analysis.
\begin{corollary}\label{cor:concentration}
Let us assume that $\nu$ satisfies assumption (A). Then, there exist positive constants $c,C$ depending only on $\nu$, $d$, $\alpha$ and $\beta$, such that, 
for all $M\in \mathbb{N}^*$, all $\overline{Z}:=(Z_k)_{1\leq k \leq M}$ iid random vectors distributed according to $\nu$ and all $\kappa>0$, it holds that
$$
\mathbb{P}\left[ \mathcal{T}_1\left(\overline{Z}\right) \geq \kappa\right] \leq C e^{-cM\phi(\kappa)},
$$
where 
$$
\mathcal{T}_1\left( \overline{Z}\right):= \mathop{\sup}_{f \in \mathcal L; \|f\|_{\mathcal L} \leq 1} \left| \mathbb{E}[f(Z)] - \mathbb{E}_{\overline{Z}}(f)\right|.
$$
\end{corollary}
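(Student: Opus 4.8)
The plan is to recognize that $\mathcal{T}_1(\overline{Z})$ is exactly the Wasserstein-$1$ distance between $\nu$ and the empirical measure associated with $\overline{Z}$, and then to specialize the concentration estimate of~\cite{fournier2015rate} to the exponent $p=1$. To this end, first I would introduce the random empirical measure
$$
\mu_{\overline{Z}} := \frac{1}{M} \sum_{k=1}^M \delta_{Z_k},
$$
so that $\mathbb{E}_{\overline{Z}}(f) = \int_{\R^d} f\, d\mu_{\overline{Z}}$ and $\mathbb{E}[f(Z)] = \int_{\R^d} f\, d\nu$ for every $f \in \mathcal{C}(\R^d)$. By the Kantorovich--Rubinstein duality formula, the supremum of $\left| \int_{\R^d} f\, d\nu - \int_{\R^d} f\, d\mu_{\overline{Z}}\right|$ over all $1$-Lipschitz functions $f$ equals $W_1(\nu, \mu_{\overline{Z}})$; hence $\mathcal{T}_1(\overline{Z}) = W_1(\nu, \mu_{\overline{Z}})$, and the claim reduces to a concentration inequality for $W_1(\nu, \mu_{\overline{Z}})$.

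Next I would check that Assumption~(A) is precisely the exponential-moment hypothesis of Theorem~2 of~\cite{fournier2015rate}: interpreting the integral as taken over $\R^d$ with $|x|$ the Euclidean norm, it gives $\int_{\R^d} e^{\beta |x|^\alpha}\, d\nu(x) < +\infty$ with $\alpha > 1$, which places us in the regime $\alpha > p$ with $p = 1$. Applying that theorem then yields, for constants $c, C$ depending only on $\nu, d, \alpha, \beta$ (since $p = 1$ is fixed and the value of the exponential moment is determined by $\nu$), a bound $C e^{-c M \psi(\kappa)}$ whose exponent $\psi$ is governed by the position of $d$ relative to $2p = 2$: for $\kappa \le 1$ it scales like $\kappa^2$ when $d=1$, like $(\kappa/\log(2 + 1/\kappa))^2$ when $d=2$, and like $\kappa^d$ when $d \ge 3$, while for $\kappa > 1$ it scales like $\kappa^{\alpha/p} = \kappa^\alpha$ in every case. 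These are exactly the branches and regimes packaged into the function $\phi$ of~(\ref{eq:defphi}).

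The remaining work is bookkeeping rather than new analysis, and the delicate point is the final patching step. The cited theorem is stated with possibly different multiplicative constants and polynomial prefactors across the dimensional regimes and at the critical dimension $d = 2$, so I would take the worst constants over these finitely many cases and absorb any polynomial-in-$\kappa$ prefactor into the exponential by slightly shrinking $c$, thereby obtaining the single clean inequality $\mathbb{P}[\mathcal{T}_1(\overline{Z}) \ge \kappa] \le C e^{-c M \phi(\kappa)}$ valid for all $\kappa > 0$ and all $M \in \mathbb{N}^*$. The only genuinely preliminary issue is fixing the interpretation of Assumption~(A) for $d \ge 2$ (the integrand involving the Euclidean norm on $\R^d$), which is needed for the moment hypothesis of~\cite{fournier2015rate} to apply verbatim.
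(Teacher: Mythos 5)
Your proposal is correct and matches the paper's (implicit) derivation: the paper states this result without proof as a direct corollary of Theorem~2 of~\cite{fournier2015rate}, and your argument --- identifying $\mathcal{T}_1(\overline{Z})$ with $W_1(\nu,\mu_{\overline{Z}})$ via Kantorovich--Rubinstein duality, matching Assumption~(A) to the exponential-moment hypothesis with $p=1$ and $\alpha>p$, and checking that the three dimensional regimes and the $\kappa>1$ tail reproduce the function $\phi$ of~(\ref{eq:defphi}) --- is exactly the specialization the authors have in mind. Your closing remarks on absorbing constants and on reading Assumption~(A) as an integral over $\R^d$ (a typo in the paper) are accurate but do not change the substance.
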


\begin{remark}
We would like to mention here that other concentration inequalities are stated in Theorem~2 of~\cite{fournier2015rate} under 
different sets of assumptions than (A) on the probability law $\nu$. In particular, weaker concentration inequalities may be obtained when $\nu$ only has some finite polynomial moments. 
Our analysis can then be easily adapted to these different settings but we restrict ourselves here to a framework where $\nu$ satisfies Assumption (A) for the sake of clarity. 
\end{remark}

\medskip

We finally make the following set of assumptions on $\mathcal M$ defined in (\ref{eq:defM}).

\medskip

\bfseries Assumption (B): \normalfont The set $\mathcal M$ satisfies the four conditions:
\begin{itemize}
 \item[(B1)] $\mathcal M$ is a compact subset of $L^2_{\nu, 0}(\R^d)$ and let $K_2:= \sup_{\mu \in \mathcal P} \|g_\mu\| < \infty$;
 \item[(B2)] $\mathcal M \subset \mathcal L$ and $K_{\mathcal L}:= \sup_{\mu \in \mathcal P} \|g_\mu\|_{\mathcal L} < +\infty$; 
 \item[(B3)] $\mathcal M \subset L^\infty(\R^d)$ and $K_{\infty}:= \sup_{\mu \in \mathcal P} \|g_\mu\|_{L^\infty} < +\infty$;
  \item[(B4)] for all $n\in \mathbb{N}^*$, $d_n(\mathcal M)>0$.
\end{itemize}

\medskip

Before presenting our main result, we need to introduce some additional notation. Using Lemma~\ref{lem:def}, we can almost surely define the sequence $(\gb_n)_{n\in\mathbb{N}^*}$ as the 
orthonormal family of $L^2_{\nu,0}(\R^d)$ obtained by a Gram-Schmidt orthonormalization procedure (for~$\|\cdot\|_{\overline{Z}^{1:\infty}}$) 
from the family $(g_{\mub_n})_{n\in\mathbb{N}^*}$. 
More precisely, we define
$$
\gb_1:= \frac{g_{\mub_1}}{\sqrt{{\rm Var}\left[g_{\mub_1}(Z)\left| \overline{Z}^{1:\infty}\right.\right] }}.
$$
Moreover, for all $n\geq 2$, et $\lambb^n:=\left(\lambb^n_i\right)_{1\leq i \leq n-1} \in \mathbb{R}^{n-1}$ be a solution to the minimization problem
$$
\lambb^n  \in \mathop{{\rm argmin}}_{\lambda:=\left(\lambda_i\right)_{1\leq i \leq n-1}\in \R^{n-1}} {\rm Var}\left[ \left. g_{\mub_n}(Z) - \sum_{i=1}^{n-1} \lambda_i \gb_i(Z) \right| \Zb^{1:\infty}\right].
$$
Then it holds that
$$
\gb_n := \frac{g_{\mub_n} - \sum_{i=1}^{n-1} \lambb^n_i \gb_i}{ \sqrt{{\rm Var}\left[ \left. g_{\mub_n}(Z) - \sum_{i=1}^{n-1} \lambb^n_i \gb_i(Z) \right| \Zb^{1:\infty}\right]}}.
$$
As a consequence, it always holds that $\Vb_n = {\rm Span}\left\{ g_{\mub_1}, \cdots, g_{\mub_n}\right\} =  {\rm Span}\left\{ \gb_1, \cdots, \gb_n\right\}$. Moreover, $\gb_n$ is $\overline{Z}^{1:n}$-measurable. 

\medskip

We are now in position to state our main result, the proof of which is postponed to Section~\ref{sec:proof}.  
\begin{theorem}\label{th:main}
Let $0< \delta <1$ and $(\delta_n)_{n\in\mathbb{N}^*} \subset (0,1)^{\mathbb{N}^*}$ be a sequence of numbers satisfying $\Pi_{n\in\mathbb{N}^*} \left( 1- \delta_n\right) \geq 1- \delta$. 
Let us assume that $\mathcal M$ satisfies assumption (B) and that $\nu$ satisfies assumption (A). Let $C,c>0$ be the constants defined in Corollary~\ref{cor:concentration}.

For all $n\in \mathbb{N}^*$, let 
\begin{equation}\label{eq:defKK}
K_\infty^{n}:= \max\left( K_\infty, \|\gb_1\|_{L^\infty}, \cdots, \|\gb_n\|_{L^\infty}\right) \quad \mbox{ and } \quad  K_{\mathcal L}^{n}:= \max\left( K_{\mathcal L}, \|\gb_1\|_{\mathcal L}, \cdots, \|\gb_n\|_{\mathcal L}\right).
\end{equation}
Let us assume that there exists $0< \gamma < 1$ such that for all $n\in \mathbb{N}^*$, $M_n \in\mathbb{N}^*$ is a $\overline{Z}^{1:(n-1)}$ measurable random variable which satisfies almost surely the following condition:
\begin{equation}\label{eq:cond1}
\forall n\geq 1, \quad M_n \geq -\ln\left( \frac{\delta_n}{C}\right) \frac{1}{c\phi\left( \kappa_{n-1}\right)},
\end{equation}
where $\kappa_{n-1}$ is a deterministic function of $\overline{Z}^{1:(n-1)}$, defined by 
\begin{equation}\label{eq:kappa0}
\kappa_0:= \frac{\left(1 - \gamma^2\right)  \sigh_0(\mathcal M)^2 }{8 K_\infty K_{\mathcal L} };
\end{equation}
 and 
\begin{equation}\label{eq:kappa1}
\forall n\geq 2, \quad \kappa_{n-1}:=  \frac{\min\left( \frac{1}{2(n-1)}, \quad \frac{(1-\gamma^2) \widehat{\sigma}_{n-1}(\mathcal M)^2}{n\left(9 K_2^2 + 4\right)}\right)}{6K_\infty^{n-1}  K_{\mathcal L}^{n-1}}.
\end{equation}
Then, for all $n\in \mathbb{N}^*$, it holds that
\begin{equation}\label{eq:firstrel}
\mathbb{P}\left[ \sigb_{n-1}(\mathcal M) \geq \gamma  \sigh_{n-1}(\mathcal M) \left| \overline{Z}^{1:(n-1)} \right. \right] \geq 1- \delta_n. 
\end{equation}
As a consequence, denoting by $\mathcal G_n$ the event $\sigb_{n-1}(\mathcal M) \geq \gamma  \sigh_{n-1}(\mathcal M)$ for all $n\in \mathbb{N}^*$, it holds that 
\begin{equation}\label{eq:lastres}
\mathbb{P}\left[ \bigcap_{n\in\mathbb{N}^*} \mathcal G_n\right] \geq 1- \delta.
\end{equation}
Thus, it then holds that the MC-greedy algorithm is a weak greedy algorithm with parameter $\gamma$ and norm $\|\cdot\|_{\Zb^{1:\infty}}$ with probability at least $1-\delta$. 
\end{theorem}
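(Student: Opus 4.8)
The plan is to prove the per-iteration conditional bound (\ref{eq:firstrel}) first and then deduce (\ref{eq:lastres}) by a telescoping conditioning argument. Since $M_n$ and $\kappa_{n-1}$ are $\overline{Z}^{1:(n-1)}$-measurable (and strictly positive thanks to Lemma~\ref{lem:def} and (B4)) while $\overline{Z}^n$ is an independent iid sample from $\nu$, conditionally on $\overline{Z}^{1:(n-1)}$ I may apply Corollary~\ref{cor:concentration} with the now deterministic values of $M_n,\kappa_{n-1}$. Condition (\ref{eq:cond1}) is exactly equivalent to $C e^{-cM_n\phi(\kappa_{n-1})}\le \delta_n$, so
$$\mathbb{P}\left[\mathcal{T}_1(\overline{Z}^n) \ge \kappa_{n-1} \mid \overline{Z}^{1:(n-1)}\right] \le \delta_n.$$
The whole game is then to show the deterministic containment $\{\mathcal{T}_1(\overline{Z}^n) < \kappa_{n-1}\}\subseteq \mathcal{G}_n$, i.e. that a small Wasserstein discrepancy forces $\overline{\sigma}_{n-1}(\mathcal{M}) \ge \gamma \widehat{\sigma}_{n-1}(\mathcal{M})$. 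Granting this, (\ref{eq:firstrel}) follows, and since $\bigcap_{m<n}\mathcal{G}_m$ is $\overline{Z}^{1:(n-1)}$-measurable, writing $\mathbb{P}\left[\bigcap_{m\le n}\mathcal{G}_m\right] = \mathbb{E}\left[\un_{\bigcap_{m<n}\mathcal{G}_m}\,\mathbb{P}[\mathcal{G}_n\mid \overline{Z}^{1:(n-1)}]\right] \ge (1-\delta_n)\,\mathbb{P}\left[\bigcap_{m<n}\mathcal{G}_m\right]$ and iterating yields $\mathbb{P}\left[\bigcap_n\mathcal{G}_n\right]\ge \prod_n(1-\delta_n)\ge 1-\delta$.

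The deterministic engine converting the $\mathcal{T}_1$-bound into variance control is elementary: for bounded Lipschitz $f,g$ the product satisfies $\|fg\|_{\mathcal{L}} \le \|f\|_{L^\infty}\|g\|_{\mathcal{L}} + \|g\|_{L^\infty}\|f\|_{\mathcal{L}}$, and by the Kantorovich--Rubinstein duality underlying $\mathcal{T}_1$ one has $|\mathbb{E}[h(Z)] - \mathbb{E}_{\overline{Z}^n}(h)| \le \|h\|_{\mathcal{L}}\,\mathcal{T}_1(\overline{Z}^n)$ for every Lipschitz $h$. Applying this to $h=fg$ and to $h=f,g$ (and using $\mathbb{E}[g_\mu(Z)]=0$), I get $|\mathrm{Cov}_{\overline{Z}^n}(f,g) - \langle f,g\rangle| \le 2(\|f\|_{L^\infty}\|g\|_{\mathcal{L}} + \|g\|_{L^\infty}\|f\|_{\mathcal{L}})\,\mathcal{T}_1(\overline{Z}^n)$ and in particular $|\mathrm{Var}_{\overline{Z}^n}(g)-\|g\|^2| \le 4\|g\|_{L^\infty}\|g\|_{\mathcal{L}}\,\mathcal{T}_1(\overline{Z}^n)$; this is where $K_\infty,K_{\mathcal{L}}$ (and their iteration-dependent versions $K_\infty^{n-1},K_{\mathcal{L}}^{n-1}$, which also control the $\overline{g}_i$) enter the denominators of $\kappa_0,\kappa_{n-1}$. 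For the initialisation this already suffices: with $\mu^\star$ maximising $\mu\mapsto\|g_\mu\|$, the chain $\|g_{\overline{\mu}_1}\|^2 \ge \mathrm{Var}_{\overline{Z}^1}(g_{\overline{\mu}_1}) - 4K_\infty K_{\mathcal{L}}\mathcal{T}_1 \ge \mathrm{Var}_{\overline{Z}^1}(g_{\mu^\star}) - 4K_\infty K_{\mathcal{L}}\mathcal{T}_1 \ge \|g_{\mu^\star}\|^2 - 8K_\infty K_{\mathcal{L}}\mathcal{T}_1$ (the middle step using that $\overline{\mu}_1$ maximises the empirical variance) combined with $\mathcal{T}_1 < \kappa_0$ gives $\|g_{\overline{\mu}_1}\|^2 \ge \gamma^2\widehat{\sigma}_0(\mathcal{M})^2$.

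The general step $n\ge 2$ follows the same argmax sandwich, but applied to the projection errors $P_{n-1}(\mu)^2 := \inf_\lambda \|g_\mu - \sum_i\lambda_i\overline{g}_i\|^2$ and their empirical counterparts $\widehat{P}_{n-1}(\mu)^2 := \inf_\lambda \mathrm{Var}_{\overline{Z}^n}(g_\mu - \sum_i\lambda_i\overline{g}_i)$; the latter is exactly the quantity maximised by the algorithm since $\mathrm{Span}\{g_{\overline{\mu}_i}\} = \mathrm{Span}\{\overline{g}_i\}$. Once I establish the uniform comparison $\sup_\mu|\widehat{P}_{n-1}(\mu)^2 - P_{n-1}(\mu)^2| \le \tfrac12(1-\gamma^2)\widehat{\sigma}_{n-1}(\mathcal{M})^2$, the chain $P_{n-1}(\overline{\mu}_n)^2 \ge \widehat{P}_{n-1}(\overline{\mu}_n)^2 - \tfrac12(1-\gamma^2)\widehat{\sigma}_{n-1}^2 \ge \widehat{P}_{n-1}(\mu^\star)^2 - \tfrac12(1-\gamma^2)\widehat{\sigma}_{n-1}^2 \ge \gamma^2\widehat{\sigma}_{n-1}^2$ closes the step, where $\mu^\star$ now maximises $P_{n-1}$.

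The main obstacle is precisely this uniform comparison of projection errors, because the empirical and exact projections use different inner products. Writing the exact projection via the true Gram matrix (the identity, as $\{\overline{g}_i\}$ is $\|\cdot\|$-orthonormal) and the empirical one via $G_{ij} := \mathrm{Cov}_{\overline{Z}^n}(\overline{g}_i,\overline{g}_j)$, the covariance estimate gives $|G_{ij}-\delta_{ij}| \le 4K_\infty^{n-1}K_{\mathcal{L}}^{n-1}\mathcal{T}_1$, so the first branch $\mathcal{T}_1 < \kappa_{n-1} \le \big(2(n-1)\cdot 6K_\infty^{n-1}K_{\mathcal{L}}^{n-1}\big)^{-1}$ forces $\|G-I\|\le \tfrac12$, hence $G$ invertible with $G^{-1}$ close to $I$. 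The exact coefficients $a_i=\langle g_\mu,\overline{g}_i\rangle$ satisfy $\sum_i a_i^2 \le \|g_\mu\|^2 \le K_2^2$, which bounds the relevant residuals $\|h\|_{L^\infty},\|h\|_{\mathcal{L}}$ by multiples of $K_\infty^{n-1},K_{\mathcal{L}}^{n-1}$ and lets me estimate both $\mathrm{Var}_{\overline{Z}^n}(g_\mu) - \|g_\mu\|^2$ and the quadratic-form difference $b^\top G^{-1} b - a^\top a$ (with $b_i=\mathrm{Cov}_{\overline{Z}^n}(g_\mu,\overline{g}_i)$) by sums of at most $n$ terms, each of order $K_2^2 K_\infty^{n-1}K_{\mathcal{L}}^{n-1}\mathcal{T}_1$; this is where the factor $n(9K_2^2+4)$ in the second branch of $\kappa_{n-1}$ originates. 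The technical heart is the bookkeeping through the perturbation identity $b^\top G^{-1}b - a^\top a = (b-a)^\top G^{-1}(b+a) - a^\top(G^{-1}-I)a$, checking that $\mathcal{T}_1 < \kappa_{n-1}$ indeed yields the required $\tfrac12(1-\gamma^2)\widehat{\sigma}_{n-1}^2$ bound.
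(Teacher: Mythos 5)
Your proposal is correct, and its probabilistic skeleton coincides exactly with the paper's proof: conditioning on $\overline{Z}^{1:(n-1)}$ so that Corollary~\ref{cor:concentration} applies with the (then deterministic) $M_n,\kappa_{n-1}$; converting the $\mathcal{T}_1$-bound into covariance and variance perturbations through Lipschitz-product estimates (this is what the paper's Lemmas~\ref{lem:cent} and~\ref{lem:cov} do); the argmax sandwich for the initialization (identical to Lemma~\ref{lem:first}, including the constant $8K_\infty K_{\mathcal L}$); and the tower-property recursion $\mathbb{P}\left[\bigcap_{k\le n+1}\mathcal G_k\right]\ge(1-\delta_{n+1})\,\mathbb{P}\left[\bigcap_{k\le n}\mathcal G_k\right]$. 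The one genuine difference is the organization of the iteration step $n\ge 2$. You establish a \emph{uniform} two-sided comparison $\sup_{\mu\in\mathcal P}\left|\widehat P_{n-1}(\mu)^2-P_{n-1}(\mu)^2\right|\le\tfrac12(1-\gamma^2)\,\sigh_{n-1}(\mathcal M)^2$ via the explicit formulas $P_{n-1}(\mu)^2=\|g_\mu\|^2-a^\top a$ and $\widehat P_{n-1}(\mu)^2={\rm Var}_{\Zb^n}\left(g_\mu\right)-b^\top G^{-1}b$, and then run a four-inequality sandwich. The paper (Lemma~\ref{lem:nit}) instead telescopes $\sigh_{n-1}^2-\sigb_{n-1}^2$ through five differences that mix exact and empirical coefficients, kills three of them by optimality (sign) arguments, and bounds only the two remaining ones by $\eta$-terms, so it needs perturbation estimates only at the two parameters $\muh_n$ and $\mub_n$ and with a specific mixed pairing of coefficients. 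Both routes consume the same ingredients: the Gram perturbation $\max_{i,j}|G_{ij}-\delta_{ij}|\le\eta$ and its inversion under the first branch of $\kappa_{n-1}$, the Bessel bound $\|a\|_{\ell^2}\le K_2$, and the empirical-coefficient bound $(K_2+\sqrt{n-1}\,\eta)/(1-(n-1)\eta)$; since all these are uniform in $\mu$ (the constants $K_2,K_\infty^{n-1},K_{\mathcal L}^{n-1}$ do not depend on $\mu$), your uniform comparison costs nothing extra and is arguably more transparent, at the price of the factor-of-two split of the tolerance, whose compatibility with the stated $\kappa_{n-1}$ you rightly flag as the remaining bookkeeping — it does check out, since $2(K_2+1)^2\le 9K_2^2+4$ holds for every $K_2>0$ with room to spare. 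Two cosmetic slips: the perturbation identity should read $b^\top G^{-1}b-a^\top a=(b-a)^\top G^{-1}(b+a)+a^\top\left(G^{-1}-I\right)a$ (plus sign), which is harmless since you only use absolute values; and your covariance-perturbation constant is $4K_\infty^{n-1}K_{\mathcal L}^{n-1}$ where the paper uses $6K_\infty^{n-1}K_{\mathcal L}^{n-1}$, which only makes the condition imposed on $\eta$ by $\kappa_{n-1}$ easier to satisfy.
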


We state here a direct corollary of Theorem~\ref{th:main}, the proof of which is given below. 

\begin{corollary}\label{cor:main}
 Under the assumptions of Theorem~\ref{th:main}, with probability $1-\delta$, it holds that for all $n\in \mathbb{N}^*$, 
 \begin{equation}\label{eq:eq1n}
 \sigh_{n}(\mathcal M)  \leq \sqrt{2}\gamma^{-1} \mathop{\min}_{1\leq m < n} \left( d_m(\mathcal M) \right)^{\frac{n-m}{n}}.
 \end{equation}
 In particular, with probability $1-\delta$, it holds that 
 \begin{equation}\label{eq:eq2n}
 \forall n\in \mathbb{N}^*, \quad \sigh_{2n}(\mathcal M)  \leq \sqrt{2}\gamma^{-1} \sqrt{d_n(\mathcal M)}.
 \end{equation}
\end{corollary}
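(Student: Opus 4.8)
The plan is to deduce the corollary by combining the high-probability statement of Theorem~\ref{th:main} with the purely deterministic convergence result for weak greedy algorithms recalled in Theorem~\ref{cor:weakgreed}. Theorem~\ref{th:main} guarantees that the event $\mathcal G := \bigcap_{n\in\mathbb{N}^*}\mathcal G_n$, on which $\sigb_{n-1}(\mathcal M)\geq \gamma\,\sigh_{n-1}(\mathcal M)$ for every $n$, satisfies $\mathbb{P}[\mathcal G]\geq 1-\delta$. I would therefore argue \emph{pathwise} on $\mathcal G$, intersected with the probability-one event on which $\|\cdot\|_{\Zb^{1:\infty}}$ coincides with $\|\cdot\|$ on $L^2_{\nu,0}(\R^d)$ (the identity established just before the statement of the MC-Greedy Algorithm), so that \eqref{eq:eq1n} is obtained as a deterministic consequence valid on a set of probability at least $1-\delta$.

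First I would identify the realized MC-greedy sequence as a genuine weak greedy sequence. Using the definitions \eqref{eq:sigh}--\eqref{eq:sigb} together with the fact that $\Zb^{1:\infty}$ is independent of $Z$, on the norm-identity event one has $\sigb_{n-1}(\mathcal M) = \inf_{\lambda\in\R^{n-1}}\|g_{\mub_n} - \sum_{i=1}^{n-1}\lambda_i g_{\mub_i}\|$ and $\sigh_{n-1}(\mathcal M) = \max_{\mu\in\mathcal P}\inf_{\lambda\in\R^{n-1}}\|g_\mu - \sum_{i=1}^{n-1}\lambda_i g_{\mub_i}\|$. Hence, for a fixed realization in $\mathcal G$, the event $\mathcal G_n$ is exactly the selection criterion of the Weak-Greedy Algorithm with parameter $\gamma$ (and, for $n=1$, $\mathcal G_1$ reproduces its initialization criterion) in the Hilbert space $L^2_{\nu,0}(\R^d)$. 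Thus, on $\mathcal G$, the realized family $(g_{\mub_n})_{n}$ is produced by a weak greedy algorithm with parameter $\gamma$ applied to $\mathcal M$, and $\sigh_n(\mathcal M)$ coincides with the associated weak greedy error $\sigma_n(\mathcal M)$.

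Given this identification, I would simply invoke Theorem~\ref{cor:weakgreed} pathwise: on $\mathcal G$, $\sigh_n(\mathcal M)=\sigma_n(\mathcal M)\leq \sqrt2\,\gamma^{-1}\min_{0\le m<n}\left(d_m(\mathcal M)\right)^{(n-m)/n}$. Since $\{1,\dots,n-1\}\subset\{0,\dots,n-1\}$, the minimum over $0\le m<n$ is no larger than that over $1\le m<n$, which yields \eqref{eq:eq1n}; this restriction also conveniently avoids the term $d_0(\mathcal M)$, which is not defined in our setting. The ``in particular'' assertion \eqref{eq:eq2n} then follows by specializing to $m=n$ in the bound for $\sigh_{2n}(\mathcal M)$, giving $\sqrt2\,\gamma^{-1}\left(d_n(\mathcal M)\right)^{(2n-n)/(2n)}=\sqrt2\,\gamma^{-1}\sqrt{d_n(\mathcal M)}$, exactly as in the second statement of Theorem~\ref{cor:weakgreed}.

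The only genuine subtlety, which is careful bookkeeping rather than a deep difficulty, is the passage from the random quantities $\sigh_{n-1}(\mathcal M)$ and $\sigb_{n-1}(\mathcal M)$ (defined through variances conditional on $\Zb^{1:\infty}$) to the deterministic framework of Theorem~\ref{cor:weakgreed}. The hard part will be to make this reduction rigorous, which rests on the almost-sure identity $\|\cdot\|_{\Zb^{1:\infty}}=\|\cdot\|$: it lets one treat each realization lying in $\mathcal G$ as a deterministic weak greedy run in the fixed Hilbert space $L^2_{\nu,0}(\R^d)$. One must also verify that intersecting $\mathcal G$ with the probability-one norm-identity event preserves $\mathbb{P}[\,\cdot\,]\geq 1-\delta$, which is immediate.
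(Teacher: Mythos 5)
Your proposal is correct and follows essentially the same route as the paper: invoke Theorem~\ref{th:main} to place yourself on the event of probability at least $1-\delta$ where the MC-greedy algorithm is a weak greedy algorithm with parameter $\gamma$, then apply Theorem~\ref{cor:weakgreed} pathwise and use the independence of $\Zb^{1:\infty}$ and $Z$ to pass between conditional and unconditional quantities. The only cosmetic difference is the point at which independence is used — you identify the norm $\|\cdot\|_{\Zb^{1:\infty}}$ with $\|\cdot\|$ up front and work in the deterministic Hilbert space, whereas the paper keeps the conditional norm throughout and instead identifies the conditional Kolmogorov width $d_n^{\Zb^{1:\infty}}(\mathcal M)$ with $d_n(\mathcal M)$ at the end; these are the same argument.
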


\begin{proof}
With probability $1-\delta$, the MC-greedy algorithm is a weak greedy algorithm with parameter $\gamma$ and norm $\|\cdot\|_{\Zb^{1:\infty}}$. Thus, since for all $n\in \mathbb{N}^*$, $\overline{\mu}_n$ is a 
$\Zb^{1:\infty}$ measurable random variable, if such an event is realized, using Theorem~\ref{cor:weakgreed}, it holds that for all $n\in \mathbb{N}^*$
 $$
  \sigh_{n}(\mathcal M)  \leq \sqrt{2}\gamma^{-1} \mathop{\min}_{1\leq m < n} \left( d_m^{\Zb^{1:\infty}}(\mathcal M) \right)^{\frac{n-m}{n}}, 
 $$
 where for all $n\in \mathbb{N}^*$, 
% \begin{align*}
% d_n(\mathcal M) & := \mathop{\inf}_{
% \begin{array}{c}
%  V_n \subset L^2_{\nu,0}(\R^d) \; {\rm subspace},\\
%  {\rm dim}\; V_n = n\\
% \end{array}
% }
% \mathop{\sup}_{\mu \in \mathcal P}
% \mathop{\inf}_{g_n\in V_n} \sqrt{{\rm Var}\left[ g_\mu(Z) - g_n(Z)\right]}\\
% & = \mathop{\inf}_{
% \begin{array}{c}
%  V_n \subset L^2_{\nu,0}(\R^d) \; {\rm subspace},\\
%  {\rm dim}\; V_n = n\\
% \end{array}
% }
% \mathop{\sup}_{\mu \in \mathcal P}
% \mathop{\inf}_{g_n\in V_n} \left\| g_\mu(Z) - g_n(Z)\right\|.\\
\begin{align*}
d_n^{\Zb^{1:\infty}}(\mathcal M)  & := \mathop{\inf}_{
 \begin{array}{c}
  V_n \subset L^2_{\nu,0}(\R^d) \; {\rm subspace},\\
  {\rm dim}\; V_n = n\\
 \end{array}
 }
 \mathop{\sup}_{\mu \in \mathcal P}
 \mathop{\inf}_{g_n\in V_n} \sqrt{{\rm Var}\left[ g_\mu(Z) - g_n(Z)\left|\Zb^{1:\infty}\right.\right]}\\ 
 & = \mathop{\inf}_{
\begin{array}{c}
  V_n \subset L^2_{\nu,0}(\R^d) \; {\rm subspace},\\
  {\rm dim}\; V_n = n\\
 \end{array}
 }
 \mathop{\sup}_{\mu \in \mathcal P}
 \mathop{\inf}_{g_n\in V_n} \sqrt{{\rm Var}\left[ g_\mu(Z) - g_n(Z)\right]}\\
 & = d_n(\mathcal M).\\
 \end{align*}
Hence, we obtain (\ref{eq:eq1n}), and (\ref{eq:eq2n}) as a consequence. 
\end{proof}
% 
% 
% 
% \begin{remark}
% Inequality (\ref{eq:lastres}) then implies that, with probability $1-\delta$, the MC-greedy algorithm is a weak greedy algorithm with parameter $\gamma$, so that for all $n\in \mathbb{N}^*$, 
% $$
% \sigh_n(\mathcal M) \leq \sqrt{2} \gamma^{-1} \mathop{\min}_{1\leq m < n} \left(d_m(\mathcal M)\right)^{\frac{n-m}{n}}. 
% $$
% In particular, with probability $1-\delta$, it holds that
% $$
% \forall n \in \mathbb{N}^* , \quad \sigh_{2n}(\mathcal M) \leq \sqrt{2} \gamma^{-1} \sqrt{d_n(\mathcal M)}. 
% $$
% \end{remark}

Some remarks are in order here. 

\begin{remark}
Note that, since the random variables $K_\infty^{n-1}$, $K_{\mathcal L}^{n-1}$ and $\widehat{\sigma}_{n-1}(\mathcal M)$ are measurable with respect to $\overline{Z}^{1:(n-1)}$, $\kappa_{n-1}$ 
is also measurable with respect to $\overline{Z}^{1:(n-1)}$.
\end{remark}

\begin{remark}\label{rem:main}
A natural question is then the following: can Theorem~\ref{th:main} be used (at least in principle) to design a \normalfont constructive strategy \normalfont\itshape  to choose a number of 
samples $M_n$, so that the MC-greedy algorithm can be guaranteed to be a weak greedy 
algorithm with parameter~$\gamma$? This can indeed be done in principle using the following remark: for all $n\in \mathbb{N}^*$, the quantity
$\sigh_{n-1}(\mathcal M)$ defined by (\ref{eq:sigh}) cannot be computed in practice since variances cannot be computed exactly for any parameter $\mu\in \mathcal P$. However, almost surely, it holds that
$\sigb_{n-1}(\mathcal M)$ defined by (\ref{eq:sigb}) satisfies $\sigb_{n-1}(\mathcal M)  \leq \sigh_{n-1}(\mathcal M)$. Let us recall that $\sigb_{n-1}(\mathcal M)$ depends on $\overline{Z}^{1:n}$, whereas 
$\sigh_{n-1}(\mathcal M)$ only depends on $\overline{Z}^{1:(n-1)}$. Since $\phi$ is an increasing function, this implies that, if the sequence $\left(M_n\right)_{n\in \mathbb{N}^*}$ satisfies condition (\ref{eq:cond1}) where 
$\sigh_0(\mathcal M)$ is replaced by $\sigb_0(\mathcal M)$ in (\ref{eq:kappa0}) and 
$\sigh_{n-1}(\mathcal M)$ is replaced by $\sigb_{n-1}(\mathcal M)$ in (\ref{eq:kappa1}), the assumptions of Theorem~\ref{th:main} are satisfied. Besides, it is reasonable to expect in this case 
that $\sigb_{n-1}(\mathcal M)$ should provide a reasonable approximation of $\sigh_{n-1}(\mathcal M)$ since, from Theorem~\ref{th:main}, $\sigb_{n-1}(\mathcal M)  \geq \gamma \sigh_{n-1}(\mathcal M)$ with high probability. 

\medskip

Unfortunately, we will see that such an approach is not viable in practice, because it leads to much too large values of $M_n$ for small values of $n$ for the MC-greedy algorithm to be interesting with a view 
to the variance reduction method 
explained in Section~\ref{sec:motiv}. That is why in Section~\ref{sec:resnum}, we will present numerical results with heuristic ways to choose values of $(M_n)_{n\in\mathbb{N}^*}$ which are not theoretically guaranteeed, but which nevertheless yield satisfactory numerical results in several test cases. 
\end{remark}

\subsection{Proof of Theorem~\ref{th:main}}\label{sec:proof}

The aim of this section is to prove Theorem~\ref{th:main}. For all $n\in \mathbb{N}^*$, we denote by $\mathcal G_n$ the event $\sigb_n(\mathcal M) \geq \gamma \sigh_n(\mathcal M)$.

Let us begin by proving some intermediate results which will be used later. We first need the following auxiliary lemma. 
\begin{lemma}\label{lem:cent}
 Let $n\in \mathbb{N}^*$. Then, almost surely,
$$
 \mathop{\sup}_{\begin{array}{c}
                 f \; \Zb^{1:\infty}\mbox{-measurable random function}\\
                 \mbox{such that } \|f\|_{\mathcal L} \leq 1 \mbox{ almost surely}\\
                \end{array}
} \left| \mathbb{E}\left[ f(Z) |\Zb^{1:\infty}\right] - \mathbb{E}_{\overline{Z}^n}(f)\right|=  \mathop{\sup}_{f\in \mathcal L; \|f\|_{\mathcal L} \leq 1} \left| \mathbb{E}\left[ f(Z)\right] - 
 \mathbb{E}_{\overline{Z}^n}(f)\right|.
$$
 \end{lemma}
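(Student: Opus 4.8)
The plan is to show that the two $\Zb^{1:\infty}$-measurable random variables
$$
S := \sup_{\substack{f\ \Zb^{1:\infty}\text{-measurable}\\ \|f\|_{\mathcal L}\le 1\ \text{a.s.}}} \left| \mathbb{E}\left[ f(Z)\,|\,\Zb^{1:\infty}\right] - \mathbb{E}_{\Zb^n}(f)\right|
\quad\text{and}\quad
T := \sup_{f\in\mathcal L,\ \|f\|_{\mathcal L}\le 1} \left| \mathbb{E}[f(Z)] - \mathbb{E}_{\Zb^n}(f)\right| = \mathcal{T}_1(\Zb^n)
$$
coincide almost surely. Two structural facts drive the argument: $Z$ is independent of $\Zb^{1:\infty}$, and $\Zb^n$, being a subfamily of $\Zb^{1:\infty}$, is $\Zb^{1:\infty}$-measurable.

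For the inequality $S\ge T$, I would note that every deterministic $g\in\mathcal L$ with $\|g\|_{\mathcal L}\le 1$ is in particular an admissible (constant in $\omega$) competitor in the supremum defining $S$. Since $Z$ is independent of $\Zb^{1:\infty}$, one has $\mathbb{E}\left[ g(Z)\,|\,\Zb^{1:\infty}\right] = \mathbb{E}[g(Z)]$ almost surely, so $\left| \mathbb{E}[g(Z)] - \mathbb{E}_{\Zb^n}(g)\right|\le S$ almost surely; taking the supremum over a countable family of $g$ dense in the unit ball of $\mathcal L$ then gives $T\le S$ almost surely.

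The core of the proof is the reverse bound $S\le T$, which I would establish by a freezing argument based on the independence of $Z$ and $\Zb^{1:\infty}$. For a realization $\bar z$ of $\Zb^{1:\infty}$, denote by $f_{\bar z}$ the deterministic function on $\R^d$ obtained by freezing the randomness of $f$ at $\Zb^{1:\infty}=\bar z$; by assumption $\|f_{\bar z}\|_{\mathcal L}\le 1$ for almost every $\bar z$. Using the regular conditional distribution of $Z$ given $\Zb^{1:\infty}$, which by independence is simply $\nu$, the conditional expectation admits the simultaneous version
$$
\mathbb{E}\left[ f(Z)\,|\,\Zb^{1:\infty}\right] = \int_{\R^d} f_{\Zb^{1:\infty}}(x)\,d\nu(x) = \mathbb{E}[f_{\Zb^{1:\infty}}(Z)] \qquad\text{almost surely.}
$$
Consequently, for almost every $\bar z$ the quantity $\left| \mathbb{E}\left[ f(Z)\,|\,\Zb^{1:\infty}\right] - \mathbb{E}_{\Zb^n}(f)\right|$ evaluated at $\bar z$ equals $\left| \mathbb{E}[f_{\bar z}(Z)] - \mathbb{E}_{\bar z^n}(f_{\bar z})\right|$, where $\bar z^n$ is the now-fixed configuration $\Zb^n$ extracted from $\bar z$; since $f_{\bar z}$ is a fixed $1$-Lipschitz function, this is bounded by $T(\bar z)$. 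Taking the supremum over $f$ yields $S\le T$ almost surely, and combining with the first inequality proves the lemma.

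The step requiring the most care is not the freezing identity itself but the passage from pointwise, per-realization estimates to almost-sure inequalities between the two suprema: a priori each conditional expectation is defined only up to a null set, and there are uncountably many admissible test functions, so neither $S$ nor $T$ is obviously measurable. I would handle this by fixing once and for all the version of the conditional expectation given by the regular conditional distribution $\nu$ (valid for all $f$ simultaneously thanks to independence), and by reducing both suprema to suprema over a fixed countable family dense in the unit ball of $\mathcal L$ for uniform convergence on compact sets. This reduction is legitimate because, under Assumption (A), $\nu$ has a finite first moment and the empirical measure $\mathbb{E}_{\Zb^n}$ charges finitely many points, so the functional $g\mapsto \mathbb{E}[g(Z)] - \mathbb{E}_{\Zb^n}(g)$ is continuous for this topology; this is exactly the separability underlying the Kantorovich--Rubinstein representation of $T$ as a Wasserstein-$1$ distance.
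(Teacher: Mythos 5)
Your proof is correct and follows essentially the same route as the paper's: one inequality by observing that deterministic $1$-Lipschitz functions are admissible competitors (using independence to identify $\mathbb{E}[g(Z)\,|\,\Zb^{1:\infty}]$ with $\mathbb{E}[g(Z)]$), and the reverse inequality by freezing the realization of $\Zb^{1:\infty}$ so that the random test function becomes a deterministic $1$-Lipschitz function dominated by the deterministic supremum. The additional care you take with measurability --- fixing the version of the conditional expectation via the regular conditional distribution and reducing to a countable dense family of Lipschitz functions --- is a refinement the paper glosses over, but it does not change the underlying argument.
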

 
\begin{proof}
On the one hand, it is obvious to check that
$$
\mathop{\sup}_{f\in \mathcal L; \|f\|_{\mathcal L} \leq 1} \left| \mathbb{E}\left[ f(Z)\right] - 
 \mathbb{E}_{\overline{Z}^n}(f)\right| \leq  \mathop{\sup}_{\begin{array}{c}
                 f \; \Zb^{1:\infty}\mbox{-measurable random function}\\
                 \mbox{such that } \|f\|_{\mathcal L} \leq 1 \mbox{ almost surely}\\
                \end{array}
} \left| \mathbb{E}\left[ f(Z) |\Zb^{1:\infty}\right] - \mathbb{E}_{\overline{Z}^n}(f)\right|.
$$

On the other hand, for any  $\Zb^{1:\infty}$-measurable random function $f$ such that $\|f\|_{\mathcal L} \leq 1$ almost surely, it holds that, almost surely, since $\Zb^{1:\infty}$ is independent of $Z$, 
$\mathbb{E}[f(Z) | \Zb^{1:\infty}] = \mathbb E_Z[ f(Z)]$ where the index $Z$ in $\mathbb{E}_Z$ indicates that the expectation is only taken with respect to $Z$, and thus
$$
\left| \mathbb{E}\left[ f(Z) |\Zb^{1:\infty}\right] - \mathbb{E}_{\overline{Z}^n}(f)\right| \leq \mathop{\sup}_{f\in \mathcal L; \|f\|_{\mathcal L} \leq 1} \left| \mathbb{E}\left[ f(Z)\right] - 
 \mathbb{E}_{\overline{Z}^n}(f)\right|.
$$
Hence the result. 
\end{proof}

We start by considering the case of the initialization of the MC-greedy algorithm.
\begin{lemma}\label{lem:first}
Let $0<\gamma<1$. Then, it holds that almost surely,
\begin{equation}\label{eq:est}
\mathbb{P}\left[ {\rm Var}\left[g_{\mub_1}(Z) \left|\Zb^{1:\infty}\right.\right] \geq \gamma^2 \mathop{\max}_{\mu \in \mathcal P} {\rm Var}\left[g_{\mu}(Z) \left| \Zb^{1:\infty}\right.\right]\right] \geq 1-\delta_1. 
\end{equation}
As a consequence, $\mathbb{P}\left[ \mathcal G_1 \right] \geq 1-\delta_1$ and (\ref{eq:firstrel}) holds for $n=1$. 
% holds with probability greater than $1- \delta_1$ where 
% $$
% \delta_1 \leq C e^{-c M_1 \phi\left(\kappa_0\right)}.
% $$
% with 
% $$
% \kappa_0:=\min\left(1, \frac{\left(1 - \gamma^2\right)  \sigh_0(\mathcal M)^2 }{8 K_\infty K_{\mathcal L} }\right).
% $$
% Equivalently, 
% $$
% \mathbb{P}\left[\mathcal G_1 \right] \geq 1-\delta_1.
% $$
\end{lemma}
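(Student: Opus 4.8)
The plan is to reduce the statement to a single high-probability event on which the empirical variances computed from $\overline{Z}^1$ are uniformly close to the true variances, and then to exploit that $\mub_1$ maximizes the empirical variance. First I would reduce the notation: since $\mub_1$ is $\overline{Z}^1$-measurable and $\overline{Z}^{1:\infty}$ is independent of $Z$, the conditional variances collapse to deterministic-support objects, namely ${\rm Var}[g_{\mub_1}(Z)\,|\,\overline{Z}^{1:\infty}] = \|g_{\mub_1}\|^2$ and $\max_{\mu\in\mathcal P}{\rm Var}[g_\mu(Z)\,|\,\overline{Z}^{1:\infty}] = \max_{\mu}\|g_\mu\|^2 = \sigh_0(\mathcal M)^2$, the latter being deterministic. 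Thus the event in~(\ref{eq:est}) is $\{\|g_{\mub_1}\|^2 \geq \gamma^2\sigh_0(\mathcal M)^2\}$, which is $\overline{Z}^1$-measurable, so the outer probability is just the (unconditional) probability of this event and the ``almost surely'' prefix is vacuous here; it remains to bound this probability from below by $1-\delta_1$.

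The key deterministic estimate I would establish is a uniform control of $|{\rm Var}_{\overline{Z}^1}(g_\mu) - \|g_\mu\|^2|$ in terms of $\mathcal{T}_1(\overline{Z}^1)$. Because $g_\mu$ is centered, $\mathbb E[g_\mu(Z)]=0$, so ${\rm Var}_{\overline{Z}^1}(g_\mu) - \|g_\mu\|^2 = \big(\mathbb E_{\overline{Z}^1}(g_\mu^2)-\mathbb E[g_\mu^2]\big) - \mathbb E_{\overline{Z}^1}(g_\mu)^2$. Since $\|g_\mu\|_{L^\infty}\leq K_\infty$ and $\|g_\mu\|_{\mathcal L}\leq K_{\mathcal L}$ by (B2)--(B3), the function $g_\mu^2$ is globally Lipschitz with $\|g_\mu^2\|_{\mathcal L}\leq 2K_\infty K_{\mathcal L}$. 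Feeding $g_\mu^2$ and $g_\mu$ directly into the definition of $\mathcal{T}_1$ (these are deterministic functions, so no randomized version such as Lemma~\ref{lem:cent} is needed at $n=1$) yields $|\mathbb E_{\overline{Z}^1}(g_\mu^2)-\mathbb E[g_\mu^2]|\leq 2K_\infty K_{\mathcal L}\,\mathcal{T}_1(\overline{Z}^1)$ and $|\mathbb E_{\overline{Z}^1}(g_\mu)|\leq K_{\mathcal L}\,\mathcal{T}_1(\overline{Z}^1)$, hence
$$
\epsilon := \sup_{\mu\in\mathcal P}\big|{\rm Var}_{\overline{Z}^1}(g_\mu) - \|g_\mu\|^2\big| \;\leq\; 2K_\infty K_{\mathcal L}\,\mathcal{T}_1(\overline{Z}^1) + K_{\mathcal L}^2\,\mathcal{T}_1(\overline{Z}^1)^2 .
$$

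Next comes the argmax comparison. Taking $\mu^\ast$ with $\|g_{\mu^\ast}\|=\sigh_0(\mathcal M)$ and using that $\mub_1$ maximizes ${\rm Var}_{\overline{Z}^1}$, I would chain $\|g_{\mub_1}\|^2 \geq {\rm Var}_{\overline{Z}^1}(g_{\mub_1})-\epsilon \geq {\rm Var}_{\overline{Z}^1}(g_{\mu^\ast})-\epsilon \geq \|g_{\mu^\ast}\|^2 - 2\epsilon = \sigh_0(\mathcal M)^2 - 2\epsilon$, so it suffices to force $\epsilon < \tfrac{1}{2}(1-\gamma^2)\sigh_0(\mathcal M)^2$. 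For this I would use the concentration bound: since $M_1$ and $\kappa_0$ are deterministic and $M_1\geq -\ln(\delta_1/C)/(c\phi(\kappa_0))$, Corollary~\ref{cor:concentration} gives $\mathbb P[\mathcal{T}_1(\overline{Z}^1)\geq \kappa_0]\leq C e^{-cM_1\phi(\kappa_0)}\leq \delta_1$. On $\{\mathcal{T}_1(\overline{Z}^1)<\kappa_0\}$ with $\kappa_0 = \tfrac{(1-\gamma^2)\sigh_0(\mathcal M)^2}{8K_\infty K_{\mathcal L}}$, the linear term contributes $2K_\infty K_{\mathcal L}\kappa_0 = \tfrac14(1-\gamma^2)\sigh_0(\mathcal M)^2$.

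The main (and only delicate) point is to absorb the quadratic term $K_{\mathcal L}^2\mathcal{T}_1(\overline{Z}^1)^2$; this is where I expect the bookkeeping to require the crude a priori bound $\sigh_0(\mathcal M)^2=\max_\mu\|g_\mu\|^2\leq K_\infty^2$, which is exactly why the factor $8$ rather than $4$ appears in the definition of $\kappa_0$. Indeed $K_{\mathcal L}^2\kappa_0^2 = \tfrac{(1-\gamma^2)^2\sigh_0(\mathcal M)^4}{64K_\infty^2}\leq \tfrac14(1-\gamma^2)\sigh_0(\mathcal M)^2$ precisely because $(1-\gamma^2)\sigh_0(\mathcal M)^2\leq K_\infty^2$. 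Adding the two contributions gives $\epsilon < \tfrac12(1-\gamma^2)\sigh_0(\mathcal M)^2$ and hence $\|g_{\mub_1}\|^2 > \gamma^2\sigh_0(\mathcal M)^2$ on this event. Therefore $\{\mathcal{T}_1(\overline{Z}^1)<\kappa_0\}\subseteq\{\|g_{\mub_1}\|^2\geq\gamma^2\sigh_0(\mathcal M)^2\}$, so~(\ref{eq:est}) holds with probability at least $1-\delta_1$; recalling $\sigb_0(\mathcal M)=\|g_{\mub_1}\|$ and $\sigh_0(\mathcal M)=\max_\mu\|g_\mu\|$, this is precisely $\mathbb P[\mathcal G_1]\geq 1-\delta_1$ and~(\ref{eq:firstrel}) for $n=1$.
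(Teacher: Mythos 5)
Your proof is correct, and its skeleton is the same as the paper's: compare the empirical maximizer $\mub_1$ with an exact maximizer using the empirical optimality of $\mub_1$, control the empirical-versus-exact variance discrepancies through the statistic $\mathcal{T}_1(\overline{Z}^1)$ via $\|g_\mu^2\|_{\mathcal L}\leq 2K_\infty K_{\mathcal L}$, and invoke Corollary~\ref{cor:concentration} with the $\kappa_0$ of (\ref{eq:kappa0}). Two of your choices genuinely differ from the paper, and both are legitimate. (i) For the squared-mean part of the variance, the paper factors $\mathbb{E}_{\overline{Z}^1}(g)^2-\mathbb{E}\left[g(Z)\right]^2$ as a difference of squares and bounds the sum factor by $2K_\infty$, so its estimate stays \emph{linear} in $\mathcal{T}_1(\overline{Z}^1)$: four unit-Lipschitz deviations, each with prefactor $2K_\infty K_{\mathcal L}$, which is exactly where the constant $8K_\infty K_{\mathcal L}$ in $\kappa_0$ comes from. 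You instead use the centering $\mathbb{E}\left[g_\mu(Z)\right]=0$ to get the quadratic term $\mathbb{E}_{\overline{Z}^1}(g_\mu)^2\leq K_{\mathcal L}^2\,\mathcal{T}_1(\overline{Z}^1)^2$, which you then absorb using $\sigh_0(\mathcal M)^2\leq K_\infty^2$ (valid, since ${\rm Var}\left[g_\mu(Z)\right]=\mathbb{E}\left[g_\mu(Z)^2\right]\leq K_\infty^2$); this closes with slack, but note your guess about the provenance of the $8$ is not the paper's accounting — it is $2\times 4$ from the all-linear bound over the two parameters $\muh_1$ and $\mub_1$, not a cushion for a quadratic remainder. (ii) You dispense with Lemma~\ref{lem:cent} by proving the deviation bound uniformly over the deterministic family $\left\{g_\mu\right\}_{\mu\in\mathcal P}$ and only then evaluating it at the random $\mub_1\in\mathcal P$, whereas the paper bounds the deviation at the random parameter directly and needs Lemma~\ref{lem:cent} to pass from $\overline{Z}^{1:\infty}$-measurable random Lipschitz functions back to deterministic ones. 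That is a clean simplification at $n=1$ (the lemma remains indispensable at iterations $n\geq 2$, where the orthonormalized functions $\gb_i$ are genuinely random); in exchange, the paper's constants match $\kappa_0$ exactly while yours close with room to spare.
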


\begin{proof}
 %Let us first point out that ${\rm Var}\left[g_{\mub_1}(Z) \left|\Zb^{1:\infty}\right.\right] = {\rm Var}\left[g_{\mub_1}(Z) \left|\Zb^{1}\right.\right]$ since $\mub_1$ only depends on $\Zb^1$ and is independent on $\Zb^m$ for $m>1$. 
 Let $\muh_1 \in \mathcal P$ 
 such that $$
 \sigh_0(\mathcal M)^2= \mathop{\max}_{\mu \in \mathcal P} {\rm Var}\left[g_{\mu}(Z) \left| \Zb^{1:\infty}\right.\right] = {\rm Var}\left[g_{\muh_1}(Z)| \Zb^{1:\infty}\right].
 $$
 Inequality (\ref{eq:est}) holds provided that
 $$
\mathbb{P}\left[\left( {\rm Var}\left[g_{\muh_1}(Z)|\Zb^{1:\infty}\right] - {\rm Var}\left[g_{\mub_1}(Z) \left|\Zb^{1:\infty}\right.\right] \right)> \epsilon  \sigh_0(\mathcal M)^2  \right] \leq \delta_1,
 $$
 with $\epsilon:= \left(1 - \gamma^2\right)$.
Almost surely, since $\mub_1\in \mathop{{\rm argmax}}_{\mu \in \mathcal P} {\rm Var}_{\Zb^1}(g_\mu)$, it holds that
 \begin{align*}
 & {\rm Var}\left[g_{\muh_1}(Z)|\Zb^{1:\infty}\right] - {\rm Var}\left[g_{\mub_1}(Z) \left|\Zb^{1:\infty}\right.\right]\\
 &=   {\rm Var}\left[g_{\muh_1}(Z)|\Zb^{1:\infty}\right] - {\rm Var}_{\Zb^1}\left(g_{\muh_1}\right) +{\rm Var}_{\Zb^1}\left(g_{\muh_1}\right) - {\rm Var}_{\Zb^1}\left(g_{\mub_1}\right) + {\rm Var}_{\Zb^1}\left(g_{\mub_1}\right) - {\rm Var}\left[g_{\mub_1}(Z) \left|\Zb^{1:\infty}\right.\right] \\
  & \leq {\rm Var}\left[g_{\muh_1}(Z)|\Zb^{1:\infty}\right] - {\rm Var}_{\Zb^1}\left(g_{\muh_1}\right) + {\rm Var}_{\Zb^1}\left(g_{\mub_1}\right) - {\rm Var}\left[g_{\mub_1}(Z) \left|\Zb^{1:\infty}\right.\right] \\
  & = \mathbb{E}\left[|g_{\muh_1}|^2(Z) |\Zb^{1:\infty}\right] - \mathbb{E}_{\Zb^1}\left( |g_{\muh_1}|^2 \right) + \mathbb{E}_{\Zb^1}\left( g_{\muh_1}\right)^2 - \mathbb{E}\left[g_{\muh_1}(Z)\left| \Zb^{1:\infty}\right.\right]^2 \\
  &\quad -\mathbb{E}\left[|g_{\mub_1}|^2(Z)|\Zb^{1:\infty}\right] + \mathbb{E}_{\Zb^1}\left( |g_{\mub_1}|^2 \right) - \mathbb{E}_{\Zb^1}\left( g_{\mub_1}\right)^2 + \mathbb{E}\left[g_{\mub_1}(Z)|\Zb^{1:\infty}\right]^2 \\
  & \leq  \left|\mathbb{E}\left[|g_{\muh_1}|^2(Z)|\Zb^{1:\infty}\right] - \mathbb{E}_{\Zb^1}\left( |g_{\muh_1}|^2 \right)\right| + 2K_\infty \left|\mathbb{E}_{\Zb^1}\left( g_{\muh_1}\right) - \mathbb{E}\left[g_{\muh_1}(Z)\right]\right| \\
 & \quad + \left|\mathbb{E}\left[|g_{\mub_1}|^2(Z)|\Zb^{1:\infty}\right] - \mathbb{E}_{\Zb^1}\left( |g_{\mub_1}|^2 \right)\right| + 2K_\infty\left| \mathbb{E}_{\Zb^1}\left( g_{\mub_1}\right) - \mathbb{E}\left[g_{\mub_1}(Z)|\Zb^{1:\infty}\right]\right|,\\
 & \leq  2 K_\infty K_{\mathcal L}\\
 & \times \left( \left|\mathbb{E}\left[\frac{|g_{\muh_1}|^2}{2K_\infty K_{\mathcal L}}(Z)| \Zb^{1:\infty}\right] - \mathbb{E}_{\Zb^1}\left( \frac{|g_{\muh_1}|^2}{2K_\infty K_{\mathcal L}} \right)\right| 
 + \left|\mathbb{E}\left[\frac{|g_{\mub_1}|^2}{2K_\infty K_{\mathcal L}}(Z)| \Zb^{1:\infty}\right] - \mathbb{E}_{\Zb^1}\left( \frac{|g_{\mub_1}|^2}{2K_\infty K_{\mathcal L}} \right)\right|\right.\\
 &\quad \left. 
 + \left| \mathbb{E}_{\Zb^1}\left( \frac{g_{\mub_1}}{K_{\mathcal L}}\right) - \mathbb{E}\left[\frac{g_{\mub_1}}{K_\mathcal L}(Z)|\Zb^{1:\infty}\right]\right| +  \left| \mathbb{E}_{\Zb^1}\left( \frac{g_{\muh_1}}{K_{\mathcal L}}\right) - \mathbb{E}\left[\frac{g_{\muh_1}}{K_\mathcal L}(Z)|\Zb^{1:\infty}\right]\right|\right).
 \end{align*}
It holds that for all $\mu \in \mathcal P$, $\||g_{\mu}|^2 \|_{\mathcal L}\leq 2 K_\infty K_{\mathcal L}$. Indeed, for all $x,y\in\R^d$, we have
$$
| |g_{\mu}|^2(x) - |g_{\mu}|^2(y)| = |(g_{\mu}(x) + g_{\mu}(y))(g_{\mu}(x) - g_{\mu}(y))| \leq 2 K_\infty K_{\mathcal L}|x-y|.
$$
Thus, almost surely, it holds that
$$
{\rm Var}\left[g_{\muh_1}(Z)|\Zb^{1:\infty}\right] - {\rm Var}\left[g_{\mub_1}(Z) \left|\Zb^{1:\infty}\right.\right] \leq 8 K_\infty K_{\mathcal L} \sup_{f\in \mathcal L; \|f\|_{\mathcal L} \leq 1} \left|\mathbb{E}\left[f(Z)| \Zb^{1:\infty}\right] - \mathbb{E}_{\Zb^1}\left( f \right)\right|.
$$
Then, using Lemma~\ref{lem:cent}, we obtain that, almost surely,
$$
{\rm Var}\left[g_{\muh_1}(Z)|\Zb^{1:\infty}\right] - {\rm Var}\left[g_{\mub_1}(Z) \left|\Zb^{1:\infty}\right.\right] \leq 8 K_\infty K_{\mathcal L} \sup_{f\in \mathcal L; \|f\|_{\mathcal L} \leq 1} \left|\mathbb{E}\left[f(Z)\right] - \mathbb{E}_{\Zb^1}\left( f \right)\right|.
$$
Thus, using Theorem~\ref{cor:concentration}, the assumption on $M_1$ and the definition of $\kappa_0$, we obtain that
$$
\mathbb{P}\left[ \sup_{f \in \mathcal L; \|f\|_{\mathcal L} \leq 1} \left|\mathbb{E}\left[f(Z)\right] - \mathbb{E}_{\Zb^1}\left( f \right)\right| \geq \kappa_0 \right] \leq C e^{-c \phi(\kappa_0)} \leq \delta_1.
$$
Hence the desired result. 
\end{proof}

We now turn to the case of the $n^{th}$ iteration of the algorithm, with $n\geq 2$, that we analyze in the next two lemmas.
\begin{lemma}\label{lem:cov}
Let $n\geq 2$. Let us denote by 
% $$
% K_{\mathcal L}^{n-1}:= \max\left( K_{\mathcal L}, \|\gb_1\|_{\mathcal L}, \cdots, \|\gb_{n-1}\|_{\mathcal L}\right) \quad 
% \mbox{ and }\quad K_{\infty}^{n-1}:= \max\left( K_{\infty}, \|\gb_1\|_{\infty}, \cdots, \|\gb_{n-1}\|_{\infty}\right),
% $$
% and let us introduce 
\begin{equation}\label{eq:defMn}
\mathcal{M}^{n-1} := \mathcal M \cup \{ \gb_1, \cdots, \gb_{n-1}\}.
\end{equation}
Then, for all $\epsilon>0$, it holds that, almost surely, 
$$
\mathbb{P}\left[\left. \sup_{g,h \in \mathcal{M}^{n-1}} \left| {\rm Cov}\left[g(Z), h(Z)\left|\Zb^{1:\infty}\right.\right] - {\rm Cov}_{\overline{Z}^n}\left(g, h\right)\right| \geq \epsilon \right| \Zb^{1:(n-1)}\right] 
\leq Ce^{-c M_n \phi\left(\frac{\epsilon}{  6K_\infty^{n-1}   K_{\mathcal L}^{n-1} }\right)}, 
$$
where $K_{\mathcal L}^{n-1}$ and $K_{\infty}^{n-1}$ are defined by (\ref{eq:defKK}). 
\end{lemma}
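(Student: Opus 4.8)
The plan is to reduce the whole supremum to the single scalar deviation $\mathcal{T}_1(\Zb^n)=\sup_{f\in\mathcal L,\,\|f\|_{\mathcal L}\le 1}\left|\mathbb{E}[f(Z)]-\mathbb{E}_{\Zb^n}(f)\right|$ and then invoke Corollary~\ref{cor:concentration}. First I would fix two functions $g,h\in\mathcal{M}^{n-1}$ and expand both covariances, writing the difference as
$$\big(\mathbb{E}[gh(Z)|\Zb^{1:\infty}]-\mathbb{E}_{\Zb^n}(gh)\big)-\big(\mathbb{E}[g(Z)|\Zb^{1:\infty}]\mathbb{E}[h(Z)|\Zb^{1:\infty}]-\mathbb{E}_{\Zb^n}(g)\mathbb{E}_{\Zb^n}(h)\big),$$
and treat the two brackets separately. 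For the first bracket, the product $gh$ is Lipschitz with $\|gh\|_{\mathcal L}\le 2K_\infty^{n-1}K_{\mathcal L}^{n-1}$, by the same product estimate already used in the proof of Lemma~\ref{lem:first} (both factors are bounded by $K_\infty^{n-1}$ and Lipschitz with constant at most $K_{\mathcal L}^{n-1}$). For the second bracket, I would insert and subtract $\mathbb{E}[g(Z)|\Zb^{1:\infty}]\mathbb{E}_{\Zb^n}(h)$ so as to obtain a sum of two products, each a bounded factor (of size at most $K_\infty^{n-1}$) times a mean deviation of $g$ or of $h$, plus a cross term bounded by $K_\infty^{n-1}K_{\mathcal L}^{n-1}\mathcal{T}_1(\Zb^n)$.

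The crucial point is that every mean deviation $\left|\mathbb{E}[\psi(Z)|\Zb^{1:\infty}]-\mathbb{E}_{\Zb^n}(\psi)\right|$ appearing above, for $\psi\in\{g,h,gh\}$, becomes $\mathcal{T}_1(\Zb^n)$ once divided by the Lipschitz constant of $\psi$. Indeed $g$, $h$ and $gh$ are $\Zb^{1:(n-1)}$-measurable random functions, so after normalisation they are $\Zb^{1:\infty}$-measurable random Lipschitz functions with constant at most $1$ almost surely, and Lemma~\ref{lem:cent} is exactly what converts the supremum of such conditional deviations into $\mathcal{T}_1(\Zb^n)$. Collecting the terms and bounding every factor uniformly by $K_\infty^{n-1}$ and $K_{\mathcal L}^{n-1}$ (estimating the cross term crudely) yields, almost surely and uniformly over $g,h\in\mathcal{M}^{n-1}$,
$$\sup_{g,h\in\mathcal{M}^{n-1}}\big|{\rm Cov}[g(Z),h(Z)|\Zb^{1:\infty}]-{\rm Cov}_{\Zb^n}(g,h)\big|\le 6K_\infty^{n-1}K_{\mathcal L}^{n-1}\,\mathcal{T}_1(\Zb^n).$$
The uniformity over the (a priori infinite) set $\mathcal{M}^{n-1}$ costs nothing here, since $K_\infty^{n-1}$ and $K_{\mathcal L}^{n-1}$ dominate all its members and $\mathcal{T}_1(\Zb^n)$ already carries the supremum over all Lipschitz test functions.

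Finally I would condition on $\Zb^{1:(n-1)}$. The step requiring the most care is purely one of bookkeeping: $K_\infty^{n-1}$, $K_{\mathcal L}^{n-1}$, $M_n$ and the functions $\gb_1,\dots,\gb_{n-1}$ (hence the data-dependent set $\mathcal{M}^{n-1}$) are all $\Zb^{1:(n-1)}$-measurable, whereas $\Zb^n$ is independent of $\Zb^{1:(n-1)}$ and remains iid with law $\nu$. Thus, conditionally on $\Zb^{1:(n-1)}$, these quantities act as frozen deterministic constants while the conditional law of $\mathcal{T}_1(\Zb^n)$ coincides with its unconditional law. The displayed bound then shows that the event $\{\sup_{g,h}|\cdots|\ge\epsilon\}$ is contained in $\{\mathcal{T}_1(\Zb^n)\ge \epsilon/(6K_\infty^{n-1}K_{\mathcal L}^{n-1})\}$, and applying Corollary~\ref{cor:concentration} to $\Zb^n$ with $M=M_n$ and $\kappa=\epsilon/(6K_\infty^{n-1}K_{\mathcal L}^{n-1})$ gives the claimed conditional bound $Ce^{-cM_n\phi(\epsilon/(6K_\infty^{n-1}K_{\mathcal L}^{n-1}))}$. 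The main obstacle is therefore not the concentration inequality itself but justifying that the random normalising constants and the random set $\mathcal{M}^{n-1}$ may be frozen by conditioning on $\Zb^{1:(n-1)}$ without disturbing the independence of $\Zb^n$ from them — precisely the property that makes both Lemma~\ref{lem:cent} and Corollary~\ref{cor:concentration} applicable.
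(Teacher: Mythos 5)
Your proposal is correct and follows essentially the same route as the paper's own proof: the same decomposition of the covariance discrepancy into mean deviations of $g$, $h$ and $gh$, the same Lipschitz-product bound $\|gh\|_{\mathcal L}\leq 2K_\infty^{n-1}K_{\mathcal L}^{n-1}$, the reduction to $\mathcal{T}_1(\Zb^n)$ via Lemma~\ref{lem:cent}, and the conditional application of Corollary~\ref{cor:concentration} with $K_\infty^{n-1}$, $K_{\mathcal L}^{n-1}$ and $M_n$ frozen by $\Zb^{1:(n-1)}$-measurability. The only (harmless) difference is bookkeeping: your telescoping of $\mathbb{E}[g]\mathbb{E}[h]-\mathbb{E}_{\Zb^n}(g)\mathbb{E}_{\Zb^n}(h)$ actually yields the sharper constant $4K_\infty^{n-1}K_{\mathcal L}^{n-1}$ (no genuine cross term arises), which implies the stated bound with $6K_\infty^{n-1}K_{\mathcal L}^{n-1}$ since $\phi$ is increasing.
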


\begin{proof}
For all $g,h\in \mathcal{M}^{n-1}$, it holds that, almost surely,
\begin{align*}
 & \left| {\rm Cov}\left[g(Z), h(Z)\left|\Zb^{1:\infty}\right.\right] - {\rm Cov}_{\overline{Z}^n}\left( g, h\right)\right| \\
 & \leq \left| \mathbb{E}\left[g(Z)h(Z)\left|\Zb^{1:\infty}\right.\right] - \mathbb{E}_{\overline{Z}^n}\left(gh\right)\right|\\
 & \quad + K_\infty^{n-1} \left( \left| \mathbb{E}\left[g(Z)\left|\Zb^{1:\infty}\right.\right] - \mathbb{E}_{\overline{Z}^n}\left( g\right)\right| + \left| \mathbb{E}\left[h(Z)\left|\Zb^{1:\infty}\right.\right] 
 - \mathbb{E}_{\overline{Z}^n}\left( h\right)\right| \right)\\
 & \leq  2K_\infty^{n-1}   K_{\mathcal L}^{n-1} \left( \left| \mathbb{E}\left[\frac{gh}{2K_\infty^{n-1}   K_{\mathcal L}^{n-1}}(Z)\left|\Zb^{1:\infty}\right.\right] - 
 \mathbb{E}_{\overline{Z}^n}\left( \frac{gh}{2K_\infty^{n-1}   K_{\mathcal L}^{n-1}}\right)\right|\right.\\
 & \quad + \left. \left| \mathbb{E}\left[\frac{g}{2  K_{\mathcal L}^{n-1}}(Z)\left|\Zb^{1:\infty}\right.\right] - \mathbb{E}_{\overline{Z}^n}\left( \frac{g}{2  K_{\mathcal L}^{n-1}}\right)\right| + 
 \left| \mathbb{E}\left[\frac{h}{2  K_{\mathcal L}^{n-1}}(Z)\left|\Zb^{1:\infty}\right.\right] - \mathbb{E}_{\overline{Z}^n}\left( \frac{h}{2  K_{\mathcal L}^{n-1}}\right)\right| \right).\\ 
\end{align*}
For all $g,h\in \mathcal{M}^{n-1}$, it holds that 
$$
\left\| \frac{gh}{2K_\infty^{n-1}   K_{\mathcal L}^{n-1}} \right\|_{\mathcal L}\leq 1 \quad \mbox{ and } \left\| \frac{g}{2  K_{\mathcal L}^{n-1}} \right\|_{\mathcal L}\leq 1. 
$$
This implies that, almost surely, 
$$
\sup_{g,h \in \mathcal{M}^{n-1}}\left| {\rm Cov}\left[g(Z), h(Z)\left|\Zb^{1:\infty}\right.\right] - {\rm Cov}_{\overline{Z}^n}\left( g, h\right)\right| 
\leq 6K_\infty^{n-1}   K_{\mathcal L}^{n-1}  \sup_{f\in \mathcal L, \; \|f\|_{\mathcal L}\leq 1}  \left| \mathbb{E}\left[f(Z)\left|\Zb^{1:\infty}\right.\right] - \mathbb{E}_{\overline{Z}^n}\left( f\right)\right|.
$$
Using Lemma~\ref{lem:cent}, this yields that, almost surely, 
$$\sup_{g,h \in \mathcal{M}^{n-1}}\left| {\rm Cov}\left[g(Z), h(Z)\left|\Zb^{1:\infty}\right.\right] - {\rm Cov}_{\overline{Z}^n}\left( g, h\right)\right| 
\leq 6K_\infty^{n-1}   K_{\mathcal L}^{n-1}  \sup_{f\in \mathcal L, \; \|f\|_{\mathcal L}\leq 1}  \left| \mathbb{E}\left[f(Z)\right] - \mathbb{E}_{\overline{Z}^n}\left( f\right)\right|.
$$
We finally obtain, using Corollary~\ref{cor:concentration}, that
\begin{align*}
& \mathbb{P}\left[ \left. \sup_{g,h \in \mathcal{M}^{n-1}} \left| {\rm Cov}\left[g(Z),h(Z)\left|\Zb^{1:\infty}\right.\right] - {\rm Cov}_{\overline{Z}^n}\left( g, h\right)\right| > \epsilon \right| \Zb^{1:(n-1)}\right]  \\
& \leq \mathbb{P}\left[ \left. \sup_{f\in \mathcal L, \; \|f\|_{\mathcal L}\leq 1}  \left| \mathbb{E}\left[f(Z)\right] - \mathbb{E}_{\overline{Z}^n}\left( f\right)\right| > \frac{\epsilon}{  6K_\infty^{n-1}   K_{\mathcal L}^{n-1} }\right| \Zb^{1:(n-1)}\right]\\
& \leq \mathbb{P}\left[ \left. \sup_{f\in \mathcal L, \; \|f\|_{\mathcal L}\leq 1}  \left| \mathbb{E}\left[f(Z)\right] - \mathbb{E}_{\overline{Z}^n}\left( f\right)\right| >  \frac{\epsilon}{  6K_\infty^{n-1}   K_{\mathcal L}^{n-1} }\right| \Zb^{1:(n-1)}\right]\\
& \leq Ce^{-c M_n \phi\left(\frac{\epsilon}{  6K_\infty^{n-1}   K_{\mathcal L}^{n-1} }\right)}.\\
\end{align*}
Hence the result.
\end{proof}

\begin{lemma}\label{lem:nit}
Let $0<\gamma<1$ and $n\geq 2$. Then, it holds that almost surely
$$
\mathbb{P}\left[\mathcal{G}_n |\Zb^{1:(n-1)} \right]\geq 1-\delta_n.
$$
% where
% $$
% \delta_n \leq  Ce^{-c M_n \phi\left( \kappa_{n-1}\right)} \quad \mbox{ with } \quad \kappa_{n-1}:= \frac{\min\left( \frac{1}{2(n-1)}, 
% \quad \frac{(1-\gamma^2) \widehat{\sigma}_{n-1}(\mathcal M)}{n\left(9 K_2^2 + 4\right)}\right)}{6K_\infty^{n-1}   K_{\mathcal L}^{n-1}}.
% $$
\end{lemma}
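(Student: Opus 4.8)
The plan is to reduce the statement to a deterministic estimate holding on a single ``good'' event whose conditional probability is controlled by Lemma~\ref{lem:cov}. Throughout I would work conditionally on $\Zb^{1:(n-1)}$, so that the orthonormal family $\gb_1,\dots,\gb_{n-1}$, the set $\mathcal M^{n-1}$, the constants $K_\infty^{n-1},K_{\mathcal L}^{n-1},\sigh_{n-1}(\mathcal M)$ and the sample size $M_n$ are all deterministic, the only remaining randomness being $\Zb^n$. For $\mu\in\mathcal P$ introduce the true and empirical best-approximation errors onto $\Vb_{n-1}$,
$$
E_\mu := \inf_{\lambda\in\R^{n-1}}{\rm Var}\Big[g_\mu(Z) - \sum_{i=1}^{n-1}\lambda_i\gb_i(Z)\,\Big|\,\Zb^{1:\infty}\Big],\qquad \widehat E_\mu := \inf_{\lambda\in\R^{n-1}}{\rm Var}_{\Zb^n}\Big(g_\mu - \sum_{i=1}^{n-1}\lambda_i\gb_i\Big).
$$
Since $\Vb_{n-1}={\rm Span}\{\gb_1,\dots,\gb_{n-1}\}$, one has $\sigh_{n-1}(\mathcal M)^2=\max_\mu E_\mu$, $\sigb_{n-1}(\mathcal M)^2=E_{\mub_n}$ and $\mub_n\in\mathop{\rm argmax}_\mu\widehat E_\mu$. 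As both quantities are nonnegative, $\mathcal G_n$ is exactly the event $E_{\mub_n}\ge\gamma^2\max_\mu E_\mu$, and it suffices to show this holds on an event of conditional probability at least $1-\delta_n$.

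I would define the good event $A$ on which $\sup_{g,h\in\mathcal M^{n-1}}\big|{\rm Cov}[g(Z),h(Z)|\Zb^{1:\infty}]-{\rm Cov}_{\Zb^n}(g,h)\big|\le\epsilon$, where $\epsilon:=6K_\infty^{n-1}K_{\mathcal L}^{n-1}\kappa_{n-1}=\min\big(\tfrac{1}{2(n-1)},\ \tfrac{(1-\gamma^2)\sigh_{n-1}(\mathcal M)^2}{n(9K_2^2+4)}\big)$. By Lemma~\ref{lem:cov} together with hypothesis (\ref{eq:cond1}) on $M_n$, $\mathbb{P}[A^c\,|\,\Zb^{1:(n-1)}]\le Ce^{-cM_n\phi(\kappa_{n-1})}\le\delta_n$. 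Because the true Gram matrix of $(\gb_i)$ is the identity, on $A$ every ingredient of $E_\mu$ and $\widehat E_\mu$ is controlled uniformly in $\mu$: writing $c^\mu_i=\langle g_\mu,\gb_i\rangle$, $\widehat c^\mu_i={\rm Cov}_{\Zb^n}(g_\mu,\gb_i)$ and $\widehat G_{ij}={\rm Cov}_{\Zb^n}(\gb_i,\gb_j)$, one has $|\widehat G-I|\le\epsilon$ entrywise, $|\widehat c^\mu_i-c^\mu_i|\le\epsilon$ and $|{\rm Var}_{\Zb^n}(g_\mu)-\|g_\mu\|^2|\le\epsilon$.

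I would then close the argument by a three-step sandwich. Testing the infimum defining $\widehat E_\mu$ with the true coefficients $c^\mu$ and expanding the variance into covariances of elements of $\mathcal M^{n-1}$ gives $\widehat E_\mu-E_\mu\le\epsilon(1+\|c^\mu\|_1)^2$, where Bessel's inequality yields $\|c^\mu\|_2\le\|g_\mu\|\le K_2$ and hence $\|c^\mu\|_1\le\sqrt{n-1}\,K_2$, so that $\Delta^+:=\sup_\mu(\widehat E_\mu-E_\mu)$ is bounded by a quantity linear in $n$. Symmetrically, testing the infimum defining $E_\mu$ with the empirical minimizer $\widehat\lambda^\mu=\widehat G^{-1}\widehat c^\mu$ gives $E_\mu-\widehat E_\mu\le\epsilon(1+\|\widehat\lambda^\mu\|_1)^2$. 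With $\muh$ a maximizer of $E_\mu$, the optimality of $\mub_n$ for $\widehat E$ then gives
$$
E_{\mub_n}\ \ge\ \widehat E_{\mub_n}-\Delta^+\ \ge\ \widehat E_{\muh}-\Delta^+\ \ge\ E_{\muh}-\Delta^+-\Delta^-\ =\ \sigh_{n-1}(\mathcal M)^2-(\Delta^++\Delta^-),
$$
with $\Delta^-:=\sup_\mu(E_\mu-\widehat E_\mu)$, and it remains to prove $\Delta^++\Delta^-\le(1-\gamma^2)\sigh_{n-1}(\mathcal M)^2$.

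The main obstacle — and the reason both parts of the minimum in $\kappa_{n-1}$ are needed — is the control of $\Delta^-$, that is, of $\|\widehat\lambda^\mu\|_1$. The part $\epsilon\le\frac{1}{2(n-1)}$ forces $\|\widehat G-I\|_2\le(n-1)\epsilon\le\tfrac12$, so $\widehat G$ is invertible with $\|\widehat G^{-1}\|_2\le2$; it also gives $\|\widehat c^\mu-c^\mu\|_2\le\sqrt{n-1}\,\epsilon\le\tfrac12$, whence — and this is the point one must not miss — $\|\widehat c^\mu\|_2\le K_2+\tfrac12$ stays bounded independently of $n$ (the $\ell_2$ norm does not grow like $\sqrt n$, again by Bessel). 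Consequently $\|\widehat\lambda^\mu\|_2\le 2(K_2+\tfrac12)$ and $\|\widehat\lambda^\mu\|_1\le\sqrt{n-1}\,\|\widehat\lambda^\mu\|_2$, so $(1+\|\widehat\lambda^\mu\|_1)^2$ is again only linear in $n$. Carefully collecting the constants, one obtains $\Delta^++\Delta^-\le\epsilon\,n(9K_2^2+4)$, and the part $\epsilon\le\frac{(1-\gamma^2)\sigh_{n-1}(\mathcal M)^2}{n(9K_2^2+4)}$ of the minimum gives precisely $\Delta^++\Delta^-\le(1-\gamma^2)\sigh_{n-1}(\mathcal M)^2$. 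Hence on $A$ we get $E_{\mub_n}\ge\gamma^2\sigh_{n-1}(\mathcal M)^2$, i.e.\ $\mathcal G_n$ holds; since $\mathbb{P}[A\,|\,\Zb^{1:(n-1)}]\ge1-\delta_n$, this proves the lemma.
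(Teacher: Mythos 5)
Your proof is correct and follows essentially the same route as the paper: conditioning on $\Zb^{1:(n-1)}$, invoking Lemma~\ref{lem:cov} with the threshold $\epsilon = 6K_\infty^{n-1}K_{\mathcal L}^{n-1}\kappa_{n-1}$, and comparing true and empirical best-approximation errors via the Bessel bound $\|c^\mu\|_{\ell^2}\leq K_2$ for the true coefficients and the Gram-matrix perturbation bound (requiring $(n-1)\epsilon\leq 1/2$) for the empirical ones, arriving at the same constant $n(9K_2^2+4)$. Your symmetric $\Delta^+/\Delta^-$ sandwich is just a regrouping of the paper's five-term telescoping decomposition (your $\Delta^+$ combines its fourth and fifth terms, your $\Delta^-$ its first and second, and your use of the empirical optimality of $\mub_n$ is its third), so the two arguments are mathematically the same.
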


\begin{proof}
Since $(\gb_1, \cdots, \gb_n)$ forms a basis of $\Vb_{n-1}$, for all $\mu\in\mathcal P$, there exists one unique minimizer to 
$$
\mathop{\min}_{\left( \lambda_i\right)_{1\leq i \leq n-1}\in \R^{n-1} }  {\rm Var}\left[ \left. g_{\mu}(Z) - \sum_{i=1}^{n-1} \lambda_i \gb_i(Z)\right| \Zb^{1:\infty}\right].
$$
Let $\lambb^n:=\left( \lambb^n_i\right)_{1\leq i \leq n-1} \in \R^{n-1}$ be the unique minimizer of 
\begin{equation}\label{eq:lambbn}
\lambb^n: = \mathop{\rm argmin}_{\lambda:=(\lambda_i)_{1\leq i \leq n-1}}  {\rm Var}\left[ \left. g_{\mub_n}(Z) - \sum_{i=1}^{n-1} \lambda_i \gb_i(Z)\right| \Zb^{1:\infty}\right].
\end{equation}
As a consequence, it holds that 
$$
\sigb_{n-1}(\mathcal M) = {\rm Var}\left[ \left. g_{\overline{\mu}_n}(Z) - \sum_{i=1}^{n-1} \lambb^n_i \gb_i(Z)\right| \Zb^{1:\infty}\right]
$$
where $\sigb_{n-1}(\mathcal M)$ is defined by (\ref{eq:sigb}).

Let $\muh_n\in \mathcal P$ such that 
$$
\muh_n \in \mathop{\rm argmax}_{\mu\in \mathcal P} \mathop{\min}_{\left( \lambda_i\right)_{1\leq i \leq n-1}\in \R^{n-1} }  {\rm Var}\left[ \left. g_{\mu}(Z) - \sum_{i=1}^{n-1} \lambda_i \gb_i(Z)\right| \Zb^{1:\infty}\right],
$$
so that 
$$
\sigh_{n-1}(\mathcal M) = \mathop{\min}_{\left( \lambda_i\right)_{1\leq i \leq n-1}\in \R^{n-1} }  {\rm Var}\left[ \left. g_{\muh_n}(Z) - \sum_{i=1}^{n-1} \lambda_i \gb_i(Z)\right| \Zb^{1:\infty}\right],
$$
where $\sigh_{n-1}(\mathcal M)$ is defined in (\ref{eq:sigh}).

\medskip

Let $\lambh^n:=\left( \lambh^n_i\right)_{1\leq i \leq n-1} \in \R^{n-1}$ the unique minimizer of
\begin{equation}\label{eq:lambhn}
\lambh^n:=\mathop{\rm argmin}_{\lambda:=(\lambda_i)_{1\leq i \leq n-1}} {\rm Var}\left[ \left. g_{\muh_n}(Z)- \sum_{i=1}^{n-1} \lambda_i \gb_i(Z)\right| \Zb^{1:\infty}\right],
\end{equation}
so that
$$
\sigh_{n-1}(\mathcal M) = {\rm Var}\left[ \left. g_{\muh_n}(Z) - \sum_{i=1}^{n-1} \lambh^n_i \gb_i(Z)\right| \Zb^{1:\infty}\right].
$$

\medskip

The event $\mathcal G_n$ holds if and only if 
\begin{equation}\label{eq:ineq00}
 {\rm Var}\left[ \left. g_{\overline{\mu}_n}(Z) - \sum_{i=1}^{n-1} \lambb^n_i \gb_i(Z)\right| \Zb^{1:\infty}\right] \geq \gamma^2   {\rm Var}\left[ \left. g_{\muh_n}(Z) - \sum_{i=1}^{n-1} \lambh^n_i \gb_i(Z)\right| \Zb^{1:\infty}\right]. 
\end{equation}

Let us begin by pointing out that, since 
$$
{\rm Var}\left[ \left. g_{\muh_n}(Z) - \sum_{i=1}^{n-1} \lambh^n_i \gb_i(Z)\right| \Zb^{1:\infty}\right] \geq  
{\rm Var}\left[ \left. g_{\overline{\mu}_n}(Z) - \sum_{i=1}^{n-1} \lambb^n_i \gb_i(Z)\right| \Zb^{1:\infty}\right],
$$
if the inequality
\begin{align}\nonumber
 & {\rm Var}\left[ \left. g_{\muh_n}(Z) - \sum_{i=1}^{n-1} \lambh^n_i \gb_i(Z)\right| \Zb^{1:\infty}\right]  - 
 {\rm Var}\left[ \left. g_{\overline{\mu}_n}(Z) - \sum_{i=1}^{n-1} \lambb^n_i \gb_i(Z)\right| \Zb^{1:\infty}\right]  
 \\ \label{eq:ineq0}
 & \leq (1-\gamma^2) {\rm Var}\left[ \left. g_{\muh_n}(Z) - \sum_{i=1}^{n-1} \lambh^n_i \gb_i(Z)\right| \Zb^{1:\infty}\right]\\ \nonumber
\end{align}
holds, then (\ref{eq:ineq00}) is necessarily statisfied. The rest of the proof consists in estimating the probability that (\ref{eq:ineq0}) is realized. 

To this aim, as a first step, 
we are going to prove an upper bound on 
$$
{\rm Var}\left[ \left. g_{\muh_n}(Z) - \sum_{i=1}^{n-1} \lambh^n_i \gb_i(Z)\right| \Zb^{1:\infty}\right]  - 
 {\rm Var}\left[ \left. g_{\overline{\mu}_n}(Z) - \sum_{i=1}^{n-1} \lambb^n_i \gb_i(Z)\right| \Zb^{1:\infty}\right]
 $$
as a function of  
\begin{equation}\label{eq:eta}
\eta:= \sup_{g,h\in \mathcal{M}_{n-1}} \left| {\rm Cov}\left[ \left. g(Z), h(Z) \right| \Zb^{1:\infty}\right] - {\rm Cov}_{\overline{Z}^n}\left(  g, h \right)  \right|,
\end{equation}
which is the quantity estimated in Lemma~\ref{lem:cov}. 
More precisely, let us now prove that
  \begin{align}\nonumber
   &{\rm Var}\left[ \left. g_{\muh_n}(Z)- \sum_{i=1}^{n-1} \lambh^n_i \gb_i(Z)\right| \Zb^{1:\infty}\right] -  {\rm Var}\left[ \left. g_{\mub_n}(Z)- \sum_{i=1}^{n-1} \lambb^n_i \gb_i(Z)\right| \Zb^{1:\infty}\right] \\ \label{ineqcentr}
  & \leq n\left( 2 + \left(\frac{K_2 + \sqrt{n-1}\eta}{1-(n-1)\eta}\right)^2 + K_2^2\right)\eta.\\\nonumber
\end{align}

It holds that for all $1\leq i \leq n-1$, from (\ref{eq:lambbn}) and (\ref{eq:lambhn}), 
$$
\lambh^n_i={\rm Cov}\left[ g_{\muh_n}(Z), \gb_i(Z) |\Zb^{1:\infty}\right]  \quad \mbox{ and } \lambb^n_i={\rm Cov}\left[ g_{\mub_n}(Z), \gb_i(Z) |\Zb^{1:\infty}\right],
$$
and it then holds that, almost surely,
\begin{equation}\label{eq:K2}
\max\left( \|\lambh^n\|_{\ell^2}, \|\lambb^n\|_{\ell^2} \right) \leq \max\left( \|g_{\muh_n}\|, \|g_{\mub_n}\| \right) \leq K_2,
\end{equation}
where $\|\cdot\|_{\ell^2}$ denotes the Euclidean norm of $\R^{n-1}$. Let now $\lambh^{n,n}:=\left( \lambh^{n,n}_i\right)_{1\leq i \leq n-1} \in \R^{n-1}$ be a minimizer of
$$
\lambh^{n,n}:=\mathop{\rm argmin}_{\lambda:=(\lambda_i)_{1\leq i \leq n-1} \in \R^{n-1}} {\rm Var}_{\overline{Z}^n}\left(  g_{\muh_n}- \sum_{i=1}^{n-1} \lambda_i \gb_i\right),
$$
and $\lambb^{n,n}:=\left( \lambb^{n,n}_i\right)_{1\leq i \leq n-1} \in \R^{n-1}$ be a minimizer of 
$$
\lambb^{n,n}: = \mathop{\rm argmin}_{\lambda:=(\lambda_i)_{1\leq i \leq n-1}\in \R^{n-1}}  {\rm Var}_{\overline{Z}^n}\left(  g_{\mub_n} - \sum_{i=1}^{n-1} \lambda_i \gb_i\right)
$$
It then holds that for all $1\leq i \leq n-1$, $\lambh^{n,n}$ and $\lambb^{n,n}$ are solution to the linear systems
$$
A^n \lambh^{n,n} = \bh^n \quad \mbox{ and } A^n \lambb^{n,n} = \bb^n,
$$
where 
$A^n:=\left( A^n_{ij}\right)_{1\leq i,j \leq n-1}\in \R^{(n-1)\times (n-1)}$, $\bh^n:=\left(\bh^n_i\right)_{1\leq i\leq n-1},\bb^n:=\left(\bb^n_i\right)_{1\leq i\leq n-1}  \in \R^{n-1}$ are defined as follows: for all $1\leq i,j \leq n-1$, 
$$
A^n_{ij} = {\rm Cov}_{\overline{Z}^n}\left(\gb_i, \gb_j\right), \; \bh^n_i =  {\rm Cov}_{\overline{Z}^n}\left(g_{\muh_n}, \gb_i\right) \; \mbox{ and }\bb^n_i =  {\rm Cov}_{\overline{Z}^n}\left(g_{\mub_n}, \gb_i\right).
$$

Then, it holds that, almost surely, 
$$
\max_{1\leq i \leq n-1}\left(\left|\bh_i^n - \lambh_i^n\right|, \left|\bb^n_i- \lambb^n_i\right|\right) \leq \eta,
$$
which implies that
$$
\max\left( \|\bh_n\|_{\ell^2},\|\bb_n\|_{\ell^2}\right) \leq K_2 + \sqrt{n-1} \eta.
$$
Moreover, we have
$$
\max_{1\leq i,j \leq n-1} \left|A^n_{ij} - \delta_{ij} \right| \leq \eta,
$$
which yields that for all $\xi\in \R^{n-1}$,
$$
(1-(n-1)\eta) \|\xi\|_{\ell^2}^2 \leq \xi^TA^n\xi \leq (1+(n-1)\eta)\|\xi\|_{\ell^2}^2.
$$
Assume for now that $\eta(n-1) <1$, this implies that, for all $\xi\in \R^{n-1}$,
\begin{equation}\label{eq:borne}
\|(A^n)^{-1} \xi\|_{\ell^2} \leq \frac{1}{1-(n-1)\eta}\|\xi\|_{\ell^2}.
\end{equation}

Using (\ref{eq:borne}), we obtain that
\begin{equation}\label{eq:K22}
\max\left( \|\lambb^{n,n} \|_{\ell^2}, \|\lambh^{n,n}\|_{\ell^2} \right) \leq \frac{K_2 + \sqrt{n-1}\eta}{1-(n-1)\eta}.
\end{equation}

We then have,
\begin{align*}
 &{\rm Var}\left[ \left. g_{\muh_n}(Z)- \sum_{i=1}^{n-1} \lambh^n_i \gb_i(Z)\right| \Zb^{1:\infty}\right] -  {\rm Var}\left[ \left. g_{\mub_n}(Z)- \sum_{i=1}^{n-1} \lambb^n_i \gb_i(Z)\right| \Zb^{1:\infty}\right] \\
  & = {\rm Var}\left[ \left. g_{\muh_n}(Z)- \sum_{i=1}^{n-1} \lambh^n_i \gb_i(Z)\right| \Zb^{1:\infty}\right] - {\rm Var}\left[ \left. g_{\muh_n}(Z)- \sum_{i=1}^{n-1} \lambh^{n,n}_i \gb_i(Z)\right| \Zb^{1:\infty}\right] \\
 & \quad + {\rm Var}\left[ \left. g_{\muh_n}(Z)- \sum_{i=1}^{n-1} \lambh^{n,n}_i \gb_i(Z)\right| \Zb^{1:\infty}\right] - {\rm Var}_{\overline{Z}^n}\left(  g_{\muh_n}- \sum_{i=1}^{n-1} \lambh^{n,n}_i \gb_i\right) \\
 & \quad + {\rm Var}_{\overline{Z}^n}\left( g_{\muh_n}- \sum_{i=1}^{n-1} \lambh^{n,n}_i \gb_i\right) - {\rm Var}_{\overline{Z}^n}\left(  g_{\mub_n}- \sum_{i=1}^{n-1} \lambb^{n,n}_i \gb_i\right) \\
 & \quad + {\rm Var}_{\overline{Z}^n}\left( g_{\mub_n}- \sum_{i=1}^{n-1} \lambb^{n,n}_i \gb_i\right) - {\rm Var}_{\overline{Z}^n}\left( g_{\mub_n}- \sum_{i=1}^{n-1} \lambb^{n}_i \gb_i\right) \\
  & \quad + {\rm Var}_{\overline{Z}^n}\left(  g_{\mub_n}- \sum_{i=1}^{n-1} \lambb^{n}_i \gb_i\right) - {\rm Var}\left[ \left. g_{\mub_n}(Z)- \sum_{i=1}^{n-1} \lambb^n_i \gb_i(Z)\right| \Zb^{1:\infty}\right].\\
  \end{align*}
  Using the fact that 
  \begin{align*}
  &{\rm Var}\left[ \left. g_{\muh_n}(Z)- \sum_{i=1}^{n-1} \lambh^n_i \gb_i(Z)\right| \Zb^{1:\infty}\right] - {\rm Var}\left[ \left. g_{\muh_n}(Z)- \sum_{i=1}^{n-1} \lambh^{n,n}_i \gb_i(Z)\right| \Zb^{1:\infty}\right] \leq 0,\\  
  & {\rm Var}_{\overline{Z}^n}\left( g_{\muh_n}- \sum_{i=1}^{n-1} \lambh^{n,n}_i \gb_i\right) - {\rm Var}_{\overline{Z}^n}\left(  g_{\mub_n}- \sum_{i=1}^{n-1} \lambb^{n,n}_i \gb_i\right) \leq 0,\\
  & {\rm Var}_{\overline{Z}^n}\left( g_{\mub_n}- \sum_{i=1}^{n-1} \lambb^{n,n}_i \gb_i\right) - {\rm Var}_{\overline{Z}^n}\left( g_{\mub_n}- \sum_{i=1}^{n-1} \lambb^{n}_i \gb_i\right) \leq 0,\\
  \end{align*}
from the definition of $\lambh^n$, $\lambh^{n,n}$, $\lambb^{n,n}$, $\mub_n$, we obtain that  
  \begin{align*}
   &{\rm Var}\left[ \left. g_{\muh_n}(Z)- \sum_{i=1}^{n-1} \lambh^n_i \gb_i(Z)\right| \Zb^{1:\infty}\right] -  {\rm Var}\left[ \left. g_{\mub_n}(Z)- \sum_{i=1}^{n-1} \lambb^n_i \gb_i(Z)\right| \Zb^{1:\infty}\right] \\
   & \leq {\rm Var}\left[ \left. g_{\muh_n}(Z)- \sum_{i=1}^{n-1} \lambh^{n,n}_i \gb_i(Z)\right| \Zb^{1:\infty}\right] - {\rm Var}_{\overline{Z}^n}\left(  g_{\muh_n}- \sum_{i=1}^{n-1} \lambh^{n,n}_i \gb_i\right) \\
   & \quad + {\rm Var}_{\overline{Z}^n}\left(  g_{\mub_n}- \sum_{i=1}^{n-1} \lambb^{n}_i \gb_i\right) - {\rm Var}\left[ \left. g_{\mub_n}(Z)- \sum_{i=1}^{n-1} \lambb^n_i \gb_i(Z)\right| \Zb^{1:\infty}\right], \\
   & =  {\rm Var}\left[ \left. g_{\muh_n}(Z)\right| \Zb^{1:\infty}\right] - {\rm Var}_{\overline{Z}^n}\left(  g_{\muh_n}\right) - 2\sum_{i=1}^{n-1} \lambh^{n,n}_i \left( {\rm Cov}\left[ \left. g_{\muh_n}(Z), \gb_i(Z) \right| \Zb^{1:\infty}\right] 
   -{\rm Cov}_{\overline{Z}^n}\left(  g_{\muh_n}, \gb_i \right)  \right) \\
 & \quad + \sum_{i,j=1}^{n-1} \lambh^{n,n}_i\lambh^{n,n}_j \left( {\rm Cov}\left[ \left. \gb_i(Z), \gb_j(Z) \right| \Zb^{1:\infty}\right]  -{\rm Cov}_{\overline{Z}^n}\left(\gb_i, \gb_j \right)  \right) \\
 & \quad + {\rm Var}_{\overline{Z}^n}\left(  g_{\mub_n}\right) - {\rm Var}\left[ \left. g_{\mub_n}(Z)\right| \Zb^{1:\infty}\right] - 2 \sum_{i=1}^{n-1}\lambb^{n}_i\left( {\rm Cov}_{\overline{Z}^n}\left(  g_{\mub_n}, \gb_i\right) -  {\rm Cov}\left[ \left. \gb_i(Z), \gb_j(Z)\right| \Zb^{1:\infty}\right] \right)\\
& \quad + \sum_{i,j=1}^{n-1}\lambb^{n}_i\lambb^{n}_j\left( {\rm Cov}_{\overline{Z}^n}\left( \gb_i, \gb_j\right) - {\rm Cov}\left[ \left. g_{\mub_n}(Z), \gb_i(Z)\right| \Zb^{1:\infty}\right] \right).\\
\end{align*}
Now, using the definition of $\mathcal{M}^{n-1}$ given in (\ref{eq:defMn}), we obtain that
\begin{align*}
   &{\rm Var}\left[ \left. g_{\muh_n}(Z)- \sum_{i=1}^{n-1} \lambh^n_i \gb_i(Z)\right| \Zb^{1:\infty}\right] -  {\rm Var}\left[ \left. g_{\mub_n}(Z)- \sum_{i=1}^{n-1} \lambb^n_i \gb_i(Z)\right| \Zb^{1:\infty}\right] \\
 & \leq \left(1 + 2\sum_{i=1}^{n-1}|\lambh^{n,n}_i| + \sum_{i,j=1}^{n-1} |\lambh^{n,n}_i||\lambh^{n,n}_j|  + 1 + 2\sum_{i=1}^{n-1}|\lambb^{n}_i| + \sum_{i,j=1}^{n-1} |\lambb^{n}_i||\lambb^{n}_j|\right) \\
 & \quad \times \sup_{g,h\in \mathcal{M}_{n-1}} \left| {\rm Cov}\left[ \left. g(Z), h(Z) \right| \Zb^{1:\infty}\right] - {\rm Cov}_{\overline{Z}^n}\left(  g, h \right)  \right|.\\
 \end{align*}
 Since $\displaystyle \mathop{\sup}_{g,h\in \mathcal{M}_{n-1}} \left| {\rm Cov}\left[ \left. g(Z), h(Z) \right| \Zb^{1:\infty}\right] - {\rm Cov}_{\overline{Z}^n}\left(  g, h \right)  \right|=\eta$, we then have, almost surely, 
 \begin{align*}
   &{\rm Var}\left[ \left. g_{\muh_n}(Z)- \sum_{i=1}^{n-1} \lambh^n_i \gb_i(Z)\right| \Zb^{1:\infty}\right] -  {\rm Var}\left[ \left. g_{\mub_n}(Z)- \sum_{i=1}^{n-1} \lambb^n_i \gb_i(Z)\right| \Zb^{1:\infty}\right] \\
 & \leq \left[ \left( 1 + \sum_{i=1}^{n-1}|\lambh^{n,n}_i|\right)^2  +  \left( 1 + \sum_{i=1}^{n-1}|\lambb^{n}_i|\right)^2\right] \sup_{g,h\in \mathcal{M}_{n-1}} \left| {\rm Cov}\left[ \left. g(Z), h(Z) \right| \Zb^{1:\infty}\right] - {\rm Cov}_{\overline{Z}^n}\left(  g, h \right)  \right|\\
 & \leq n\left( 2 + \sum_{i=1}^{n-1}|\lambh^{n,n}_i|^2 + \sum_{i=1}^{n-1}|\lambb^n_i|^2\right) \eta\\
 & \leq n\left( 2 +  \|\lambh^{n,n}\|_{\ell^2}^2 + \|\lambb^n\|_{\ell^2}^2\right)  \eta.\\
 \end{align*}
 Finally, using (\ref{eq:K2}) and (\ref{eq:K22}), we obtain (\ref{ineqcentr}), i.e. 
 $$
 \sigh_{n-1}(\mathcal M) - \sigb_{n-1}(\mathcal M) \leq n\left( 2 + \left(\frac{K_2 + \sqrt{n-1}\eta}{1-(n-1)\eta}\right)^2 + K_2^2\right)\eta.
 $$
%   \begin{align*}
%    &{\rm Var}\left[ \left. g_{\muh_n}(Z)- \sum_{i=1}^{n-1} \lambh^n_i \gb_i(Z)\right| \Zb^{1:\infty}\right] -  {\rm Var}\left[ \left. g_{\mub_n}(Z)- \sum_{i=1}^{n-1} \lambb^n_i \gb_i(Z)\right| \Zb^{1:\infty}\right] \\
%   & \leq n\left( 2 + \left(\frac{K_2 + \sqrt{n-1}\eta}{1-(n-1)\eta}\right)^2 + K_2^2\right)\eta.\\
% \end{align*}

\medskip

Let us now evaluate the probability, conditioned to the knowledge of $\Zb^{1:\infty}$, that 
\begin{equation}\label{eq:ineqaux}
n\left( 2 + \left(\frac{K_2 + \sqrt{n-1}\eta}{1-(n-1)\eta}\right)^2 + K_2^2\right)\eta \leq (1-\gamma^2) \widehat{\sigma}_{n-1}(\mathcal M).
\end{equation}
If $\eta$ is chosen to be smaller that $\frac{1}{2(n-1)}$, then it holds that 
$$
 2 + \left(\frac{K_2 + \sqrt{n-1}\eta}{1-(n-1)\eta}\right)^2 + K_2^2  \leq 2 + \left(2 K_2 + 1\right)^2 + K_2^2 \leq 9 K_2^2 + 4.
$$
A sufficient condition for (\ref{eq:ineqaux}) to hold is then to ensure that $\eta \leq \epsilon $ with
$$
\epsilon := \min\left( \frac{1}{2(n-1)}, \quad \frac{(1-\gamma^2) \widehat{\sigma}^2_{n-1}(\mathcal M)}{n\left(9 K_2^2 + 4\right)}\right),
$$
Then, it holds that 
\begin{align*}
 \mathbb{P}\left[ \mathcal G_n \left| \Zb^{1:\infty}\right. \right] & =  \mathbb{P}\left[ \sigb_{n-1}(\mathcal M)^2 \geq \gamma^2 \sigh_{n-1}(\mathcal M)^2  \left| \Zb^{1:\infty}\right. \right] \\
 & = \mathbb{P}\left[ \sigh_{n-1}(\mathcal M)^2 -\sigb_{n-1}(\mathcal M)^2  \leq (1- \gamma^2) \sigh_{n-1}(\mathcal M)^2  \left| \Zb^{1:\infty}\right. \right] \\
  & \geq \mathbb{P}\left[ n\left( 2 + \left(\frac{K_2 + \sqrt{n-1}\eta}{1-(n-1)\eta}\right)^2 + K_2^2\right)\eta \leq (1-\gamma^2) \widehat{\sigma}_{n-1}(\mathcal M) \left| \Zb^{1:\infty}\right. \right] \\
  & \geq \mathbb{P}\left[\eta \leq \epsilon\left| \Zb^{1:\infty}\right. \right]. \\
\end{align*}
Thus, using the definition of $\eta$ given by (\ref{eq:eta}) and applying Lemma~\ref{lem:cov}, we then obtain that
\begin{align*}
\mathbb{P}\left[\mathcal G_n |\Zb^{1:(n-1)} \right] & \geq \mathbb{P}\left[\eta \leq \epsilon\left| \Zb^{1:\infty}\right. \right] \\
 &=  \mathbb{P}\left[\left. \sup_{g,h \in \mathcal{M}^{n-1}} \left| {\rm Cov}\left[g(Z), h(Z)\left|\Zb^{1:\infty}\right.\right] - {\rm Cov}_{\overline{Z}^n}\left(g, h\right)\right| \leq \epsilon \right| \Zb^{1:(n-1)}\right]\\
 & \geq 1 -\delta_n,\\
\end{align*}
since 
$$
Ce^{-c M_n \phi\left( \kappa_{n-1}\right)} \leq \delta_n,
$$
with 
$$
\kappa_{n-1}:= \frac{\min\left( \frac{1}{2(n-1)}, \quad \frac{(1-\gamma^2) \widehat{\sigma}^2_{n-1}(\mathcal M)}{n\left(9 K_2^2 + 4\right)}\right)}{6K_\infty^{n-1}   K_{\mathcal L}^{n-1}},
$$
which yields the desired result.
\end{proof}

We are now in position to end the proof of Theorem~\ref{th:main}.
\begin{proof}[Proof of Theorem~\ref{th:main}]
Collecting Lemma~\ref{lem:first} and Lemma~\ref{lem:nit}, we obtain (\ref{eq:firstrel}) for all $n\in \mathbb{N}^*$. Let us now prove (\ref{eq:lastres}).

Let us first prove by recursion that for all $n\in \mathbb{N}^*$, 
\begin{equation}\label{eq:rec}
\mathbb{P}\left[ \bigcap_{k=1}^n \mathcal G_k \right] \geq \Pi_{k=1}^n (1-\delta_k). 
\end{equation}
Using Lemma~\ref{lem:first}, it holds that (\ref{eq:rec}) is true for $n=1$. Now we turn to the proof of the recursion.  
Let $n\in \mathbb{N}^*$. For any event $\mathcal Z$, we denote by $\un_{\mathcal Z}$ the random variable which is equal to $1$ if $\mathcal Z$ is realized and $0$ if not. 
Using the fact that $\bigcap_{k=1}^{n} \mathcal G_k$ is measurable with respect to $\overline{Z}^{1:n}$, it holds that 
\begin{align*}
 \mathbb{P}\left[ \bigcap_{k=1}^{n+1} \mathcal G_k \right] & = \mathbb{E}\left[ \un_{\bigcap_{k=1}^{n+1} \mathcal G_k}\right] \\
 & =  \mathbb{E}\left[ \mathbb{E}\left[ \un_{\mathcal G_{n+1}} \un_{\bigcap_{k=1}^{n} \mathcal G_k} \left| \overline{Z}^{1:n}\right.\right]\right] \\
 & = \mathbb{E}\left[ \mathbb{E}\left[ \un_{\mathcal G_{n+1}}  \left| \overline{Z}^{1:n}\right.\right] \un_{\bigcap_{k=1}^{n} \mathcal G_k}\right] \\
  & = \mathbb{E}\left[ \mathbb{P}\left[ \mathcal G_{n+1} \left| \overline{Z}^{1:n}\right.\right] \un_{\bigcap_{k=1}^{n} \mathcal G_k}\right]. \\
\end{align*}
Now using Lemma~\ref{lem:nit}, it holds that almost surely $\mathbb{P}\left[ \mathcal G_{n+1} \left| \overline{Z}^{1:n}\right.\right] \geq 1- \delta_{n+1}$. Hence, it holds that 
$$
 \mathbb{P}\left[ \bigcap_{k=1}^{n+1} \mathcal G_k \right] \geq (1- \delta_{n+1}) \mathbb{E}\left[\un_{\bigcap_{k=1}^{n} \mathcal G_k}\right] = (1- \delta_{n+1}) \mathbb{P}\left[\bigcap_{k=1}^{n} \mathcal G_k\right]. 
$$
The recursion is thus proved, together with (\ref{eq:rec}), which implies (\ref{eq:lastres}).

\medskip

If $\bigcap_{n\in\mathbb{N}} \mathcal G_n$ is realised, it then holds that the MC-greedy algorithm is a weak greedy algorithm with parameter $\gamma$ and norm $\|\cdot\|_{\Zb^{1:\infty}} = \sqrt{{\rm Var}\left[ \cdot | \overline{Z}^{1:\infty}\right]}$. 

\end{proof}

\section{Numerical results}\label{sec:resnum}

The aim of this section is to compare several procedures to choose the value of the sequence $(M_n)_{n\in \mathbb{N}^*}$ in the MC-greedy algorithm presented in Section~\ref{sec:presentation}. 

\subsection{Three numerical procedures}\label{sec:algos}

As mentioned in Remark~\ref{rem:main}, it is possible to design a constructive way to define a sequence of numbers of samples $(M_n)_{n\in\mathbb{N}^*}$ which 
satisfies assumptions of Theorem~\ref{th:main}, and thus which guarantees that the corresponding MC-greedy algorithm 
is a weak-greedy algorithm with high probability. Unfortunately, it appears that such a procedure cannot be used in practice to design a variance reduction method since the values of 
the sequence $(M_n)_{n\in \mathbb{N}^*}$ increases too sharply. The objective of this section is to propose a 
\itshape heuristic \normalfont procedure to choose a sequence $(M_n)_{n\in \mathbb{N}^*}$ for an MC-greedy algorithm. 
This heuristic procedure appears to yield a reduced basis approximation $\overline{f}_{\mu}$ of $f_\mu$ which provides very satisfactory results in terms of variance reduction, 
at least on the different test cases presented below. 

% From Remark~\ref{rem:rem1}, it is natural to terminate the procedure  at an iteration $n_{\rm fin}$ such that for all $\mu \in \mathcal P$, 
% $$
% \sqrt{\frac{{\rm Var}\left[ \overline{f}_\mu(Z)\right]}{M_{\rm ref}}} \approx \sqrt{\frac{{\rm Var}\left[ \left(f_\mu - \overline{f}_\mu\right)(Z)\right]}{M_{n_{\rm fin}}}}.
% $$
% 
% \medskip

We use here the same notation as in Section~\ref{sec:motiv} and consider $M_{\rm ref}\in \mathbb{N}^*$ such that $M_{\rm ref}\gg 1$. The idea of this heuristic method is the following: 
assume that the sequence $(M_n)_{n\in\mathbb{N}^*}$ can be chosen so that for all $n\in \mathbb{N}^*$, the inequality
\begin{align} \nonumber
& \left| \mathop{\inf}_{(\lambda_i)_{1\leq i \leq n-1}\in \mathbb{R}^{n-1}} {\rm Var}\left[\left. g_{\mu}(Z) - \sum_{i=1}^{n-1} \lambda_i \gb_i(Z)\right| \Zb^{1:\infty}\right] - \mathop{\inf}_{(\lambda_i)_{1\leq i \leq n-1}\in \mathbb{R}^{n-1}} {\rm Var}_{\overline{Z}^n}\left(g_{\mu} - \sum_{i=1}^{n-1} \lambda_i \gb_i\right)\right| \\ \label{eq:try}
& \leq (1- \gamma^2)  \mathop{\inf}_{(\lambda_i)_{1\leq i \leq n-1}\in \mathbb{R}^{n-1}} {\rm Var}\left[\left. g_{\mu}(Z) - \sum_{i=1}^{n-1} \lambda_i \gb_i(Z)\right| \Zb^{1:\infty}\right] \\\nonumber
\end{align}
holds for all $\mu \in \mathcal P$. Then, it can easily be checked that such an MC-greedy algorithm is a weak greedy algorithm with parameter $\gamma$. Of course, such an algorithm could not be of any use for variance reduction since 
it would imply the computation of $\displaystyle \mathop{\inf}_{(\lambda_i)_{1\leq i \leq n-1}\in \mathbb{R}^{n-1}} {\rm Var}\left[\left. g_{\mu}(Z) - \sum_{i=1}^{n-1} \lambda_i \gb_i(Z)\right| \Zb^{1:\infty}\right]$ (or an approximation of this quantity 
of the form $\displaystyle \mathop{\inf}_{(\lambda_i)_{1\leq i \leq n-1}\in \mathbb{R}^{n-1}} {\rm Var}_{\overline{Z}^{\rm ref}}\left( g_{\mu} - \sum_{i=1}^{n-1}
\lambda_i \gb_i\right)$ with $\overline{Z}^{\rm ref} = \left( Z^{\rm ref}_k\right)_{1\leq k \leq M^{\rm ref}}$ a collection of iid random variables with the same law as $Z$ and independent of $Z$) for all $\mu \in \mathcal P$. 

\medskip

The idea of the heuristic procedure is then to check if the inequality (\ref{eq:try}) holds, \itshape only for the value $\mu = \mub_n$\normalfont. If the inequality holds, the value of $M_{n+1}$ is chosen to be equal to $M_n$ for the next iteration. 
Otherwise, the value of $M_n$ is increased and the $n^{th}$ iteration of the MC-greedy algorithm is performed again.

\medskip

This heuristic procedure leads to the Heuristic MC-greedy algorithm (or HMC-greedy algorithm), see Algorithm~\ref{MCWGHeuristic}. Notice that we introduce here 
$\mathcal{P}_{\rm trial}$ a finite subset of $\mathcal P$, which is classically called the trial set of parameters in reduced basis methods. \medskip

For the sake of comparison, we introduce two other algorithms, which cannot be implemented in practice, but which will allow us to compare the performance of the HMC-greedy algorithm with ideal procedures. 
The first method, called SHMC-greedy algorithm and also presented in Algorithm~\ref{MCWGHeuristic} as a variant, consists in designing the sequence $(M_n)_{n\in \mathbb{N}^*}$ in order to ensure that the inequality 
(\ref{eq:try}) is satisfied for all $\mu \in \mathcal P_{\rm trial}$ (and not only for $\mub_n$). 
The second one consists in performing an \itshape ideal \normalfont MC-greedy algorithm, called herefater IMC-greedy algorithm, see Algorithm~\ref{MCWGIdeal}, where all the variances and expectations are 
evaluated using $M_{\rm ref}$ number of samples of the random variable $Z$ at each iteration of the MC-greedy algorithm.

\medskip

Let us comment on the termination criterion 
$$
\frac{{\rm Var}_{\Zb^{\rm ref}}\left( \overline{f}_{\mu^{(S)H}_{n-1}} \right)}{M_{\rm ref}} > \frac{{\rm Var}_{\Zb^{\rm ref}}\left( f_{\mu^{(S)H}_{n-1}} - \overline{f}_{\mu^{(S)H}_{n-1}} \right)}{M_{n-1}}
$$
introduced in line~11 of the (S)HMC-greedy algorithm. Recall that, for $\mu = \mu^{(S)H}_{n-1}$, the expectation $\mathbb{E}\left[f_{\mu^{(S)H}_{n-1}}(Z)\right]$ is approximated after $n-1$ iterations of
the greedy algorithm by the control variate 
formula (see (\ref{eq:controlvar}))
\begin{equation}\label{eq:approx}
\mathbb{E}_{\overline{Z}^{\rm ref}}(\overline{f}_{\mu^{(S)H}_{n-1}})+ \mathbb{E}_{\Zb^{n-1}}\left(f_{\mu^{(S)H}_{n-1}}-\overline{f}_{\mu^{(S)H}_{n-1}}\right).
\end{equation}
This criterion ensures that the iterative scheme ends when the statistical error associated with the second term in (\ref{eq:approx}) becomes smaller than the statistical error of the first term (see Remark~\ref{rem:rem1}).

\begin{algorithm}

\scriptsize
\SetKwInOut{Input}{input}\SetKwInOut{Output}{output}
%   \LinesNumbered
% \Initialization{}
\Input{$\gamma >0$, $\epsilon>0$, $M_1 \in \mathbb{N}^*$, ${\mathcal P}_{\rm trial}$ trial set of parameters (finite subset of $\mathcal P$), $M_{\rm ref}\in \mathbb{N}^*$ 
(high fidelity sampling number, which has a vocation to satisfy $M_{\rm ref} \gg M_1$. 
}

%$\gamma>0$ the weak greedy coefficient, $M_1$ sampling number, ${\rm {\rm P_{Trial}}}$ trial set of parameters, $M_{ref}$ High fidelity sampling number, $b$ expansion heuristic coefficient, $\epsilon_1$ stopping criterion} 

\Output{$N\in \mathbb{N}^*$ size of the reduced basis, $\mu_1^{(S)H}, \mu_2^{(S)H}, \cdots, \mu_N^{(S)H} \in \mathcal{P}_{\rm trial}$, $\left(\mathbb{E}_{\overline{Z}^{\rm ref}}(f_{\mu^{(S)H}_n})\right)_{1\leq n \leq N}$.}

\BlankLine

\nl Let $\overline{Z}^{\rm ref}:=\left( Z_k^{\rm ref}\right)_{1\leq k \leq M_{\rm ref}}$ be a collection of $M_{\rm ref}$ iid random variables with the same law as $Z$ and independent of $Z$.\label{line0h}\\

\nl Set $R^1=1$.\\
 
 \nl \While{$R^1 \geq 1 - \gamma^2$ \label{Loop2}}{ 
    \nl Let $\overline{Z}^1:=\left( Z_k^1\right)_{1\leq k \leq M_1}$ be a collection of $M_1$ iid random variables with the same law as $Z$ and independent of $Z$ and $\overline{Z}^{\rm ref}$.\label{line2h}\\
    \nl Compute $\displaystyle \mu^{(S)H}_1 \in \mathop{\rm argmax}_{\mu \in {\mathcal P_{\rm trial}}} {\rm Var}_{\overline{Z}^1}\left( f_\mu \right)$.\label{line3h}\\
    \nl Compute $\overline{f}_{\mu^{(S)H}_1} = 0$.\\
    \nl Compute $\mathbb{E}_{\overline{Z}^{\rm ref}}(f_{\mu^{(S)H}_1})$.\\ 
    \nl \begin{itemize}
         \item  \bfseries HMC case: \normalfont  Set $\theta^H_1(\mu_1^H) =  \sqrt{{\rm Var}_{\overline{Z}^{\rm ref}}\left( f_{\mu^H_1}\right)}$ and $\beta^H_1(\mu_1^H) = \sqrt{{\rm Var}_{\overline{Z}^1}\left( f_{\mu^H_1}\right)}$. 
          Set $R^1= \frac{\left|\theta^H_1(\mu_1^H)^2-\beta^H_1(\mu_1^H)^2\right|}{\theta^H_1(\mu_1^H)^2}$.
          
          \item \bfseries SHMC case: \normalfont Set $\theta^{SH}_1(\mu) =  \sqrt{{\rm Var}_{\overline{Z}^{\rm ref}}\left( f_{\mu}\right)}$ and $\beta^{SH}_1(\mu) = \sqrt{{\rm Var}_{\overline{Z}^1}\left( f_{\mu}\right)}$ for all $\mu \in \mathcal P_{\rm trial}$. 
          Set $R^1= \sup_{\mu \in \mathcal P_{\rm trial}} \frac{\left|\theta^{SH}_1(\mu)^2-\beta^{SH}_1(\mu)^2\right|}{\theta^{SH}_1(\mu)^2}$.
        \end{itemize}

    \nl    \If {$R^1\geq 1 - \gamma^2$ }{ 
    Set $b_1:= 1.1$ and $M_1= \left\lceil b_1M_1 + 1\right\rceil$. \label{line5h}}
}
%\nl Set $\sigb_1:= {\rm Var}_{\overline{Z}^1}\left( f_{\mu^H_1} \right)$ and compute $\mathbb{E}_{\overline{Z}^{\rm ref}}\left( f_{\mu^H_1}\right)$. \\
\nl Compute   $\gb^1 = \frac{f_{\mu^{(S)H}_1} - \mathbb{E}_{\overline{Z}^{\rm ref}}\left(f_{\mu^{(S)H}_1}\right) }{\theta^{(S)H}_1(\mu_1^{(S)H})}$. Set $n=2$ and $M_n=M_1$.\\
\nl \While{ $\frac{{\rm Var}_{\Zb^{\rm ref}}\left( \overline{f}_{\mu^{(S)H}_{n-1}} \right)}{M_{\rm ref}} \leq \frac{{\rm Var}_{\Zb^{\rm ref}}\left( f_{\mu^{(S)H}_{n-1}} - \overline{f}_{\mu^{(S)H}_{n-1}} \right)}{M_{n-1}}$}{\label{Loop1h}
  %\emph{Iteration $n$:}\\
  \nl \\
 \nl Set $R^n=1$.\\
  \nl \While{$R^n \geq 1 - \gamma^2$ \label{Loop2h}}{ 
     \nl Let $\overline{Z}^n:=\left( Z_k^n\right)_{1\leq k \leq M_n}$ be a collection of $M_n$ iid random variables with the same law as $Z$ and independent of $Z$ and $\overline{Z}^{\rm ref}$.\label{line8h}\\
    \nl Compute $\displaystyle \mu^{(S)H}_n \in \mathop{\rm argmax}_{\mu \in {\mathcal P_{\rm trial}} } \mathop{\rm min}_{(\lambda_i)_{1\leq i \leq n-1}\in \R^{n-1} } {\rm Var}_{\overline{Z}^n}\left( f_\mu - \sum_{i=1}^{n-1} \lambda_i \gb_{i}\right)$\label{line10h}\\
    \nl Compute $(\overline{\lambda}^n_i)_{1\leq i \leq n-1}= \mathop{\rm argmin}_{(\lambda_i)_{1\leq i \leq n-1}\in \R^{n-1} } \sqrt{{\rm Var}_{\overline{Z}^{\rm ref}}\left( f_{\mu^{(S)H}_n} - \sum_{i=1}^{n-1}  \lambda_i \gb_{i}\right)}$.\\
      \nl Compute $\overline{f}_{\mu^{(S)H}_n}= \sum_{i=1}^{n-1}  \overline{\lambda}^n_i \gb_{i}$.\\

    \begin{itemize}
          \item \bfseries HMC case: \normalfont Compute $\displaystyle \theta_n^H(\mu^H_n) = \mathop{\rm min}_{(\lambda_i)_{1\leq i \leq n-1}\in \R^{n-1} } \sqrt{{\rm Var}_{\overline{Z}^{\rm ref}}\left( f_{\mu^{(S)H}_n} - \sum_{i=1}^{n-1}  \lambda_i \gb_{i}\right)} = \sqrt{{\rm Var}_{\overline{Z}^{\rm ref}}\left( f_{\mu^{(S)H}_n} - \overline{f}_{\mu^{(S)H}_n}\right)}$ 
          and $\displaystyle \beta_n^H(\mu^H_n) = \mathop{\min}_{(\lambda_i)_{1\leq i \leq n-1}\in \R^{n-1} } \sqrt{{\rm Var}_{\overline{Z}^{n}}\left( f_{\mu^H_n} - \sum_{i=1}^{n-1}  \lambda_i \gb_{i}\right)}$. Set $\displaystyle R^n= \left|\frac{\theta_n^H(\mu_n^H)^2-\beta_n^H(\mu_n^H)^2}{\theta_n^H(\mu_n^H)^2}\right|.$ 
           \item \bfseries SHMC case: \normalfont For all $\mu \in \mathcal P_{\rm trial}$, compute $\displaystyle \theta_n^{SH}(\mu) = \mathop{\rm min}_{(\lambda_i)_{1\leq i \leq n-1}\in \R^{n-1} } \sqrt{{\rm Var}_{\overline{Z}^{\rm ref}}\left( f_{\mu} - \sum_{i=1}^{n-1}  \lambda_i \gb_{i}\right)}$ 
          and $\displaystyle \beta_n^{SH}(\mu) = \mathop{\min}_{(\lambda_i)_{1\leq i \leq n-1}\in \R^{n-1} } \sqrt{{\rm Var}_{\overline{Z}^{n}}\left( f_{\mu} - \sum_{i=1}^{n-1}  \lambda_i \gb_{i}\right)}$.
          Set $\displaystyle R^n= \mathop{\sup}_{\mu \in \mathcal P_{\rm trial}}\left|\frac{\theta_n^{SH}(\mu)^2-\beta_n^{SH}(\mu)^2}{\theta_n^{SH}(\mu)^2}\right|.$ 
        \end{itemize}

%    \nl Compute $\theta_n^H(\mu^H_n) = \mathop{\rm min}_{(\lambda_i)_{1\leq i \leq n-1}\in \R^{n-1} } \sqrt{{\rm Var}_{\overline{Z}^{\rm ref}}\left( f_{\mu^{(S)H}_n} - \sum_{i=1}^{n-1}  \lambda_i \gb_{i}\right)} = \sqrt{{\rm Var}_{\overline{Z}^{\rm ref}}\left( f_{\mu^{(S)H}_n} - \overline{f}_{\mu^{(S)H}_n}\right)}$\\
%    \nl Compute $\beta_n^H(\mu^H_n) = \mathop{\rm min}_{(\lambda_i)_{1\leq i \leq n-1}\in \R^{n-1} } \sqrt{{\rm Var}_{\overline{Z}^{n}}\left( f_{\mu^H_n} - \sum_{i=1}^{n-1}  \lambda_i \gb_{i}\right)}$\\
%\nl Set $\displaystyle R^n= \left|\frac{\theta_n^H(\mu_n^H)^2-\beta_n^H(\mu_n^H)^2}{\theta_n^H(\mu_n^H)^2}\right|$   \label{line13H}\\
        \nl \If {$R^n \geq 1 - \gamma^2$ }{
        Compute $r_n:= \frac{\phi(\theta_{n-1}^{(S)H}(\mu_{n-1}^{(S)H})^2 )}{\phi\left( \theta_n^{(S)H}(\mu_n^{(S)H})^2 \right)}$, $b_n:= \max(1.1, r_n)$ and set $M_n=\left\lceil b_n M_n +1\right\rceil$.}\label{line14H}
  }
    \nl Compute 
     $
     \displaystyle \gb_n = \frac{f_{\mu^{(S)H}_n} - \sum_{i=1}^{n-1}\lambb^n_i \gb_{i}}{\theta_n^{(S)H}(\mu_n^{(S)H})} = \frac{f_{\mu^{(S)H}_n} - \overline{f}_{\mu^{(S)H}_n}}{\theta_n^{(S)H}(\mu_n^{(S)H})}
     $
     and $\mathbb{E}_{\overline{Z}^{\rm ref}}(f_{\mu^{(S)H}_n})$.\\
     \nl Set $M_{n+1} = M_n$ and $n = n+1$.\\
    }
    Set $N=n$, $M_N=M_n$.\label{line17h}

\caption{(S)HMC-greedy algorithm}\label{MCWGHeuristic}
\end{algorithm}\DecMargin{1em}

\begin{algorithm}
\SetKwInOut{Input}{input}\SetKwInOut{Output}{output}
%   \LinesNumbered
% \Initialization{}
\Input{$\epsilon>0$, ${\mathcal P}_{\rm trial}$ trial set of parameters (finite subset of $\mathcal P$), $M_{\rm ref}\in \mathbb{N}^*$ 
(high fidelity sampling number). 
}

%$\gamma>0$ the weak greedy coefficient, $M_1$ sampling number, ${\rm {\rm P_{Trial}}}$ trial set of parameters, $M_{ref}$ High fidelity sampling number, $b$ expansion heuristic coefficient, $\epsilon_1$ stopping criterion} 

\Output{$N\in \mathbb{N}^*$ size of the reduced basis, $\mu^I_1, \mu^I_2, \cdots, \mu^I_N \in \mathcal{P}_{\rm trial}$, $\left(\mathbb{E}_{\overline{Z}^{\rm ref}}(f_{\mu^I_n})\right)_{1\leq n \leq N}$.}

\BlankLine

\nl Let $\overline{Z}^{\rm ref}:=\left( Z_k^{\rm ref}\right)_{1\leq k \leq M_{\rm ref}}$ be a collection of $M_{\rm ref}$ iid random variables with the same law as $Z$ and independent of $Z$.\label{line0h}\\

  \nl Compute $\displaystyle \mu^I_1 \in \mathop{\rm argmax}_{\mu \in {\mathcal P_{\rm trial}}} {\rm Var}_{\overline{Z}^{\rm ref}}\left( f_\mu \right)$.\label{line3h} \\
\nl Set $\theta^I_1(\mu_1^I):= {\rm Var}_{\overline{Z}^{\rm ref}}\left( f_{\mu^I_1} \right)$ and compute $\mathbb{E}_{\overline{Z}^{\rm ref}}\left( f_{\mu^I_1}\right)$. \\

\nl Set $n=2$, $M_n=M_1$, $\gb^1 = \frac{f_{\mu^I_1}}{\sqrt{{\rm Var}_{\overline{Z}^{\rm ref}}\left( f_{\mu^I_1}\right)}}$\\
\nl \While{ $\theta^I_{n-1}(\mu_{n-1}^I) \geq \epsilon$}{\label{Loop1h}
  %\emph{Iteration $n$:}\\
    \nl Compute $\displaystyle \mu^I_n \in \mathop{\rm argmax}_{\mu \in {\mathcal P_{\rm trial}} } \mathop{\rm min}_{(\lambda_i)_{1\leq i \leq n-1}\in \R^{n-1} } {\rm Var}_{\overline{Z}^{\rm ref}}\left( f_\mu - \sum_{i=1}^{n-1} \lambda_i \gb_{i}\right)$\label{line10h}\\
    \nl Compute $\theta^I_n(\mu_n^I) = \mathop{\rm min}_{(\lambda_i)_{1\leq i \leq n-1}\in \R^{n-1} } \sqrt{{\rm Var}_{\overline{Z}^{\rm ref}}\left( f_{\mu^I_n} - \sum_{i=1}^{n-1}  \lambda_i \gb_{i}\right)}$\\
    %\nl Compute $\Bar{\lambda}^{\mu_n} \in \mathop{\rm argmin}_{\lambda:=(\lambda_i)_{1\leq i \leq n-1}\in \R^{n-1} } {\rm Var}_{M_{ref}}\left( f_{\mu_n} - \sum_{i=1}^{n-1}  \lambda_i g_{i}\right)$\label{line12h}\\
    \nl Compute 
    $
    \displaystyle (\lambb^n_i)_{1\leq i \leq n-1} = \mathop{\rm argmin}_{(\lambda_i)_{1\leq i \leq n-1}\in \R^{n-1}}{\rm Var}_{\overline{Z}^{\rm ref}}\left( f_{\mu^I_n} - \sum_{i=1}^{n-1}\lambda_i \gb_{i}\right)
    $\\
    \nl Compute 
    $
    \displaystyle \gb_n = \frac{f_{\mu^I_n} - \sum_{i=1}^{n-1}\lambb^n_i \gb_{i}}{\theta^I_n(\mu_n^I)}
    $
    and $\mathbb{E}_{\overline{Z}^{\rm ref}}(f_{\mu^I_n})$. Set $n = n+1$.\\
    }
   
    Set $N=n$.\label{line17h}

\caption{IMC-greedy algorithm}\label{MCWGIdeal}
\end{algorithm}\DecMargin{1em}

\subsection{Definitions of quantities of interest}

For each of the test cases presented below, we plot different quantities of interest which we define here. 

For all $n\in \mathbb{N}^*$, we denote by $\mu_1^H, \cdots, \mu_n^H$ (respectively $\mu_1^{SH}, \cdots, \mu_n^{SH}$ and $\mu_1^I, \cdots, \mu_n^I$) the set of parameter values 
selected after $n$ iterations of the HMC-greedy (respectively SHMC-greedy and IMC-greedy) algorithm. We also denote by 
$\overline{V}_n^H:= {\rm Span}\left\{ g_{\mu_1^H}, \cdots, g_{\mu_n^H}\right\}$, $\overline{V}_n^{SH}:= {\rm Span}\left\{ g_{\mu_1^{SH}}, \cdots, g_{\mu_n^{SH}}\right\}$ and 
$\overline{V}_n^I:= {\rm Span}\left\{ g_{\mu_1^I}, \cdots, g_{\mu_n^I}\right\}$.

\medskip

For all $\mu \in \mathcal P$ and $n\in \mathbb{N}^*$, we define for the three algorithms presented in Section~\ref{sec:algos}, 
\begin{equation}\label{eq:thetamu}
\theta_n(\mu) := \mathop{\inf}_{(\lambda_i)_{1\leq i \leq n-1} \in \R^{n-1}}  \sqrt{{\rm Var}_{\overline{Z}^{\rm ref}}\left(g_\mu - \sum_{i=1}^{n-1} \lambda_i \gb_{i}\right)}
\end{equation}
and 
\begin{equation}\label{eq:theta}
\theta_n:= \mathop{\sup}_{\mu \in \mathcal P_{\rm trial}} \theta_n(\mu). 
\end{equation}
In what follows, we denote by $\theta^{H}_n(\mu)$ and $\theta^H_n$ (respectively by $\theta^{SH}_n(\mu)$, $\theta^{SH}_n$, $\theta^I_n(\mu)$ and $\theta^I_n$) the quantities defined by (\ref{eq:thetamu}) and (\ref{eq:theta}) 
obtained with the HMC-greedy (respectively the SHMC-greedy and IMC-greedy) algorithm.  Note that, by definition of the IMC-greedy algorithm, $\theta^I_n = \theta^I_n(\mu_n^I)$.

\medskip

A second quantity of interest for the HMC-greedy and the SHMC-greedy algorithms is given, for all $n\in \mathbb{N}^*$ and $\mu \in \mathcal P$, by
\begin{equation}\label{eq:betamu}
\beta_n(\mu):= \mathop{\inf}_{(\lambda_i)_{1\leq i \leq n-1} \in \R^{n-1}} \sqrt{ {\rm Var}_{\overline{Z}^n}\left(g_\mu - \sum_{i=1}^{n-1} \lambda_i \gb_{i}\right)}.
\end{equation}
and
\begin{equation}\label{eq:beta}
\beta_n:= \mathop{\sup}_{\mu \in \mathcal P_{\rm trial}} \beta_n(\mu).
\end{equation}
In the sequel, we denote by $\beta^{H}_n$ (respectively by $\beta^{SH}_n$) the quantity defined by (\ref{eq:beta}) obtained with the HMC-greedy (respectively the SHMC-greedy) algorithm.

Let us point out that when $\mathcal P_{\rm trial}= \mathcal P$ and when $M_{\rm ref} = \infty$, it holds that $\theta_n = \widehat{\sigma}_{n-1}(\mathcal M)$ and 
$\theta_n(\mub_n) = \overline{\sigma}_{n-1}(\mathcal M)$ where $\widehat{\sigma}_{n-1}(\mathcal M)$ and $\overline{\sigma}_{n-1}(\mathcal M)$ are defined respectively in (\ref{eq:sigh}) and (\ref{eq:sigb}).  

\medskip

We finally wish to evaluate the error made on the approximation of $\mathbb{E}[f_\mu(Z)]$ obtained by using the variance reduction method based on these MC-greedy algorithm. More precisely, this approximation is computed as
$$
 \sum_{i=1}^{n-1} \lambda_i^{n,\mu} \mathbb{E}_{\overline{Z}^{\rm ref}}(f_{\mub_i}) + \mathbb{E}_{\overline{Z}^{n}}\left(f_\mu-\sum_{i=1}^{n-1} \lambda_i^{n,\mu} f_{\mub_i}\right)
$$
where 
$$
\left( \lambda_i^{n,\mu} \right)_{1\leq i \leq n-1} = \mathop{\rm argmin}_{\left( \lambda_i\right)_{1\leq i \leq n-1}\in \mathbb{R}^{n-1}} {\rm Var}_{\overline{Z}^n}\left(f_\mu - \sum_{i=1}^{n-1}\lambda_i f_{\mub_i} \right)
$$
and $\mub_i$ is equal to $\mu_i^H$, $\mu_i^{SH}$ or $\mu_i^I$ depending on the chosen algorithm (remember formula~(\ref{eq:controlvar})). This quantity has to be compared with 
the approximation obtained with a standard Monte-Carlo with $M_{\rm ref}$ samples, i.e. $\mathbb{E}_{\overline{Z}^{\rm ref}}\left( f_\mu\right)$. 
To this aim, for all $n\in \mathbb{N}^*$ and $\mu \in \mathcal P$, 
we define
\begin{equation}\label{eq:err}
e_n(\mu):= \frac{\left|\mathbb{E}_{\overline{Z}^{\rm ref}}\left( f_\mu\right) - \left[ \sum_{i=1}^{n-1} \lambda_i^{n,\mu} \mathbb{E}_{\overline{Z}^{\rm ref}}(f_{\mub_i}) + \mathbb{E}_{\overline{Z}^{n}}\left(f_\mu-\sum_{i=1}^{n-1} \lambda_i^{n,\mu} f_{\mub_i}\right)\right]\right|}{\left| \mathbb{E}_{\overline{Z}^{\rm ref}}\left( f_\mu\right)\right|}.
\end{equation}
In what follows, we denote by $e_n^H(\mu)$ (respectively $e_n^{SH}(\mu)$) the quantity defined by (\ref{eq:err}) obtained by the HMC-greedy (respectively the SHMC-greedy) algorithm.

\subsection{Explicit one-dimensional functions}

We consider in this section two sets of one-dimensional explicit functions. The motivation for considering these two simple test cases is that the decays of the Kolmogorov $n$-widths of the associated sets $\mathcal M$ are known. 

\medskip

\subsubsection{First test case}

Let $f: \mathbb{R} \to \mathbb{R}$ be the function defined such that
\begin{equation}
\forall x\in \mathbb{R},\quad  f(x):=\left\{
      \begin{aligned}
        2\,x& \quad\text{if} \quad 0\leq x \leq 0.5,\\
        1\,&   \quad\text{if} \quad 0.5\leq x \leq 1.5,\\
        4-2\,x& \quad\text{if} \quad 1.5\leq x \leq 2,\\
        0 & \quad \text{otherwise}.\\
      \end{aligned}
    \right.
\end{equation}
Let $\cP=[0,3]$ be the set of parameter values. We consider in this first test case the family of functions $(f_\mu)_{\mu \in \mathcal P}$ such that $f_\mu(x)=f(x-\mu)$ for all $\mu$ in $\cP$ and $x\in \mathbb{R}$. 
Let $Z$ be a real-valued random variable with probability measure $\nu=\mathcal{U}(0,5)$. In this case, it is known~\cite{devore1998nonlinear} that there exists a constant $c>0$ such that $d_n(\mathcal M) \geq c  n^{-1/2}$ for all $n\in \mathbb{N}^*$. 

\medskip

In this example, $M_1=10$, $M_{\rm ref}=10^5$, $\gamma=0.9$ and the trial set $\mathcal P_{\rm trial}$ is chosen to be a set of $300$ random parameter values which were uniformly sampled in $\mathcal P$.

 Figure~\ref{fig:figM1} illustrates the evolution of the values of $M_n$ as a function of $n$ for the HMC and SHMC algorithms. 

 \begin{figure}[h]
 \begin{center}
    \includegraphics[width=12cm]{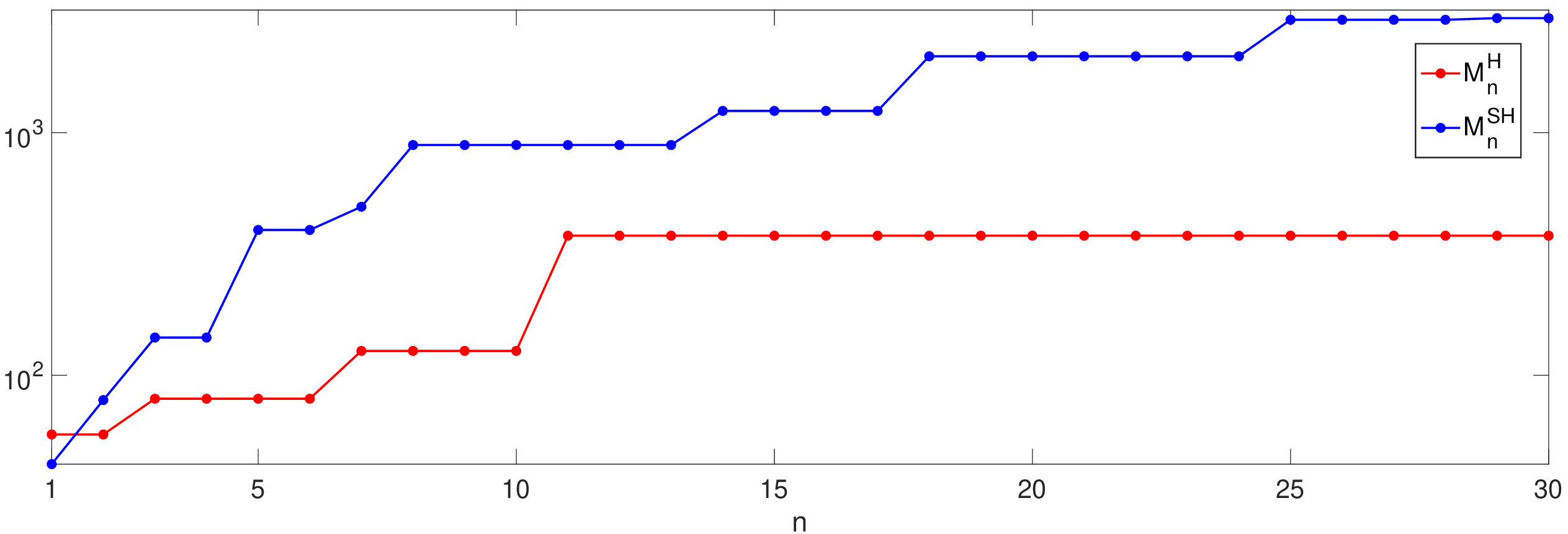}
    \caption
    {Evolution of $M_n$ as a function of $n$ for the HMC and SHMC-greedy algorithms in test case 1.} \label{fig:figM1}
\end{center}
    \end{figure}
    
    \begin{figure}[H]
\centering
   \includegraphics*[width = 12cm]{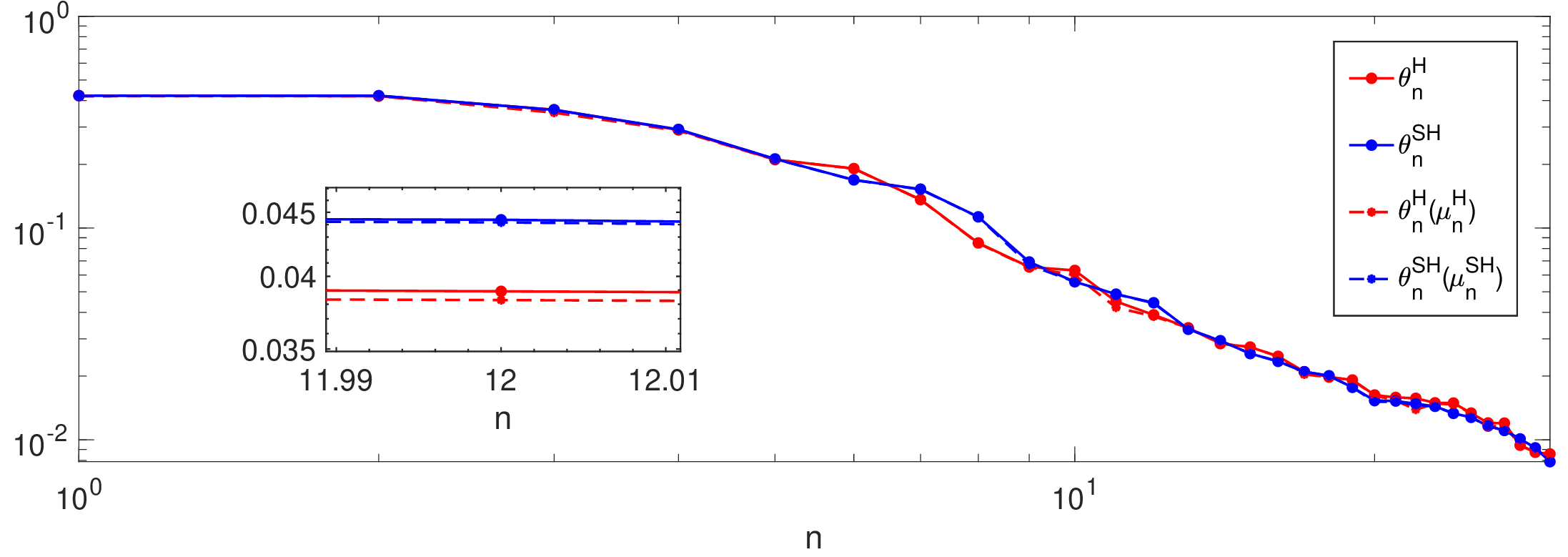}
   \includegraphics*[width=12cm]{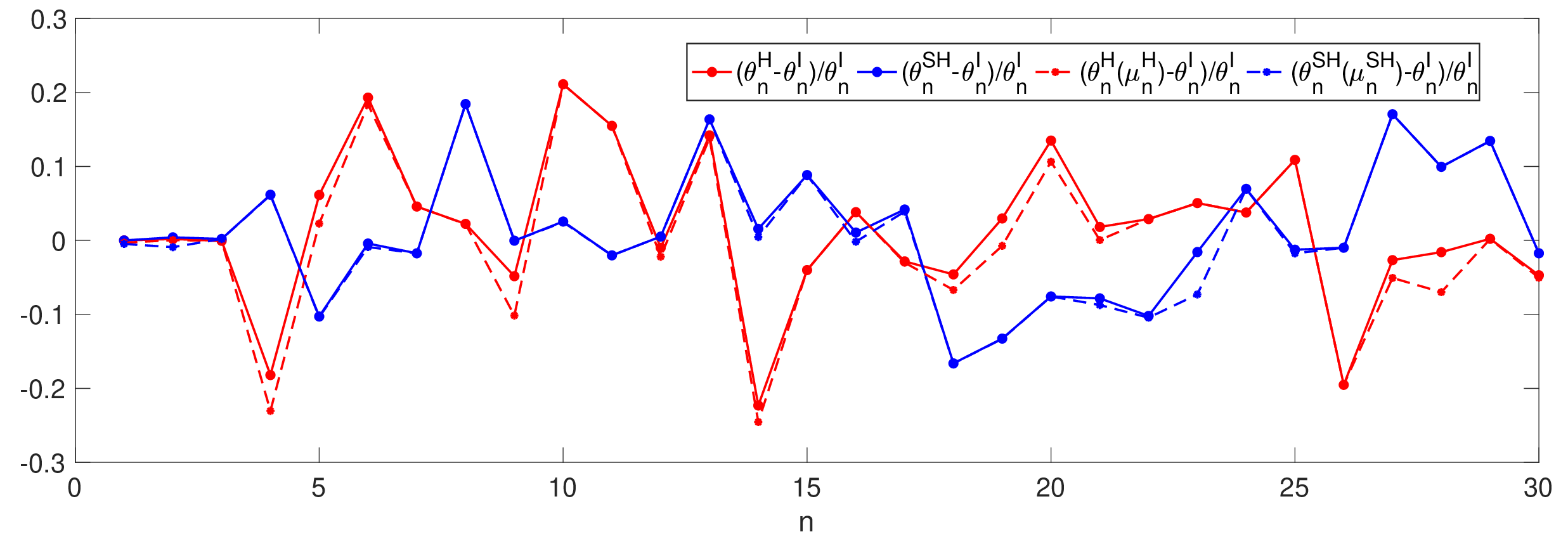}
     \caption{Evolution of $\theta^H_n(\mu_n^H)$, $\theta^{SH}_n(\mu_n^{SH})$, $\theta^H_n$, $\theta^{SH}_n$ as a function of $n$ in test case 1. }\label{Offline3Fex1}
\end{figure}

 Figure~\ref{Offline3Fex1} illustrates the fact that at each iteration $n\in \mathbb{N}^*$, for the (S)HMC-algorithm, the value of the selected parameter $\mu_n^{(S)H}$ is relevant since we observe numerically that 
 $\theta^{(S)H}(\mu_n^{(S)H})$ is very close to $\theta_n^{(S)H} = \sup_{\mu \in \mathcal P_{\rm trial}} \theta_n^{(S)H}(\mu)$. In addition, we observe that the resulting reduced spaces $\overline{V}_n^{(S)H}$ have 
 very good approximability properties 
 with respect to the set $\mathcal M$, in the sense that the values of $\theta_n^{(S)H}$ and $\theta_n^{(S)H}(\mu_n^{(S)H})$ are very close to $\theta_n^I$, which is computed with the reference IMC algorithm.

\begin{figure}[H]
\centering
   \includegraphics*[width =12cm]{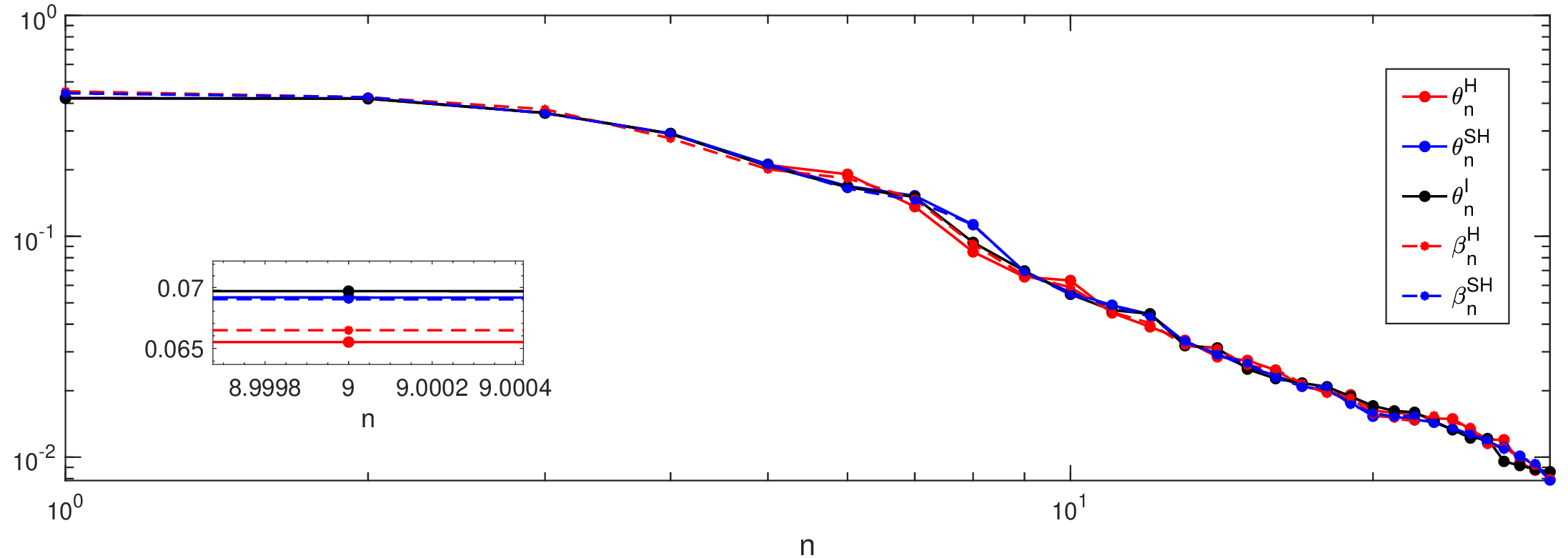}
   \includegraphics*[width=12cm]{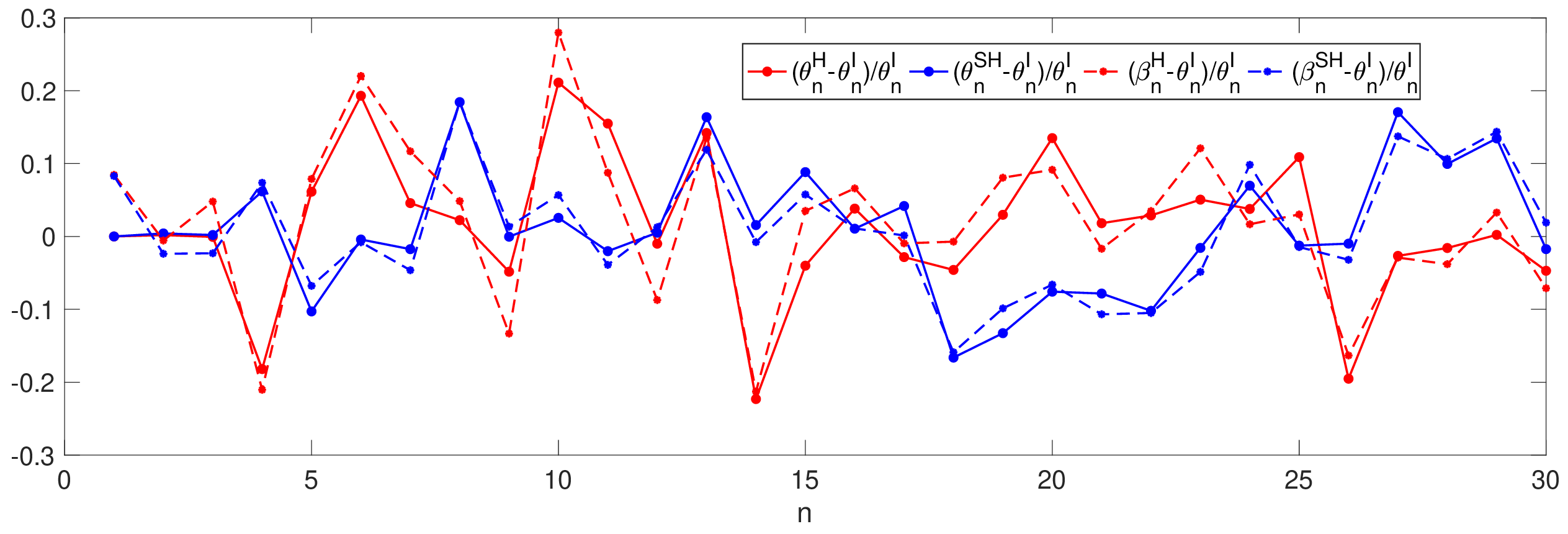}
     \caption{Evolution of $\beta^H_n$, $\beta^{SH}_n$, $\theta^H_n$, $\theta^{SH}_n$, $\theta^I_n$ as a function of $n$ in test case 1.}\label{Offline1Fex1}
\end{figure}

 Figure~\ref{Offline1Fex1} illustrates the fact that the value of the number of samples $M_n$ chosen at each iteration $n\in\mathbb{N}^*$ enables to compute empirical variances that are close to exact variances since 
 the values of $\beta_n^{(S)H}$ are very close to the $\theta_n^{(S)H}$ for the (S)HMC-algorithm. 
 
\begin{figure}[H]
\centering
   \includegraphics[width=12cm]{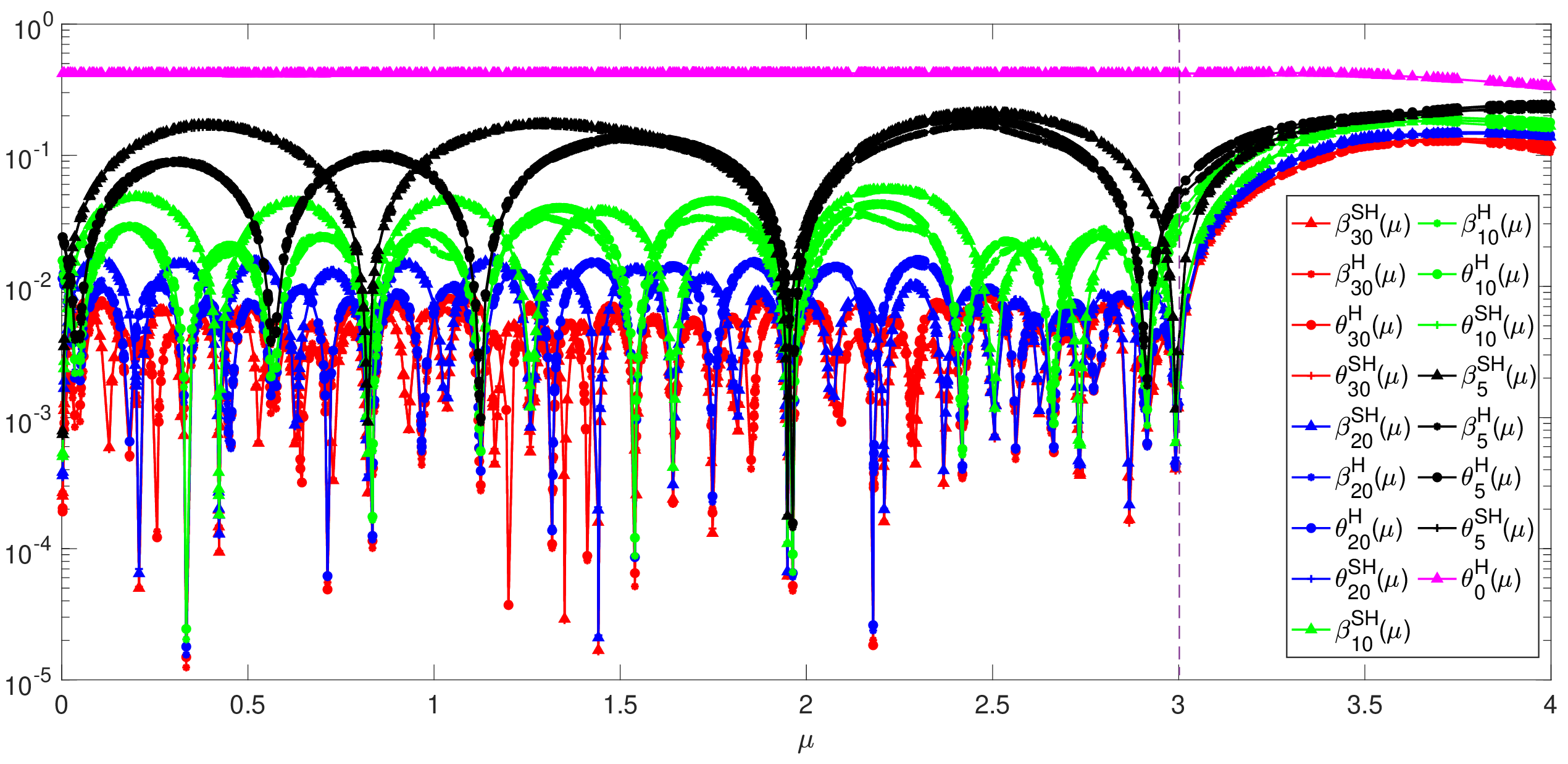}
     \caption{$\theta^H_n(\mu)$, $\theta_n^{SH}(\mu)$, $\beta_n^H(\mu)$, $\beta_n^{SH}(\mu)$ as a function of $\mu$ for $n=0, 5,10,20,30$ on $\cP_{test}=[0,4]$.}\label{Online1Fex1}
\end{figure}

In Figure~\ref{Online1Fex1}, the values of $\theta^{(S)H}_n(\mu)$ and $\beta_n^{(S)H}(\mu)$ are plotted as a function of $\mu \in \cP_{test}=[0,4]$ for different values of $n$ ($n=0, 5, 10,20,30$).

% \begin{figure}[H]
% \centering
%    \includegraphics[scale=0.35]{RmrefPtestNew3.eps}
%      \caption{$R_n^H(\mu)$, $R_n^{SH}(\mu)$ and $R_n^I(\mu)$ as a function of $\mu$ for $n=5$ and $n=8$.}\label{ErrEML}
% \end{figure}
% 

\begin{figure}[H]
\centering
   \includegraphics[width=12cm]{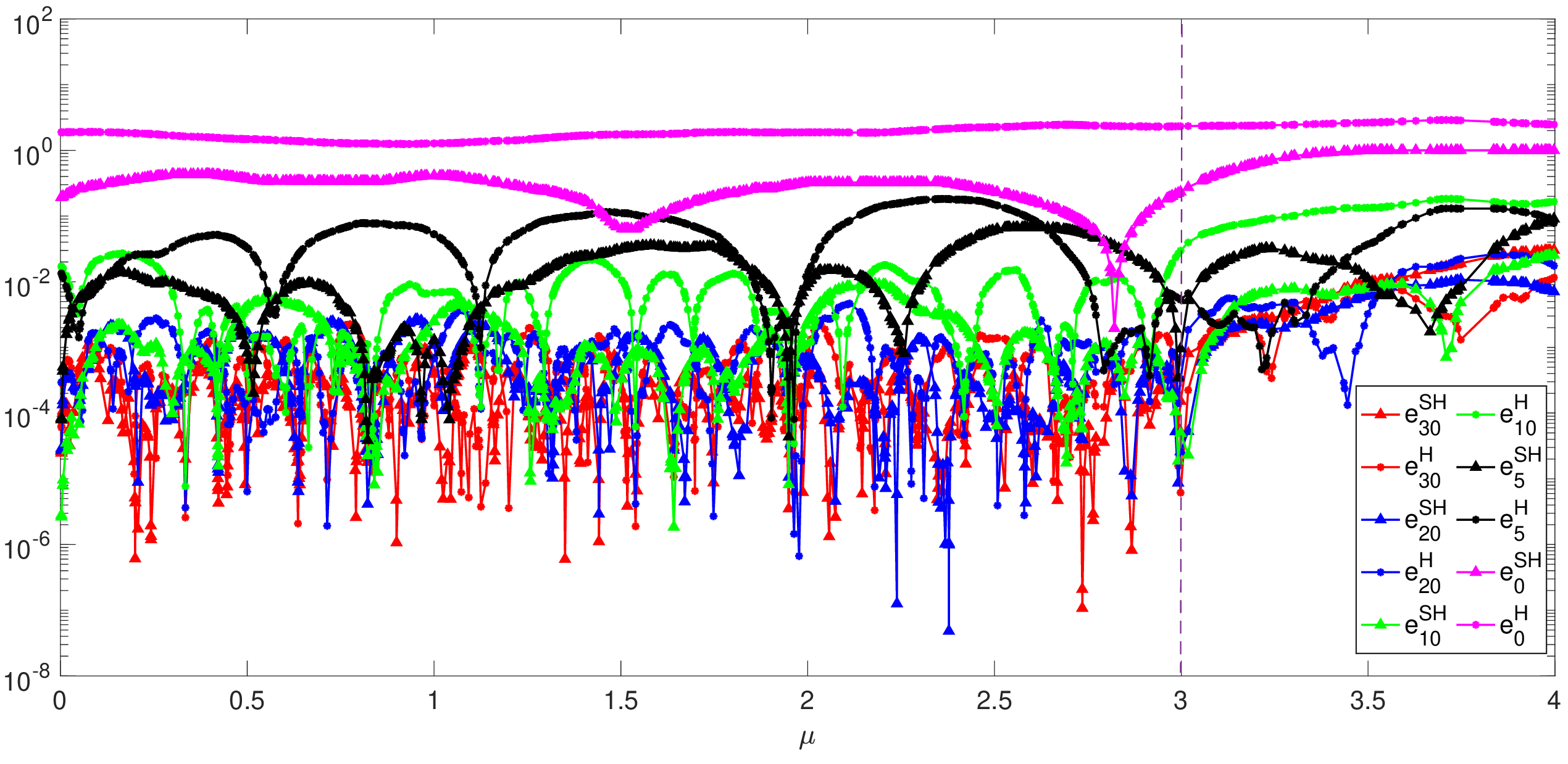}
     \caption{$e_n^H(\mu)$ and $e_n^{SH}(\mu)$ as a function of $\mu$ for $n=0,5,10,20,30$ on $\cP_{test}=[0,4]$.}\label{Online2Fex1}
\end{figure}

In comparison, in Figure~\ref{Online2Fex1}, the relative error $e_n^{(S)H}(\mu)$ is plotted as a function of $\mu$ for $n=0, 5, 10,20,30$. In particular, we observe that this error 
remains lower than $1\%$ as soon as $n \geq 10$ on $\mathcal P$. Naturally, this error is larger for $\mu \in \cP_{test}\setminus \cP$.

\begin{figure}[H]
\centering
   \includegraphics[width=12cm]{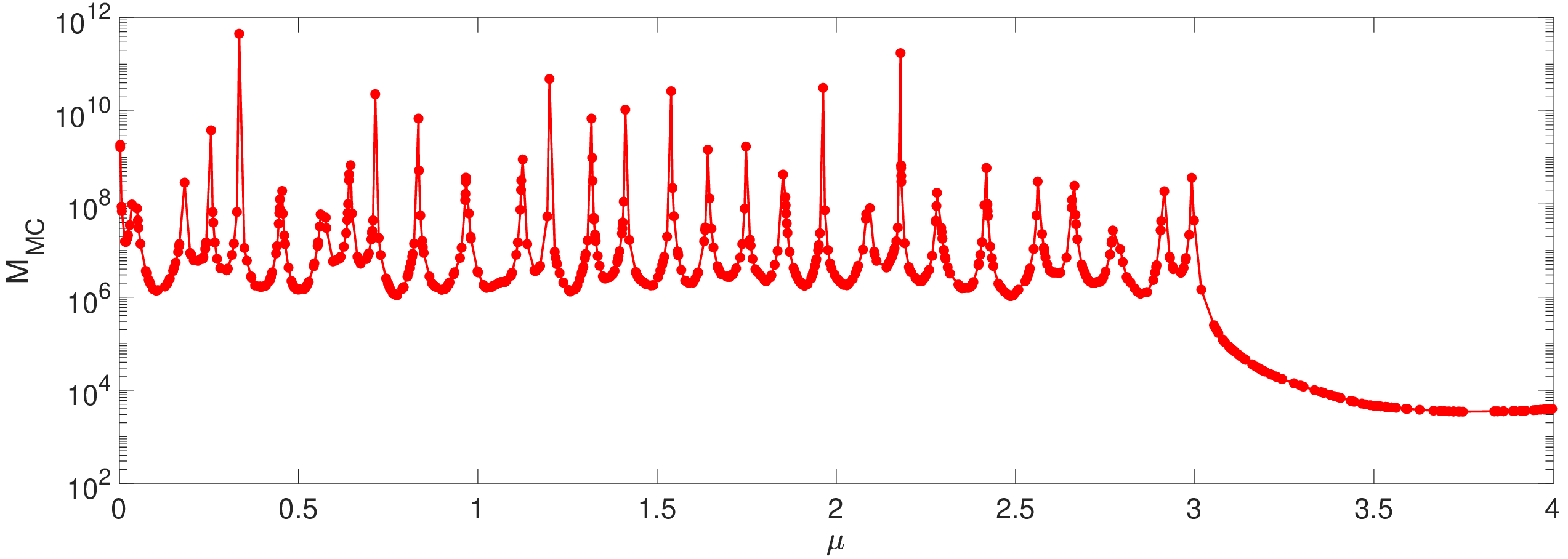}
     \caption{$M_{MC}(\mu)$ as a function of $\mu \in \cP_{test}=[0,4]$.  }\label{MrefGainfex1}
\end{figure}
Finally, to illustrate the gain of our proposed method in terms of variance reduction, we plot on Figure~\ref{MrefGainfex1} the value of the number of random Monte-Carlo samples $M_{MC}(\mu)$ that would have 
been necessary to compute an approximation of the mean of $f_\mu(Z)$ with a standard Monte-Carlo method with the same level of accuracy than the one given by the HMC-algorithm after $n=30$ iterations. 
In this case, let us point out that $M_n= 349$. More precisely, we compute $M_{MC}(\mu)$ by the follwoing formula: 
\begin{equation}\label{eq:MMC}
M_{MC}(\mu) = \frac{{\rm Var}_{\overline{Z}^{ref}}\left(f_{\mu}  \right)\times M_n}{{\rm Var}_{\overline{Z}^n}\left(f_{\mu} - \sum_{i=1}^{n}\lambda^{\mu}_i f_{\mub_i} \right)}. 
\end{equation}
Figure~\ref{MrefGainfex1} illustrates that, for all $\mu \in \mathcal P$, the classical Monte Carlo method would have required a number of samples $M_{MC}(\mu)$ in the range  
$10^6\leq M_{MC}(\mu)\leq 10^{12}$ in order to obtain the same level of statistical error. Thus, we see that the HMC-algorithm significantly improves the efficiency of the computation of the expectation of $f_\mu(Z)$ 
with respect to a standard Monte-Carlo algorithm.

\subsubsection{Second test case}

In this example, we consider a second family of one-dimensional functions where $\cP=[0,1]$ is the set of parameter values. 
More precisely, we consider the family of functions $(f_\mu)_{\mu \in \mathcal P}$ such that, for all $\mu$ in $\cP$:
\begin{equation}
\forall x\in [0,1],\quad   f_\mu(x):=\left\{
      \begin{aligned}
        \sqrt{x+0.1} \quad\text{if} \quad x \in [0, \mu]\\
        \frac{1}{2}(\mu+0.1)^{-\frac{1}{2}}x- \frac{1}{2}(\mu+0.1)^{-\frac{1}{2}}\mu +(\mu+0.1)^{\frac{1}{2}} \quad\text{if} \quad x \in [\mu, 1]\\
      \end{aligned}
    \right.
\end{equation}

Let us point out that for all $\mu \in \mathcal P$, $f_\mu$ is a $\mathcal C^1$ function on $[0,1]$. In this case, it is known~\cite{ehrlacher2020nonlinear} that there exists a constant $c>0$ such that $d_n(\mathcal M) \leq c  n^{-2}$ 
for all $n\in \mathbb{N}^*$. 

\medskip

Let $Z$ be a random variable with probability measure $\nu=\mathcal{U}(0,1)$.

\medskip

In this example, $M_1=10$, $M_{\rm ref}=10^5$, $\gamma=0.9$ and the trial set $\mathcal P_{\rm trial}$ is chosen to be a set of $300$ random parameter values which were uniformly sampled in $\mathcal P$. 
In this test case, we osbserve a similar behaviour of the (S)HMC-algorithm as in the first test case.

Figure~\ref{MrefGainfex2} illustrates the computational gain brought by the HMC algorithm after $n=70$ iterations (so that $M_n = 3109$)  with respect to the classical Monte Carlo method. 
Indeed, the quantity $M_{MC}(\mu)$ defined in (\ref{eq:MMC}) is observed to vary in this case between $10^{12}$ and $10^{18}$.

 \begin{figure}[h]
 \begin{center}
    \includegraphics[width=12cm]{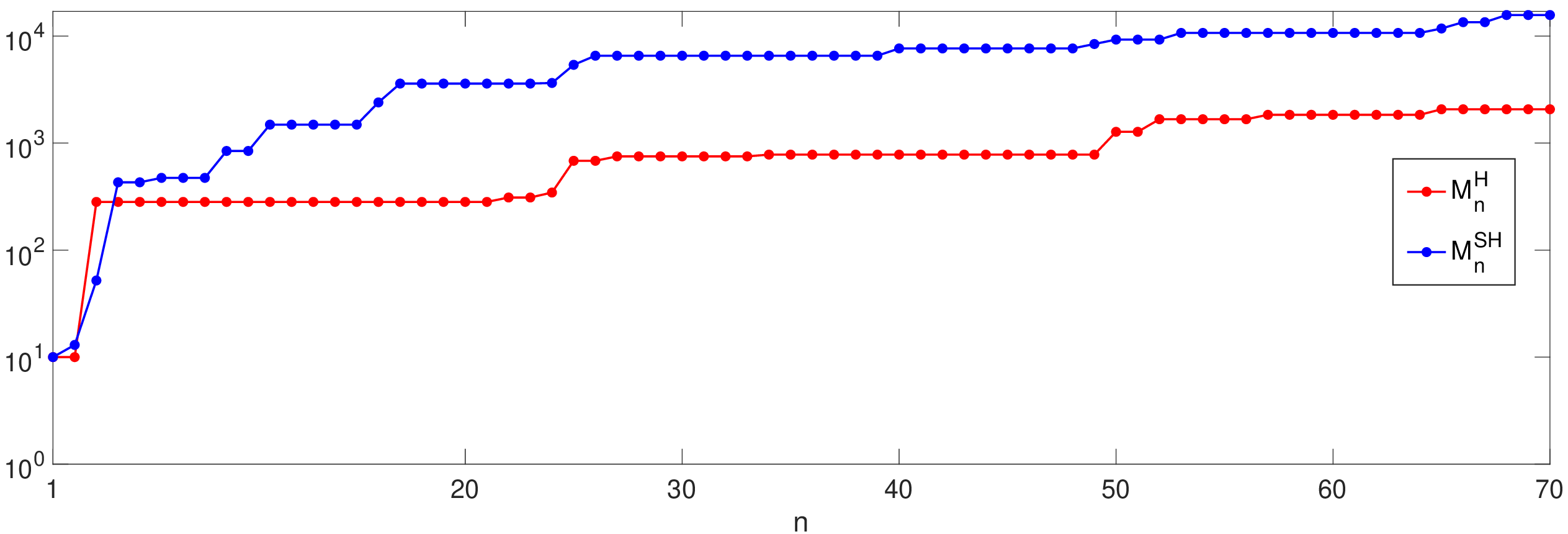}
    \caption
    {Evolution of $M_n$ as a function of $n$ for the HMC and SHMC-greedy algorithms.} \label{MnSHMnHfex}
\end{center}
    \end{figure}

    \begin{figure}[H]
\centering
   \includegraphics*[width = 12cm]{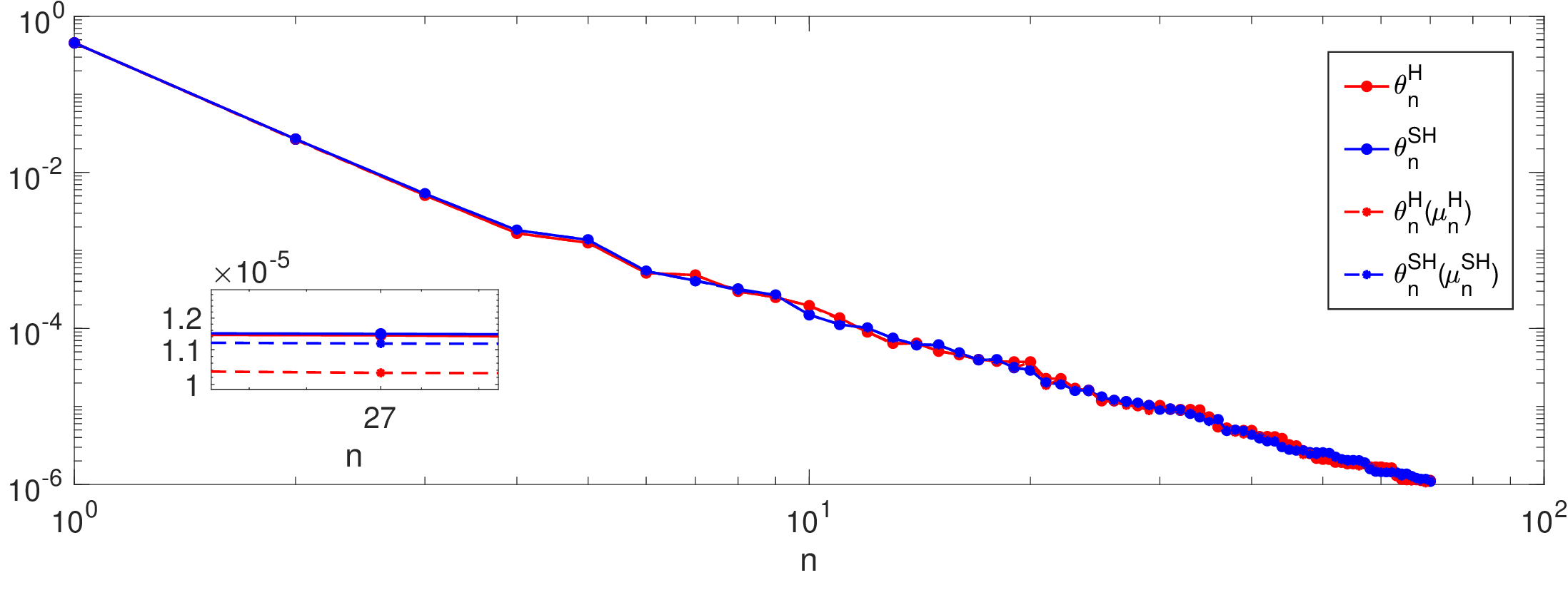}
     \caption{Evolution of $\theta^H_n(\mu_n^H)$, $\theta^{SH}_n(\mu_n^{SH})$, $\theta^H_n$, $\theta^{SH}_n$ as a function of $n$ in test case 2. }\label{offline3Fex2}
\end{figure}

\begin{figure}[H]
\centering
   \includegraphics*[width =12cm]{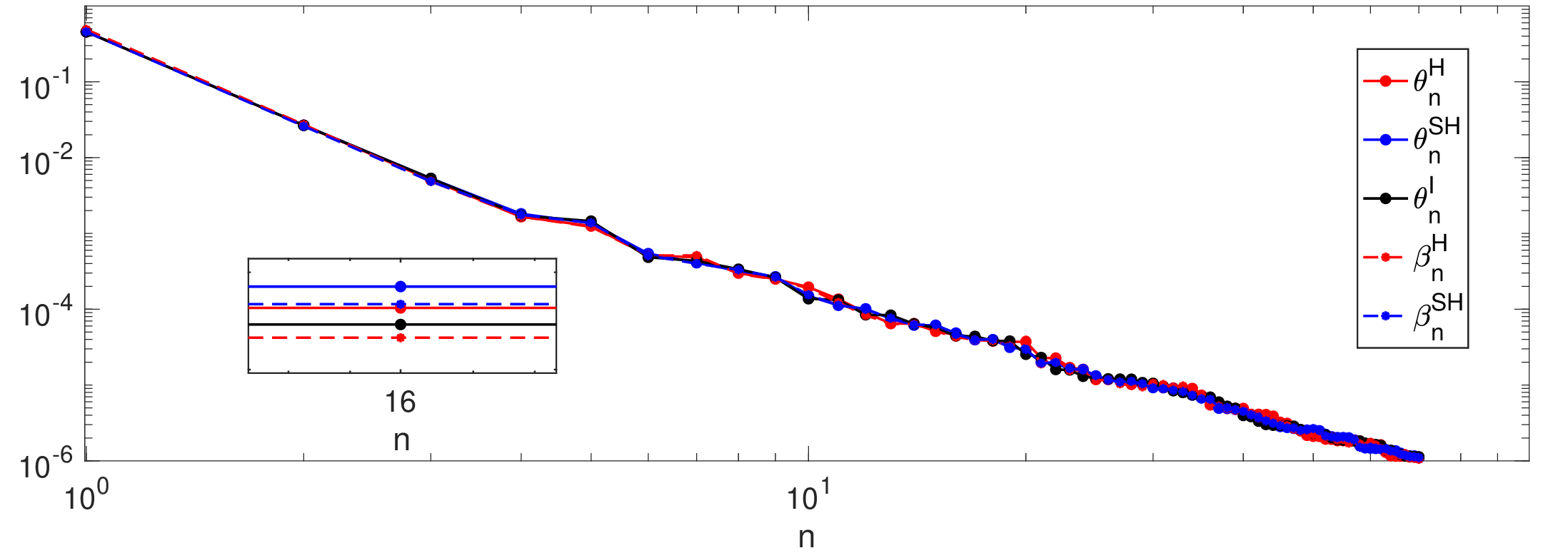}
     \caption{Evolution of $\beta^H_n$, $\beta^{SH}_n$, $\theta^H_n$, $\theta^{SH}_n$, $\theta^I_n$ as a function of $n$ in test case 2.}\label{Offline1Fex2}
\end{figure}

\begin{figure}[H]
\centering
   \includegraphics[width=12cm]{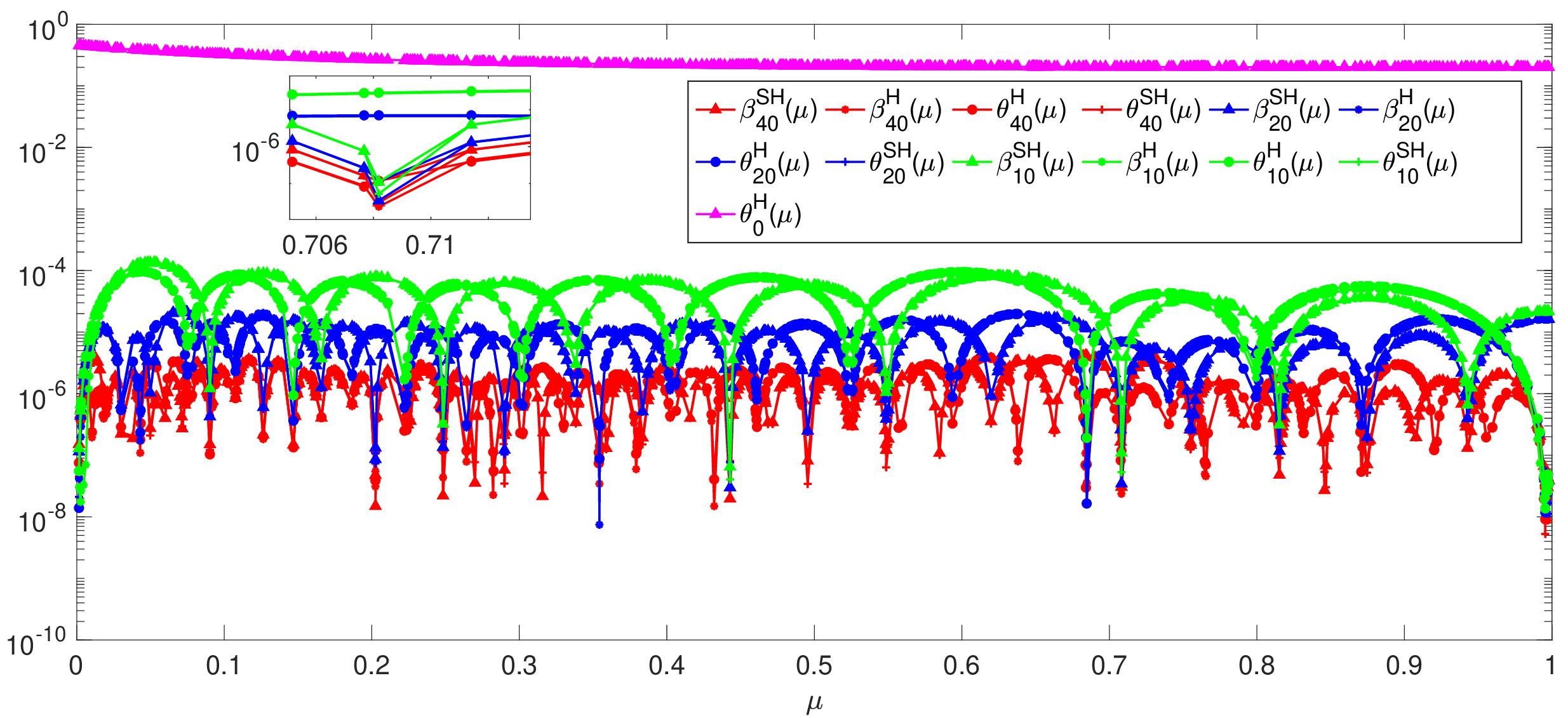}
     \caption{$\theta^H_n(\mu)$, $\theta_n^{SH}(\mu)$, $\beta_n^H(\mu)$, $\beta_n^{SH}(\mu)$ as a function of $\mu$ for $N=0,10,20,40$ on $\cP_{test}=[0,1]$.}\label{Online1Fex2}
\end{figure}

\begin{figure}[H]
\centering
   \includegraphics[width=12cm]{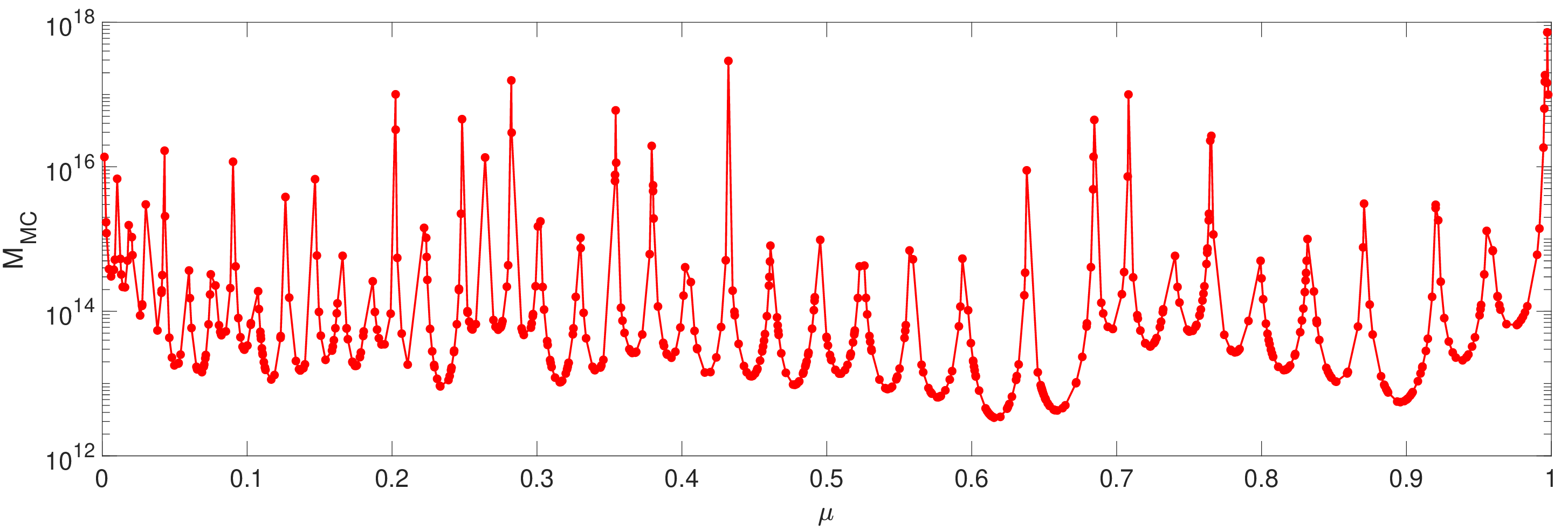}
     \caption{$M_{MC}(\mu)$ as a function of $\mu\in \cP_{test}=[0,1]$.}\label{MrefGainfex2}
\end{figure}
\medskip

% \begin{figure}[H]
% \centering
%    \includegraphics[scale=0.35]{RmrefPtestNew3.eps}
%      \caption{$R_n^H(\mu)$, $R_n^{SH}(\mu)$ and $R_n^I(\mu)$ as a function of $\mu$ for $n=5$ and $n=8$.}\label{ErrEML}
% \end{figure}
% 

\begin{figure}[H]
\centering
   \includegraphics[width=12cm]{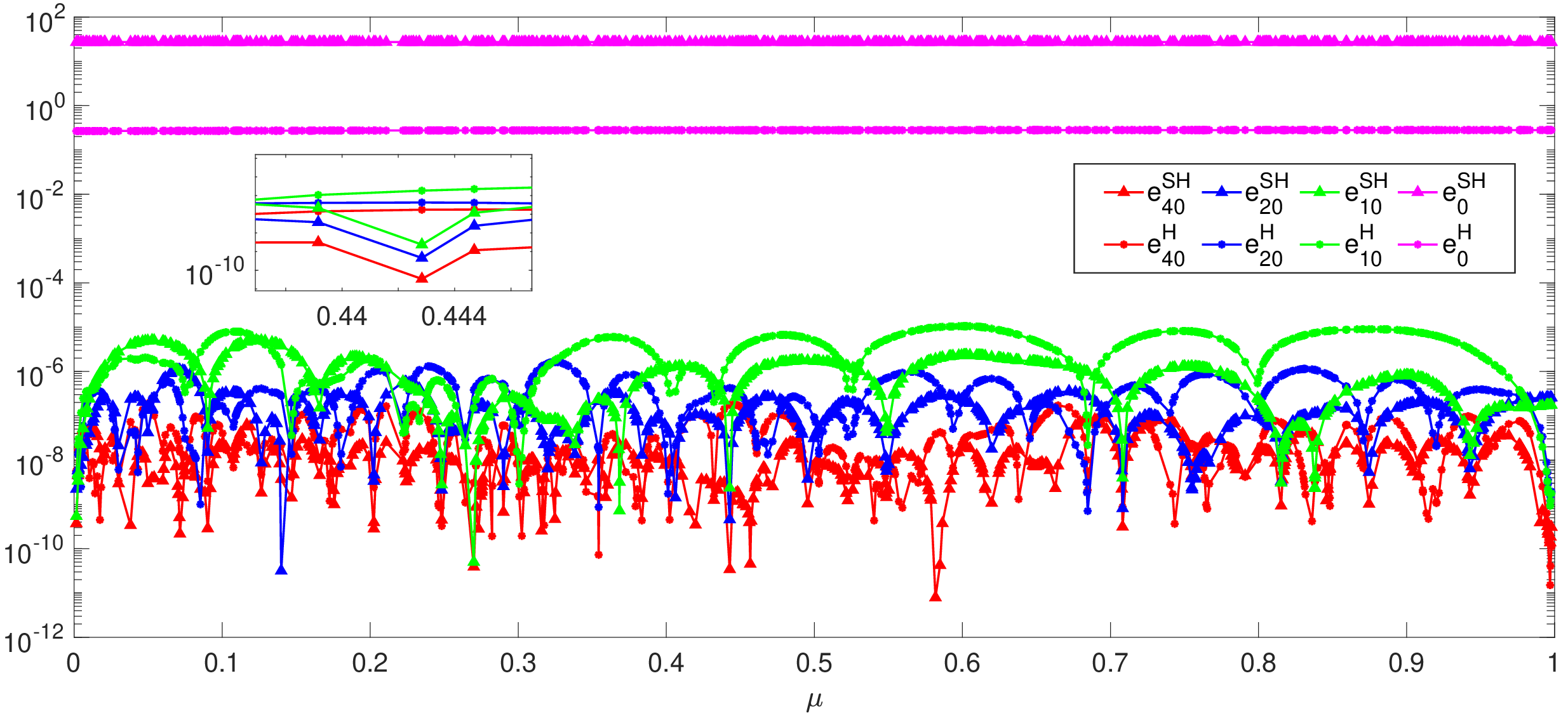}
     \caption{$e_N^H(\mu)$ and $e_N^{SH}(\mu)$ as a function of $\mu$ for $n=0,10,20,40$ on $\cP_{test}=[0,1]$.}\label{Online2Fex2}
\end{figure}

\subsection{Two-dimensional heat equation}

Let $Z_1$ and $Z_2$ be two independent real-valued random varibales with probability laws respectively $\mathcal{U}(0.5,2)$ and $\mathcal{N}(0,1))$ and let $Z =(Z_1, Z_2)$.
Let $\mathcal{D} = (0,2)^2$, $\mathcal P:=[0,10]$. The trial set $\mathcal P_{\rm trial}$ is constructed by selecting $50$ random values uniformly distributed in $\mathcal P$.

For all $\mu\in \mathcal P$ and $z:=(z_1,z_2)\in (0,5, 2) \times \mathbb{R}$, we introduce 
$$
D^{\mu,z}: \left\{ \begin{array}{ccc}
                \mathcal D  & \to & \mathbb{R}^{2\times 2}\\
                (x,y) & \mapsto & \begin{bmatrix}
 D^{\mu,z}_{11}(x,y) & 0 \\
0 & D^{\mu,z}_{22}(x,y)
\end{bmatrix} \\
\end{array}
\right.
$$
where 
$$
\forall (x,y)\in \mathcal D, \quad D_{11}^{\mu,z}(x,y) = 13+\mu \sin(2\pi x/z_1)+0.5 z_2 \quad \mbox{ and } D_{22}^{\mu,z}(x,y) = 13+\mu \sin(2\pi y/z_1)+0.5 z_2. 
$$

We introduce a conform triangular mesh $\mathcal T$ of the domain $\mathcal D$ as represented on the left-hand side plot of Figure~\ref{FigPDE} and denote by 
$$
V_h:= \left\{ u \in \mathcal{C}\left( \mathcal D \right), \quad u|_T \in \mathbb{P}_1 \; \forall T \in \mathcal T, \quad u|_{\partial \mathcal D} = 0 \right\}, 
$$
the standard $\mathbb{P}_1$ finite element space associated to this mesh.

For $\mu \in \cP$ and $z\in (0.5, 2)\times \mathbb{R}$, we define $u_h^{\mu, z}\in V_h$ the unique solution to
\begin{equation}\label{approx}
a_{\mu, z}\left( u_h^{\mu, z}, v\right) = b(v), \quad \forall v\in V_h,
\end{equation}
where 
$$
\forall v,w \in H^1_0(\mathcal D), \quad a_{\mu, z}  = \int_{\mathcal D} \nabla v \cdot D^{\mu, z} \nabla w, \quad b(v) = \int_{\mathcal D} r v,
$$
and where $r\in L^2(\mathcal D)$ is defined by
$$
r(x,y) = \exp{(-(x-1)^2-(y-1)^2)}, \quad \forall (x,y)\in \mathcal D.
$$
The function $u_h^{\mu, z}$ is thus the standard $\mathbb{P}_1$ finite element approximation of the unique solution $u^{\mu, z}\in H^1_0(\mathcal D)$ to 
\begin{equation}\label{PDE}
 \left\{
 \begin{array}{ll}
  - {\rm div}\left( D^{\mu,z} \nabla u^{\mu,z}\right) = r, & \mbox{ in } \mathcal D,\\
  u^{\mu,z} = 0 & \mbox{ on } \partial \mathcal D.\\
 \end{array}
\right.
\end{equation}

Let $T_1\in \mathcal T$ be the triangle colored in red in the left-hand side plot of Figure~\ref{FigPDE}. For all $\mu \in \mathcal P$ and $z\in (0,5,2) \times \mathbb{R}$, we define by
$$
f_\mu(z):= \frac{1}{|T_1|}\int_{T_1}u^{\mu,z}_h.
$$

\medskip

 \begin{figure}[h]
   \begin{minipage}{0.3\textwidth}
    \includegraphics[scale=0.3]{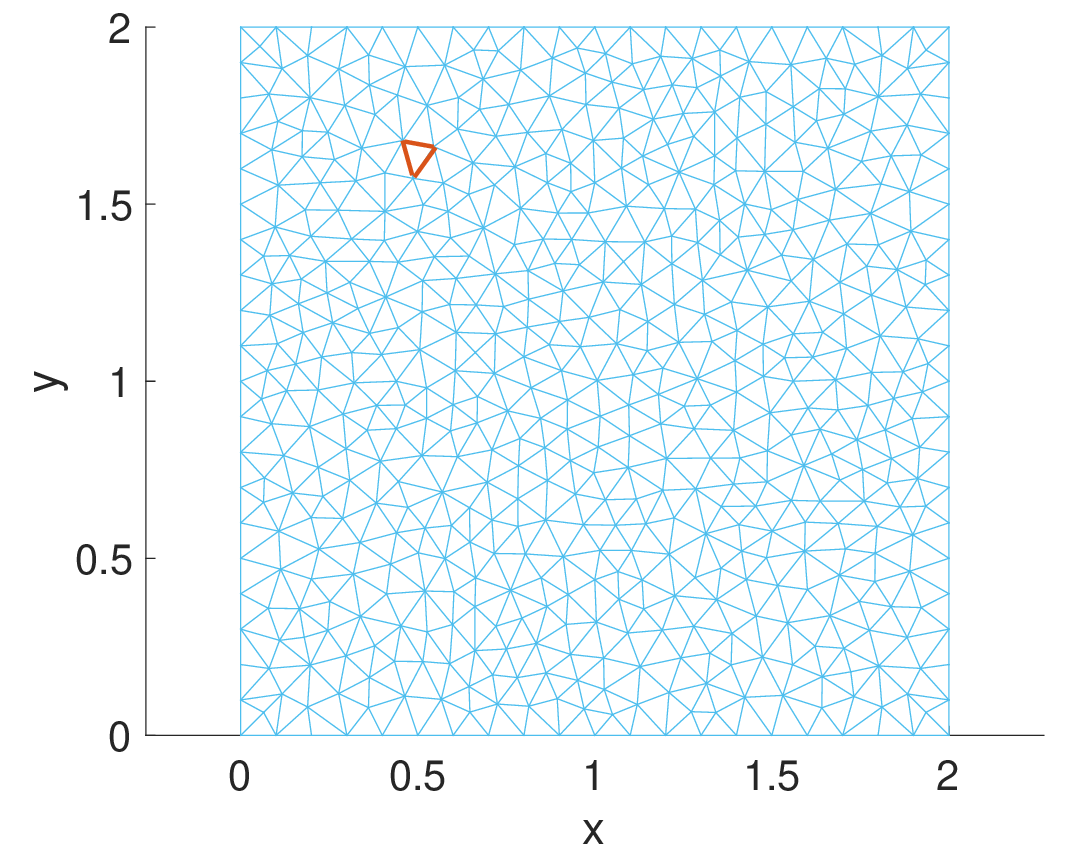}
 \end{minipage}\hspace{5mm}
 \begin{minipage}{.3\textwidth}
     \includegraphics[scale=0.3]{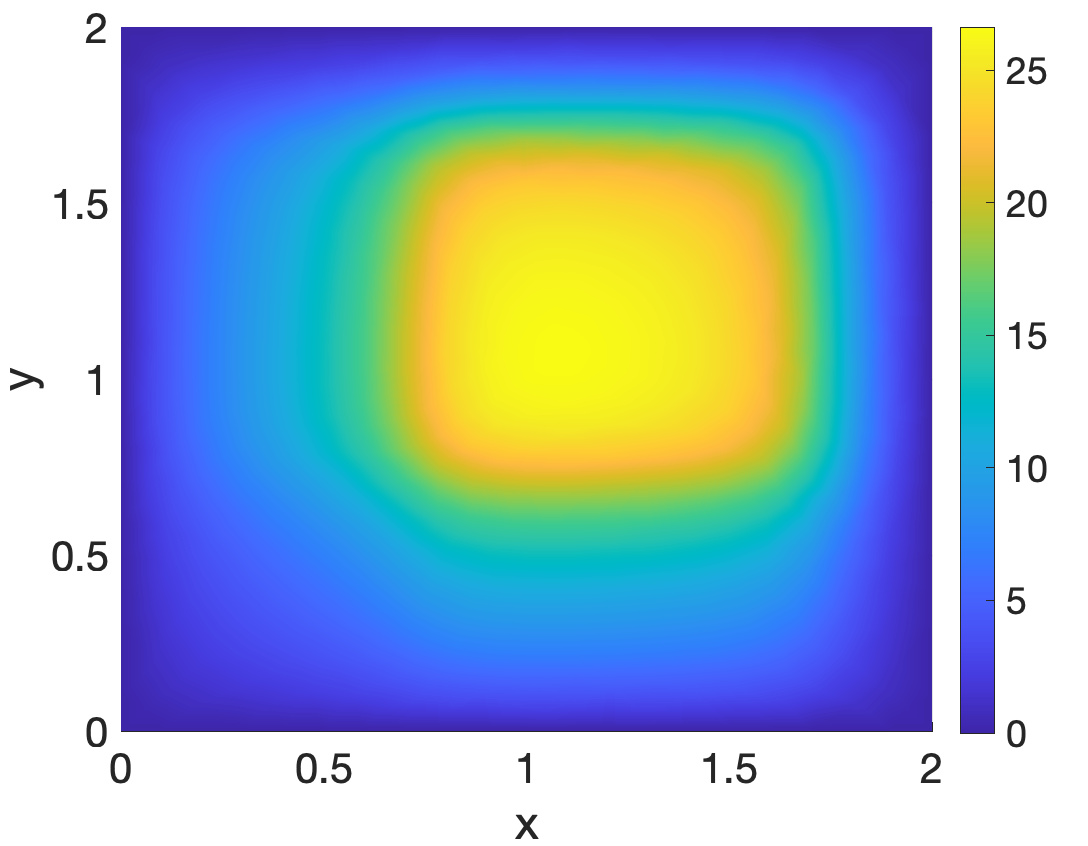}
    \end{minipage}\hspace{5mm}
    \begin{minipage}{0.3\textwidth}
    \includegraphics[scale=0.3]{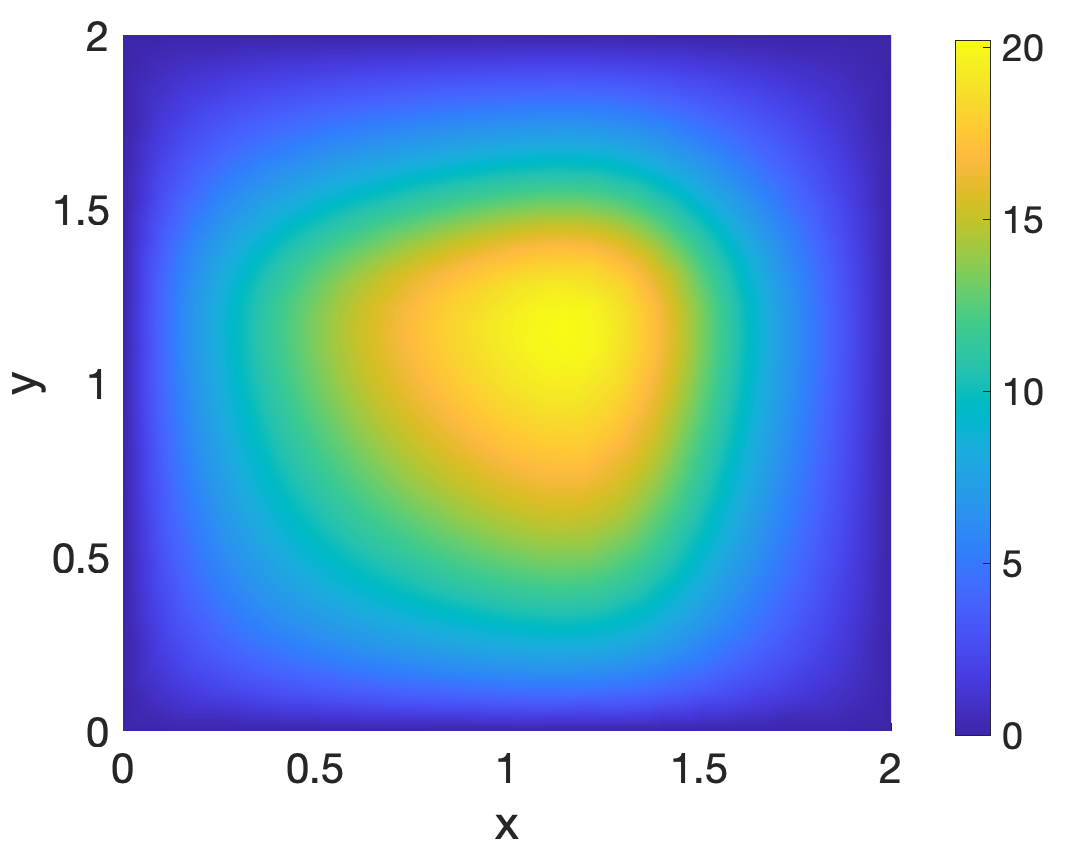}
 \end{minipage}
    \caption
    {Left: mesh $\mathcal T$ (the triangle $T_1$ is highlighted in red color); Center: $u^{\mu, z}_h$ for $\mu=9$ and $z=(1,0)$; Right: $u^{\mu, z}_h$ for $\mu = 9$ and $z = (1.777,0.2062)$.} \label{FigPDE}
\end{figure}

\medskip

In this example, $M_{\rm ref} = 10^5$, $M_1 = 800$ and $\gamma = 0.9$. Figure~\ref{fig:figM_Dir} illustrates the evolution of the values of $M_n$ as a function of $n$ for the HMC and SHMC algorithms. 

 \begin{figure}[h]
 \begin{center}
    \includegraphics[width=12cm]{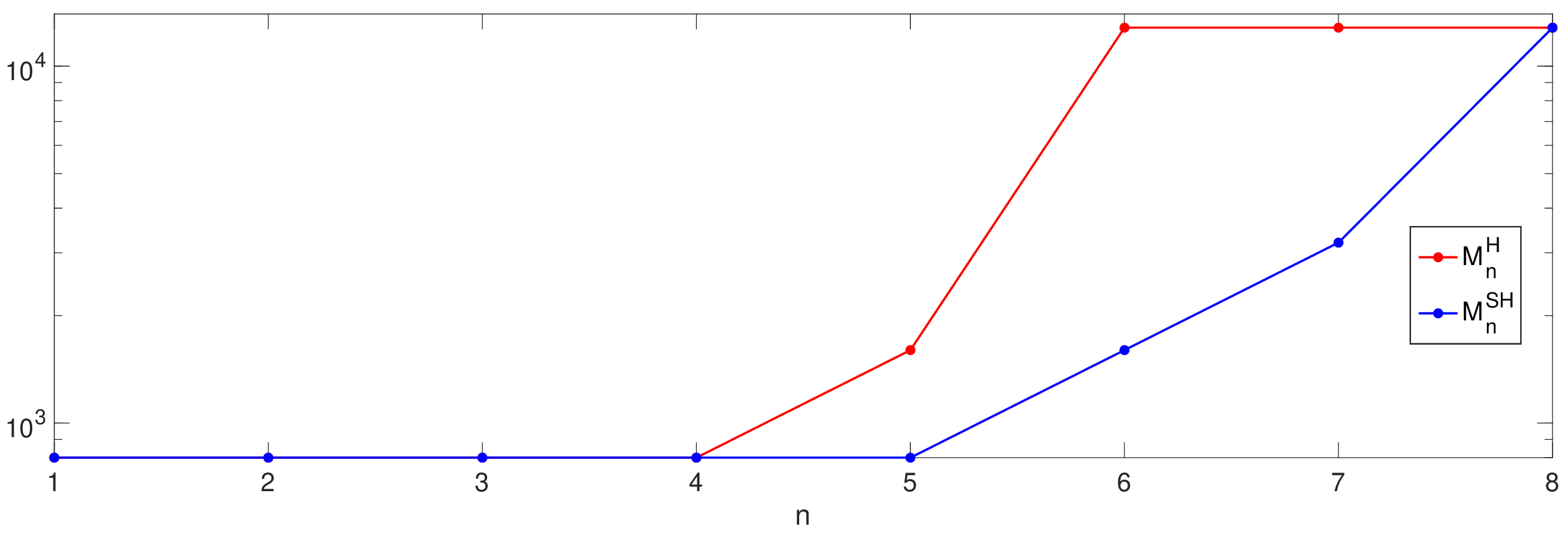}
    \caption
    {Evolution of $M_n$ as a function of $n$ for the HMC and SHMC-greedy algorithms in test case 3.} \label{fig:figM_Dir}
\end{center}
    \end{figure}

\medskip

\normalfont

It is to be noted here that the quantities $\theta^H_n$, $\theta^{SH}_n$ and $\theta_n^I$ are very close: the quality of approximation of the reduced spaces $V_n^H$ or $V_n^{SH}$ is very close to the quality of approximation of 
the reduced space $V_n^I$ given by an ideal greedy algorithm.

\medskip

\begin{figure}[H]
\centering
   \includegraphics*[width = 12cm]{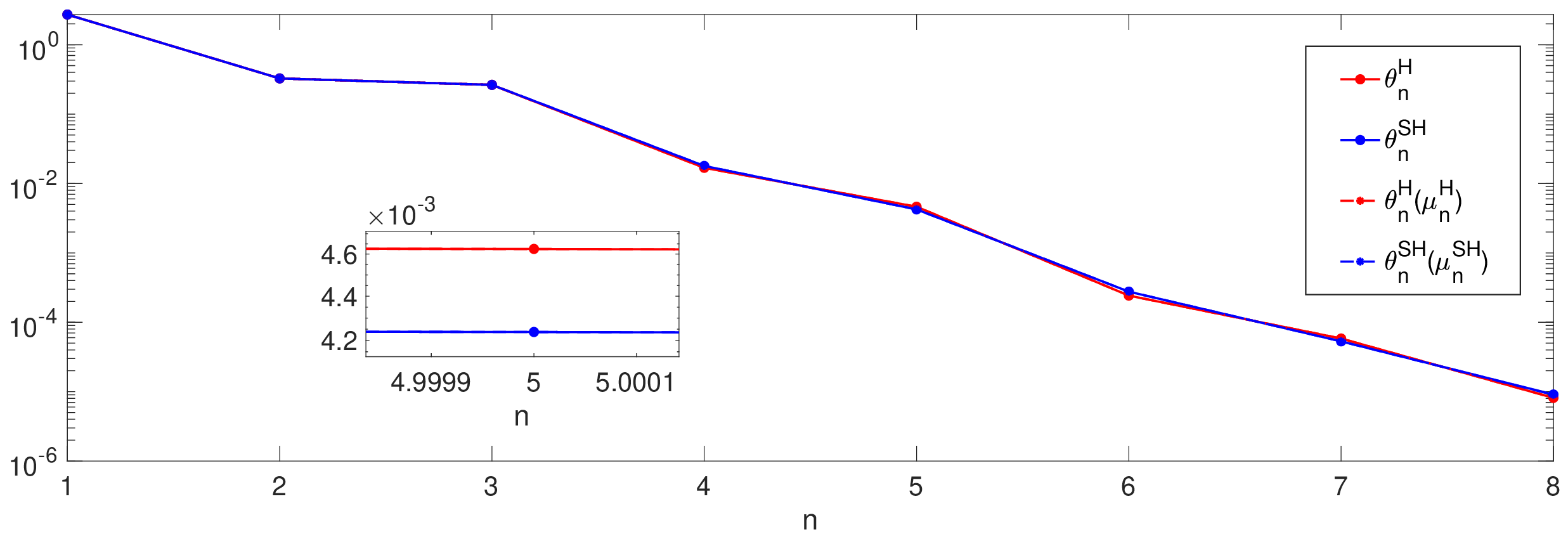}
     \caption{Evolution of $\theta^H_n(\mu_n^H)$, $\theta^{SH}_n(\mu_n^{SH})$, $\theta^H_n$, $\theta^{SH}_n$as a function of $n$ in test case 3.}\label{ErrMaxD1}
\end{figure}

\begin{figure}[H]
\centering
   \includegraphics*[width =12cm]{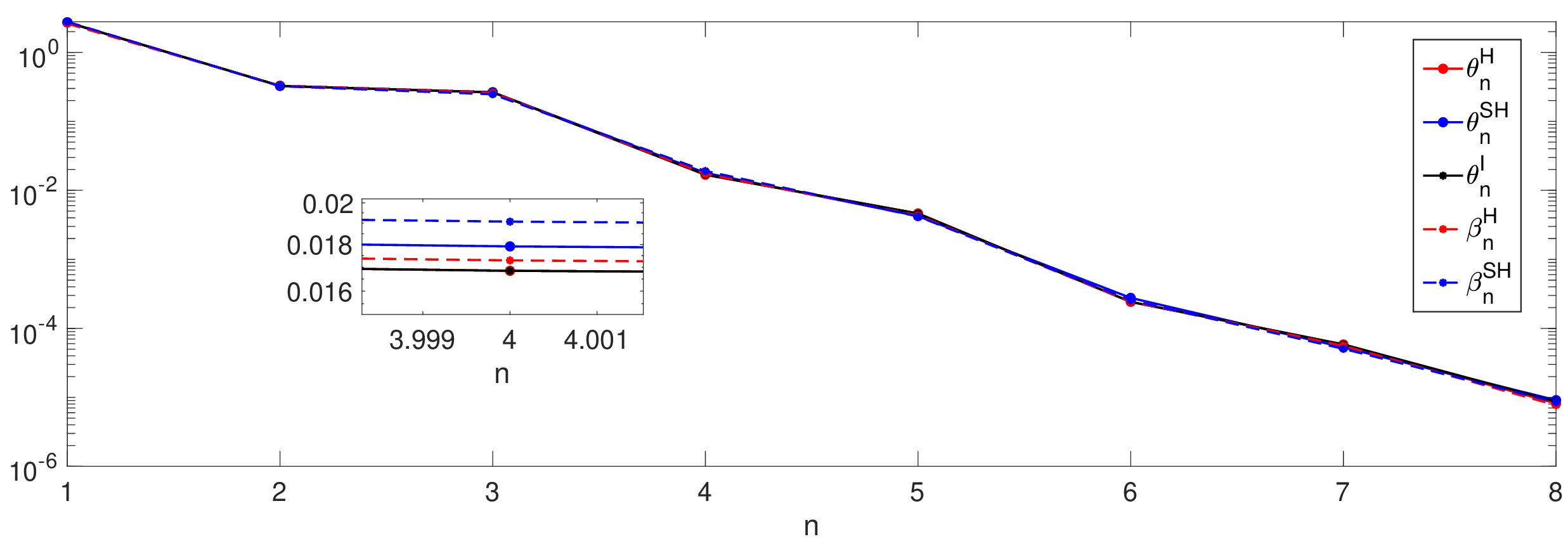}
     \caption{Evolution of $\beta^H_n$, $\beta^{SH}_n$, $\theta^H_n$, $\theta^{SH}_n$, $\theta^I_n$ as a function of $n$ in test case 3.}\label{ErrMaxD2}
\end{figure}

\begin{figure}[H]
\centering
   \includegraphics[width=12cm]{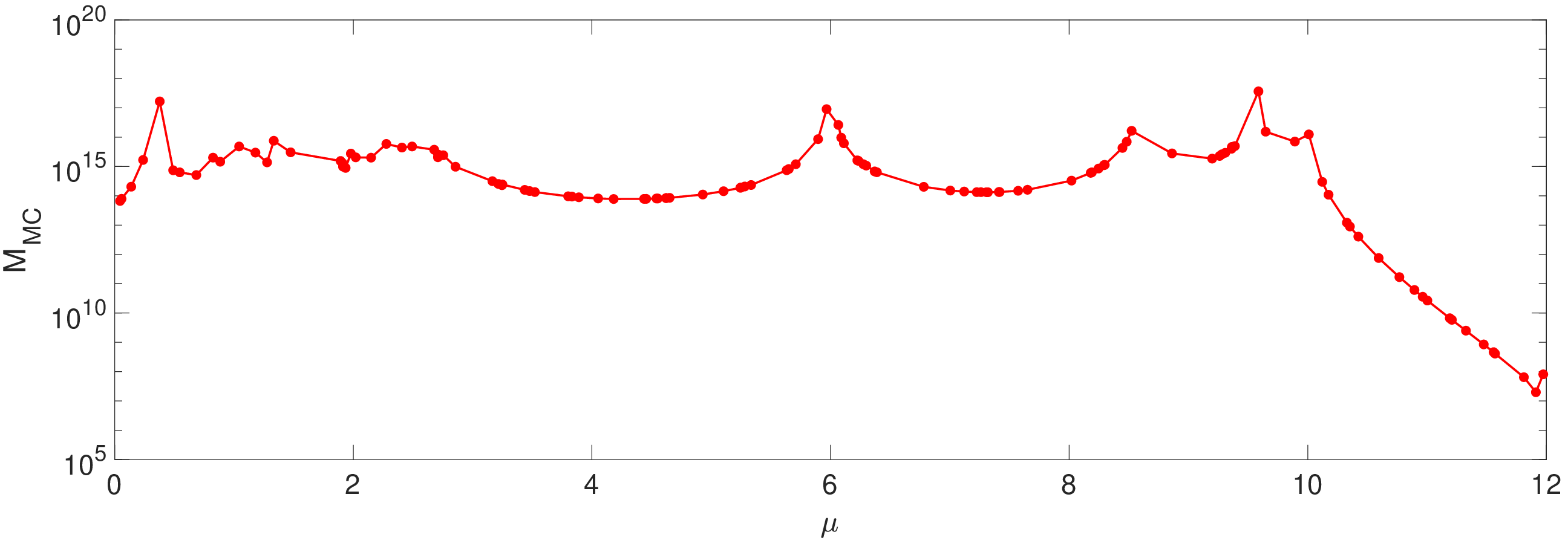}
     \caption{$M_{MC}(\mu)$ as a function of $\mu\in\cP_{test}=[0,12]$.}\label{ErrMnIn0}
\end{figure}

Figure~\ref{ErrMnIn0} shows the value of $M_{MC}(\mu)$ given by (\ref{eq:MMC}), knowing that $M_n = 12800$ after $n=7$ iterations of the HMC algorithm. 
We observe that in this case $10^{14}\leq M_{MC}(\mu)\leq 10^{20}$, which shows the huge computational gain brought by the HMC algorithm with respect to a standard Monte-Carlo method in this test case. 

\medskip

\begin{figure}[H]
\centering
   \includegraphics[width=12cm]{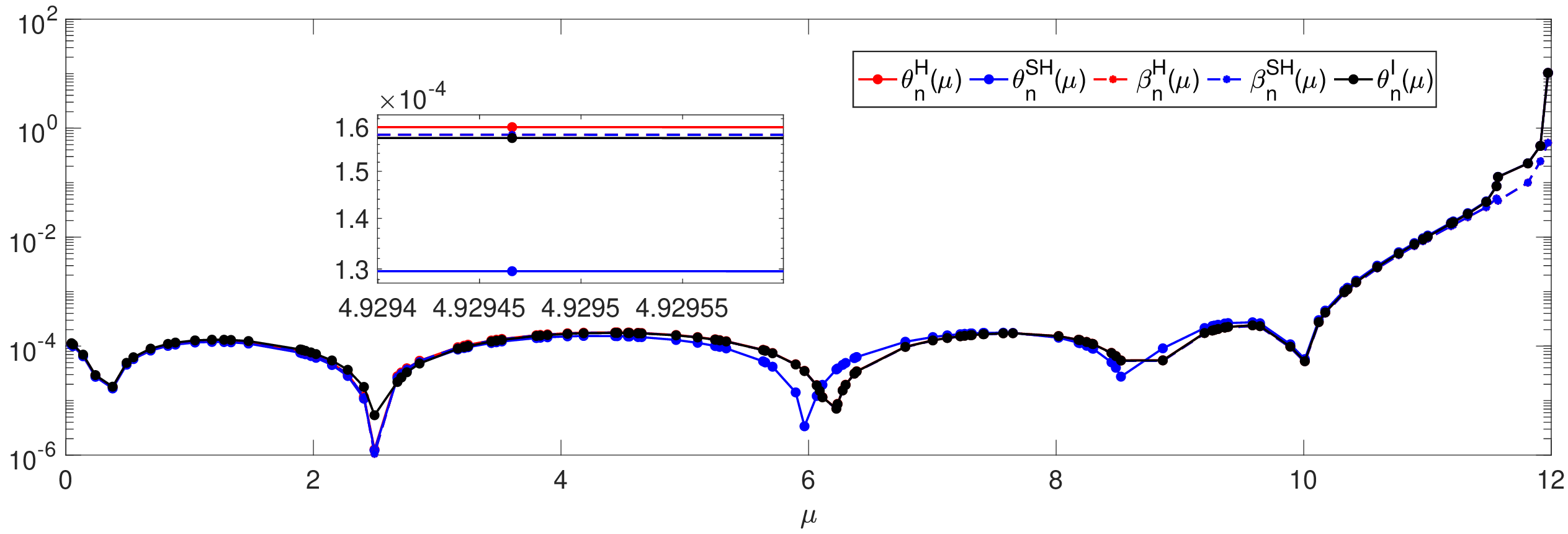}
     \caption{$\theta^H_n(\mu)$, $\theta_n^{SH}(\mu)$, $\theta_n^I(\mu)$, $\beta_n^H(\mu)$, $\beta_n^{SH}(\mu)$ as a function of $\mu$ for $n=5$ in test case 3.}\label{ErrMnIn1}
\end{figure}

\medskip

% \begin{figure}[H]
% \centering
%    \includegraphics[scale=0.35]{RmrefPtestNew3.eps}
%      \caption{$R_n^H(\mu)$, $R_n^{SH}(\mu)$ and $R_n^I(\mu)$ as a function of $\mu$ for $n=5$ and $n=8$.}\label{ErrEML}
% \end{figure}
% 

\begin{figure}[H]
\centering
   \includegraphics[width=12cm]{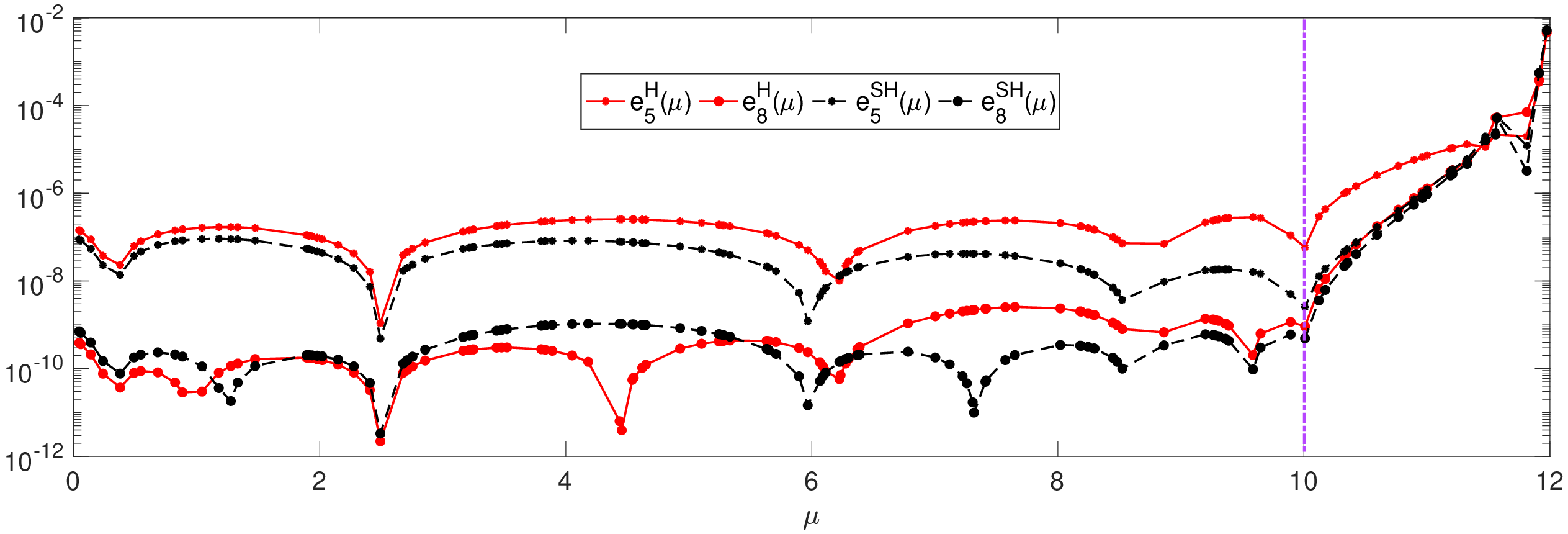}
     \caption{$e_n^H(\mu)$ and $e_n^{SH}(\mu)$ as a function of $\mu$ for $n=5$ and $n=8$ in test case 3.}\label{ErrEML}
\end{figure}

\subsection*{Acknowledgements}

The authors acknowledges financial support from the Mohammed VI Polytechnic University for funding Raed Blel's PhD thesis. Virginie Ehrlacher acknowledges support from project COMODO (ANR-19-CE46-0002). This work is partially supported by a funding from the European
ResearchCouncil (ERC) under the European Union’s Horizon 2020 research
and innovation programme (grant agreement No 810367), project EMC2.

\bibliographystyle{plain}
\bibliography{bibliography}

\end{document}